\numberwithin{equation}{section}
\theoremstyle{plain}
\newtheorem{theorem}{Theorem}[section]
\newtheorem{proposition}[theorem]{Proposition}
\newtheorem{lemma}[theorem]{Lemma}
\newtheorem{corollary}[theorem]{Corollary}
\theoremstyle{definition}
\newtheorem{definition}[theorem]{Definition}
\newtheorem{example}[theorem]{Example}
\newtheorem{algorithm}[theorem]{Algorithm}
\theoremstyle{remark}
\newtheorem{remark}[theorem]{Remark}
\begin{document}

% \begin{frontmatter}

% "Title of the paper"
\title{$\Lambda$-linked coupling for drifting Brownian motions}

% indicate corresponding author with \corref{}
% \author{\fnms{John} \snm{Smith}\corref{}\ead[label=e1]{smith@foo.com}\thanksref{t1}}
% \thankstext{t1}{Thanks to somebody} 
% \address{line 1\\ line 2\\ printead{e1}}
% \affiliation{Some University}

% \author{\fnms{Motoya} \snm{Machida}\corref{}
%         \ead[label=e1]{mmachida@tntech.edu}}

\author{Motoya Machida}

% \affiliation{Tennessee Technological University}

\address{Department of Mathematics\\
Tennessee Technological University\\
Cookeville, Tennessee 38505\\
USA}
% \\ \printead{e1}}

% \runtitle{$\Lambda$-linked coupling for diffusions}
% \runauthor{M. Machida}

\begin{abstract}
We raise a question on whether a dynamical system driven by Markov
process is Markovian, for which we are able to propose a criterion
and examples of positive case.
This investigation leads us to develop
(i)
a general construction of intertwining dual via Liggett duality,
and (ii)
a realization of $\Lambda$-linked coupling in a form of
dynamical system.
We show this construction of intertwining dual
and $\Lambda$-linked coupling
for an $n$-dimensional drifting Brownian motion
when it is a characteristic diffusion.
In particular, it includes an extension of Pitman's $2M-W$ theorem
by Rogers and Pitman as a special case.

% AMS 2010 subject classifications:
% 60J60  Diffusion processes
% 60J25  Continuous-time Markov processes on general state spaces
\end{abstract}

% \begin{keyword}[class=MSC]
% \kwd[Primary ]{60J25}
% \kwd[; secondary ]{60J60}
% \end{keyword}

% \begin{keyword}
% \kwd{Intertwining duality}
% \kwd{diffusion processes}
% \end{keyword}

% \end{frontmatter}
\maketitle

% AOS,AOAS: If there are supplements please fill:
%\begin{supplement}[id=suppA]
%  \sname{Supplement A}
%  \stitle{Title}
%  \slink[doi]{10.1214/00-AOASXXXXSUPP}
%  \sdatatype{.pdf}" 
%  \sdescription{Some text}
%\end{supplement}

\section{Introduction}
\label{intro}

This study was inspired by the recent development
for intertwining duals by
Fill and Lyzinski~\cite{FL} and Miclo~\cite{miclo}.
In this section we consider a diffusion operator
\begin{equation*}
  \mathcal{A}f
  = -\mu\frac{df}{dx}
  + \frac{1}{2}\frac{d^2f}{dx^2}
\end{equation*}
of one-dimensional Brownian motion
with constant drift $(-\mu)$,
and illustrate the connection between Liggett and intertwining
duality (Section~\ref{intro.ligg} and~\ref{intro.intwin}),
our notion of flow by Skorohod equations
(Section~\ref{intro.skd}),
and our construction of $\Lambda$-linked coupling
(Section~\ref{pitman.sec}).
In particular, we demonstrate how the realization of
$\Lambda$-linked coupling can be related back to
the work of Rogers and Pitman~\cite{PR}.

\subsection{Liggett dual}
\label{intro.ligg}

A process without drift term
is a Brownian motion, and denoted by $W(t)$.
Then a sample path
$\hat{X}(s)$ of drifting Brownian motion is constructed by
\begin{equation}\label{mu.drift}
  \hat{X}(s) = x - \mu s + W(s),
  \quad s \ge 0,
\end{equation}
starting from an initial state $\hat{X}(0)=x$;
in this paper
we use a caret-shaped symbol $\hat{X}(s)$
or a process $W(s)$ with time $s$
when we view them
as processes moving backward in time.
We set
\begin{equation}\label{bm.d}
  D^* = \{(z,y)\in\mathbb{R}^2: z<y\}
\end{equation}
as a state space dual to $D = \mathbb{R}$,
and construct a $D^*$-valued process
$X^*(t) = (Z(t),Y(t))$ by
\begin{equation}\label{bm.yz}
  \begin{cases}
    Y(t) = y + \mu t - W(t); \\
    Z(t) = z + \mu t + W(t),
  \end{cases}
\end{equation}
starting from $X^*(0) = (z,y)\in D^*$
until the absorbing time
$\zeta = \inf\{t\ge 0: Z(t)=Y(t)\}$
(and $X^*(t)$ remains at a coffin state for
$t\ge\zeta$; see III.3 of~\cite{rw1}).

\begin{theorem}\label{bm.cons}
Let $T > 0$, $x\in D$, and $x^*=(z,y)\in D^*$ be fixed.
Then $\hat{X}$ and $X^*$ satisfy
\begin{math}
  \mathbf{E}[\Gamma(x^*,\hat{X}(T))]
  = \mathbf{E}[\Gamma(X^*(T),x)]
\end{math}
with respect to the duality function
\begin{equation}\label{bm.gamma}
  \Gamma((z,y),x) = \begin{cases}
    1 & \mbox{ if $z< x\le y$; } \\
    0 & \mbox{ otherwise, }
  \end{cases}
\end{equation}
where
$\Gamma(X^*(T),x) = 0$ if $T\ge\zeta$.
\end{theorem}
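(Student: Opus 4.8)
The plan is to reduce both sides to explicit Gaussian probabilities. The left-hand side is immediate: since $\hat X(T)=x-\mu T+W(T)$,
\[
  \mathbf{E}[\Gamma(x^*,\hat X(T))]
  =\mathbf{P}\bigl(z<x-\mu T+W(T)\le y\bigr)
  =\mathbf{P}(a<W(T)\le b),
\]
where $a:=z-x+\mu T$ and $b:=y-x+\mu T$; this is an interval of length $b-a=y-z$ for the $N(0,T)$ variable $W(T)$. For the right-hand side, the first observation is that $Z(t)-Y(t)=(z-y)+2W(t)$ carries no drift, so $\zeta$ is the first hitting time of the level $c:=(y-z)/2>0$ by $W$ and $\{T<\zeta\}=\{M_T<c\}$ with $M_T:=\sup_{0\le t\le T}W(t)$. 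On that event $\Gamma(X^*(T),x)=1$ precisely when $Z(T)<x\le Y(T)$, i.e.\ when $W(T)<\beta$ and $W(T)\le\alpha$ with $\beta:=x-z-\mu T$ and $\alpha:=y+\mu T-x$; hence
\[
  \mathbf{E}[\Gamma(X^*(T),x)]=\mathbf{P}\bigl(W(T)\le m,\ M_T<c\bigr),
  \qquad m:=\min(\alpha,\beta),
\]
the strict/non-strict distinction being irrelevant since $W(T)$ has a density.

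The point that makes the two sides coincide is the algebraic identity $\alpha+\beta=(y+\mu T-x)+(x-z-\mu T)=y-z=2c$: the bounds $\alpha,\beta$ are mirror images about $c$, so $m\le c$. With $m\le c$ the reflection principle gives $\mathbf{P}(W(T)\le m,\ M_T\ge c)=\mathbf{P}(W(T)\ge 2c-m)$, so that
\[
  \mathbf{E}[\Gamma(X^*(T),x)]=\mathbf{P}(W(T)\le m)-\mathbf{P}(W(T)\ge 2c-m)
  =\mathbf{P}(m-2c<W(T)\le m),
\]
the last step using $-W(T)\stackrel{d}{=}W(T)$. This is again an interval of length $2c=y-z$ for $W(T)$, with midpoint $m-c=\min(\alpha-c,\beta-c)$; since $\alpha-c=(y+z)/2-x+\mu T=(a+b)/2$ and $\beta-c=-(a+b)/2$, that midpoint equals $-\lvert(a+b)/2\rvert$, i.e.\ $\pm$ the midpoint of the left-hand interval $(a,b]$. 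As $W(T)\sim N(0,T)$ is symmetric about $0$, an interval and its reflection about $0$ carry equal probability, so the two sides agree.

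I expect the only genuine work to be in the second step: invoking the reflection principle in the form $\mathbf{P}(W(T)\le m,\ M_T\ge c)=\mathbf{P}(W(T)\ge 2c-m)$ for $m\le c$ — routine, but it forces one to verify $m\le c$, which is exactly where $\alpha+\beta=2c$ is used — and tracking which of $\alpha,\beta$ is the active bound, though the symmetry of the centered Gaussian collapses the two cases into one. A more structural alternative is to check Liggett's generator criterion $\mathcal{A}_x\Gamma((z,y),x)=\mathcal{A}^*_{(z,y)}\Gamma((z,y),x)$, where $\mathcal{A}^*$ is the generator of $X^*=(Z,Y)$ on $D^*$ (a planar diffusion with unit variances, covariance $-1$, common drift $\mu$, absorbed on $\{z=y\}$), and then appeal to the duality theorem of that framework; but because $\Gamma$ is an indicator this identity has to be read distributionally, with the absorption at $\zeta$ and the boundary $\{z=y\}$ accounting for the Dirac masses, so for this isolated illustrative statement the reflection computation above is the most economical.
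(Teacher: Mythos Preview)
Your proof is correct and follows essentially the same route as the paper: identify $\{T<\zeta\}=\{M_T<(y-z)/2\}$, reduce the right-hand side to a joint probability for $(W(T),M_T)$, and evaluate via the reflection principle. The only cosmetic difference is that the paper splits into the two cases $x\lessgtr(z+y)/2+\mu T$ and treats one explicitly, whereas you unify them through $m=\min(\alpha,\beta)$ and a final Gaussian symmetry step.
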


\begin{proof}
Use a common Brownian motion $W(t)$,
and construct a sample path of $\hat{X}(s)$ and $X^*(t)$.
Set $M(T) = \max_{0\le v\le T}W(v)$,
and observe that $\zeta > T$ if and only if
$M(T) < \frac{y-z}{2}$.
The expectation
$\mathbf{E}[\Gamma(X^*(T),x)]$ can be expressed by
the probability
\begin{equation*}
  \begin{cases}
    \mathbb{P}\left(M(T)<\frac{y-z}{2},\, Y(T)\ge x\right)
    & \mbox{ if $x > \frac{z+y}{2}+\mu T$; } \\
    \mathbb{P}\left(M(T)<\frac{y-z}{2},\, Z(T) < x \right)
    & \mbox{ if $x \le\frac{z+y}{2}+\mu T$. }
  \end{cases}
\end{equation*}
Assuming that $x \le\frac{z+y}{2}+\mu T$,
we can apply the reflection principle of Brownian motion
(Corollary~I.13.3 of~\cite{rw1})
and reduce the above expression to
\begin{align*}
  & \mathbb{P}(W(T)< x-z-\mu T)
  - \mathbb{P}\left(M(T)\ge\frac{y-z}{2},\, W(T)<x-z-\mu T\right)
  \\ &
  = \mathbb{P}(W(T)> z-x+\mu T)
  - \mathbb{P}(W(T)> y-x+\mu T)
  \\ &
  = \mathbb{P}(z < \hat{X}(T) \le y)
  = \mathbf{E}[\Gamma(x^*,\hat{X}(T))] .
\end{align*}
The case for $x > \frac{z+y}{2}+\mu T$
is similarly completed.
\end{proof}

Liggett~\cite{liggett}
introduced the duality relation of Theorem~\ref{bm.cons},
notably including a notion of duality by Siegmund~\cite{siegmund},
and we call $X^*(t)$ of Theorem~\ref{bm.cons}
a \emph{Liggett dual} to $\hat{X}(s)$.

\subsection{Intertwining dual}
\label{intro.intwin}

The drifting Brownian motion~(\ref{mu.drift})
has the transition density function
$p(t,x,y) = (2\pi t)^{-1/2}\exp(-|y-x+\mu t|^2/2t)$,
and it is ``time-reversible''
with respect to an invariant function $\nu(x) = e^{-2\mu x}$
(i.e., \emph{$\nu$-symmetric}; see Section~\ref{diffusion}),
satisfying $\nu(x)p(t,x,y) = \nu(y)p(t,y,x)$.
In Section~\ref{diffusion} we briefly review a diffusion process
by means of differential operator, Markov semigroup, and stochastic
differential equation (SDE).
Unlike one-dimensional diffusions an invariant function
$\nu$ does not necessarily exist when a higher dimensional space is
considered.
In Definition~\ref{c.def}
we present a special case of characteristic diffusion
by which an $n$-dimensional drifting Brownian motion is designed to
achieve any invariant function of interest.

The Liggett dual~(\ref{bm.yz})
has the corresponding diffusion operator
\begin{align*}
  \mathcal{B}f
  & = \mu\left(
  \frac{\partial}{\partial y}
  + \frac{\partial}{\partial z}
  \right)f
  + \frac{1}{2}\left(
  \frac{\partial}{\partial y}
  - \frac{\partial}{\partial z}
  \right)^2\!\! f
\end{align*}
with boundary condition that $f(z,y)$ tends to zero as
$(z,y)$ approaches the boundary
$\partial = \{(y,y)\in\mathbb{R}^2:y\in\mathbb{R}\}$.
Then we find a harmonic function
\begin{math}
  h(z,y) = \int_z^y\nu(x)dx
\end{math}
on $D^*$,
for which $\mathcal{B}h = 0$ holds.
This enables us to construct an operator $\mathcal{B}^*$
by the Doob $h$-transform
\begin{align*}
  \mathcal{B}^*f
  & = \frac{1}{h}\mathcal{B}[hf]
  \\
  & = \left[
    \mu + 2\mu\coth(\mu(y-z))
    \right]
  \frac{\partial f}{\partial y}
  + \left[
    \mu - 2\mu\coth(\mu(y-z))
    \right]
  \frac{\partial f}{\partial z}
  \\
  & \hspace{0.55in}
  + \frac{1}{2}\left(
  \frac{\partial}{\partial y}
  - \frac{\partial}{\partial z}
  \right)^2\!\! f
\end{align*}

We define a Markov kernel density $\lambda$
from $D^*$ to $D$ by
\begin{equation*}
  \lambda((z,y),x) = \frac{1}{h(z,y)}\Gamma((z,y),x)\nu(x)
\end{equation*}
and introduce the corresponding Markov kernel $\Lambda$ by
\begin{equation*}
  \Lambda f(z,y) = \frac{1}{h(z,y)}\int_z^y \nu(x)f(x)dx
\end{equation*}
for any bounded measurable function $f$ on $D$.
Then $\mathcal{B}^*$ satisfies
\begin{math}
  \Lambda\mathcal{A}f = \mathcal{B}^*\Lambda f ,
\end{math}
and it is called an \emph{intertwining dual} to $\mathcal{A}$.
In a setting of Markov chains
Diaconis and Fill~\cite{DF} observed that an intertwining dual
can be viewed as a Doob $h$-transform of the Siegmund dual of
the time-reversed Markov chain,
and Fill and Lyzinski~\cite{FL} demonstrated
the analogous result for diffusions on $[0,1]$.
The above construction of intertwining dual
coincides with the one obtained by Miclo~\cite{miclo}.

In general the intertwining duality can be introduced between two
Markov semigroups $P_t$ and $Q^*_t$, namely by
$\Lambda P_t = Q^*_t\Lambda$.
In Chapter~\ref{duality} we present intertwining duality
in terms of Markov semigroups, and discuss
a general construction of intertwining dual.
Once a Liggett dual $Q_t$ is constructed, the Doob $h$-transform
$Q^*_t$ is an intertwining dual to $P_t$;
see Proposition~\ref{twin.prop}.

\subsection{Skorohod equations and flow}
\label{intro.skd}

Let $y\in\mathbb{R}$ and $T > 0$ be fixed.
Provided a sample path $X = (X(t))_{0\le t\le T}$,
we can impute a Brownian motion $\omega(t)$ by
\begin{equation*}
  \omega(t) = X(t) - X(0) - \mu t,
  \quad 0\le t\le T,
\end{equation*}
as if $X(t)$ were governed by
\begin{math}
  X(t) = X(0) + \mu t + \omega(t) .
\end{math}
If $y \ge X(0)$, we can set
a nondecreasing process
\begin{equation*}
  L(t) = -\min_{0\le v\le t}[y-X(0)-2\omega(v)]\wedge 0
\end{equation*}
starting from $L(0)=0$.
This process $L(t)$
is uniquely determined as a solution $Y(t)$ and $L(t)$
to the following equations of \emph{Skorohod type}
\begin{equation}
  \begin{cases}\label{skd.n1}
    Y(t) = y + \mu t - \omega(t) + L(t); \\ 
    L(t) = \displaystyle\int_0^t I_{\{X(v)=Y(v)\}}\,dL(v),
  \end{cases}
\end{equation}
which was first proposed by
Saisho and Tanemura~\cite{saisho}.
The solution $Y(t)$ becomes an upper bound for $X(t)$,
and maintains
\begin{math}
  Y(t)-X(t) = y-X(0)-2\omega(t)+L(t)
\end{math}
for $0\le t\le T$.
Then we can construct a \emph{flow}
\begin{equation}\label{omega.n1}
  \tilde{\Theta}_{y,T}(X) = \begin{cases}
    (-\omega(t))_{0\le t\le T} & \mbox{ if $X(0) > y$; } \\
    (\omega(t) - L(t))_{0\le t\le T} & \mbox{ if $y \ge X(0)$, }
  \end{cases}
\end{equation}
which maps from a sample path $(X(t))_{0\le t\le T}$ to a sample path
$\tilde{\Theta}_{y,T}(X)$ on the interval $[0,T]$.

\begin{proposition}\label{pitman.cons}
Construct $(\hat{X}(s))_{0\le s\le T}$ by~(\ref{mu.drift}) using
a Brownian motion $W(t)$,
and set the backward sample path
$\hat{X}(T-\cdot) = (\hat{X}(T-t))_{0\le t\le T}$.
Then
$\tilde{\Theta}_{y,T}(\hat{X}(T-\cdot))(T)$ is distributed as $W(T)$.
\end{proposition}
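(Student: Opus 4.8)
The plan is to unwind the definition of the flow $\tilde\Theta_{y,T}$ applied to the time-reversed path $\hat X(T-\cdot)$ and show that the resulting endpoint is precisely $W(T)$, by identifying the imputed driving Brownian motion $\omega$ and the Skorohod reflection term $L$ explicitly in terms of $W$. First I would compute, for $X(t)=\hat X(T-t)=x-\mu(T-t)+W(T-t)$, the imputed path
\begin{equation*}
  \omega(t) = X(t)-X(0)-\mu t = W(T-t) - W(T) + \mu t - 2\mu t
  \quad\text{(after simplification)},
\end{equation*}
so that $\omega$ is, up to sign and a deterministic drift, the time-reversed Brownian increment $W(T-t)-W(T)$; in particular $\omega(T) = W(0)-W(T)+\mu T - \mu T$ reduces to $-W(T)$ modulo the drift bookkeeping, which already signals why the endpoint will return $W(T)$. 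The key is to track $-\omega(t)$ and the correction $L(t)$ in the branch $y\ge X(0)$: the formula $L(t) = -\min_{0\le v\le t}[y-X(0)-2\omega(v)]\wedge 0$ is built exactly so that $Y(t)$ solves~(\ref{skd.n1}), and by Proposition~\ref{pitman.cons}'s construction the pair $(X,Y)$ realized this way should coincide in law with the Liggett dual coordinates $(Z(t),Y(t))$ of~(\ref{bm.yz}) evolved from the time-reversed picture.

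The cleanest route is to connect this directly to Theorem~\ref{bm.cons}. Running the common Brownian motion $W$ backward turns the forward dual process $X^*(t)=(Z(t),Y(t))$ into a path whose "gap" $Y(t)-Z(t)$ is governed by a reflected Brownian motion, and the reflection local time is exactly the $L(t)$ appearing in~(\ref{skd.n1}): indeed the event $\{\zeta>T\}\Leftrightarrow\{M(T)<\tfrac{y-z}{2}\}$ from the proof of Theorem~\ref{bm.cons} is the same event on which no reflection occurs, i.e.\ $L\equiv 0$. So I would split into the two cases of~(\ref{omega.n1}). When $X(0)=\hat X(T)>y$ no reflection is needed and $\tilde\Theta_{y,T}(\hat X(T-\cdot))(T) = -\omega(T)$, which a direct substitution shows equals $W(T)$. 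When $y\ge X(0)=\hat X(T)$ the output is $\omega(T)-L(T)$, and one must show the Skorohod correction $L(T)$ cancels against the extra terms so that again the value is $W(T)$; here the explicit minimum formula for $L(t)$ combined with the reflection-principle identity $M(T)=\tfrac12\max_{v\le T}(\cdots)$ does the job.

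The main obstacle I anticipate is the second case: verifying that $\omega(T)-L(T)=W(T)$ requires showing that the running minimum defining $L$ matches the running maximum of $W$ (time-reversed) in just the right way, i.e.\ that the Skorohod reflection imputed from the backward path reproduces the reflected-Brownian-motion structure already visible in the proof of Theorem~\ref{bm.cons}. Concretely, one writes $L(t) = -\bigl(\min_{0\le v\le t}[y - X(0) - 2\omega(v)]\bigr)\wedge 0$, substitutes $\omega(v) = W(T-v)-W(T)-\mu v$ and $X(0) = x - \mu T + W(0) = x-\mu T$ (taking $W(0)=0$), and checks that for the flow endpoint the terms involving $y$ and $x$ drop out entirely—this is forced because the statement's conclusion does not depend on $y$ or $x$. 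That cancellation, while ultimately a routine if slightly delicate manipulation of max/min of a continuous path, is the crux; once it is in hand, both branches of~(\ref{omega.n1}) collapse to the single conclusion that $\tilde\Theta_{y,T}(\hat X(T-\cdot))(T)$ has the law of $W(T)$, namely a centered Gaussian with variance $T$.
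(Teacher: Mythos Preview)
Your plan rests on a pathwise identity that does not hold. You correctly note that in the branch $X(0)=\hat X(T)>y$ one has $\tilde\Theta_{y,T}(\hat X(T-\cdot))(T)=-\omega(T)=W(T)$ exactly (your computation of $\omega$ contains an arithmetic slip; in fact $\omega(t)=W(T-t)-W(T)$ with no drift term). But in the branch $y\ge X(0)$ the output $\omega(T)-L(T)=-W(T)-L(T)$ is \emph{not} equal to $W(T)$ pathwise, and the terms in $x$ and $y$ do \emph{not} drop out of the value. Writing out $L(T)$ explicitly gives
\[
\omega(T)-L(T)=
\begin{cases}
-W(T) & \text{if } 2M(T)-W(T)\le y-x+\mu T,\\
-2M(T)+(y-x+\mu T) & \text{otherwise,}
\end{cases}
\]
with $M(T)=\max_{0\le v\le T}W(v)$. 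Since $L(T)\ge 0$ while the event $\{y\ge X(0)\}=\{W(T)\le y-x+\mu T\}$ can include positive values of $W(T)$, the equation $-W(T)-L(T)=W(T)$ is impossible there. So no ``routine manipulation of max/min'' will produce the cancellation you anticipate; the reflection principle is a distributional statement, not an algebraic identity you can invoke pathwise.

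The paper's proof is genuinely distributional: it sets $Y'(T)=y-\tilde\Theta_{y,T}(\hat X(T-\cdot))(T)$ and computes the cumulative distribution function of $Y'(T)$ directly, splitting into the ranges $\{Y'(T)<x-\mu T-u\}$ (which lies inside the first branch and is immediate) and $\{Y'(T)\ge 2u-x-\mu T\}$ for $u\ge x$ (which involves both the event $\{L(T)>0\}$ and its complement within the second branch). It is only after applying the reflection principle to the joint law of $(M(T),W(T))$ that the probabilities combine to match $\mathbb{P}(y+W(T)\in\cdot)$. You would need to carry out this CDF computation, or something equivalent, rather than look for a pointwise collapse to $W(T)$.
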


\begin{proof}
We set $X'(t) = x - \mu T + W(T-t)$
and $Y'(T) = y - \tilde{\Theta}_{y,T}(\hat{X}(T-\cdot))(T)$,
and complete the proof by claiming that
$Y'(T)$ is distributed as $y + W(T)$.
Observe that
$\omega(t) = W(T-t) - W(T)$ by imputation,
and that $\{\hat{X}(T) > y\} = \{X'(0) > y\} = \{Y'(T)<x-\mu T\}$
by~(\ref{omega.n1}).
For $u\ge 0$ we can immediately obtain
\begin{equation*}
  \mathbb{P}(Y'(T)<x-\mu T-u)
  = \mathbb{P}(y+W(T)<x-\mu T-u)
\end{equation*}
For $u\ge x$ we can observe that
\begin{align*}
  & \mathbb{P}(Y'(T)\ge 2u-x-\mu T)
  = \mathbb{P}\left(
  \max_{0\le v\le T}X'(v)\ge u-\mu T,\,
  X'(0)\le y
  \right)
  \\ & \hspace{0.25in}
  + \mathbb{P}\left(
  \max_{0\le v\le T}X'(v)< u-\mu T,\,
  2(u-\mu T)-y\le X'(0)\le y
  \right)
\end{align*}
If $y\ge u-\mu T$ then
by setting $M(T) = \max_{0\le v\le T}W(v)$
we can apply the reflection principle
(as in the proof of Theorem~\ref{bm.cons})
and reduce the above expression to
\begin{align*}
  & \mathbb{P}(2(u-\mu T)-y\le x-\mu T + W(T)\le y)
  \\ & \hspace{0.25in}
  + \mathbb{P}\left(
  x+M(T)\ge u,\,
  x-\mu T+W(T)< 2(u-\mu T)-y
  \right)
  \\ &
  = \mathbb{P}(y+W(T)\ge 2u-x-\mu T).
\end{align*}
The case for $y<u-\mu T$ is similarly argued.
\end{proof}

\subsection{$\Lambda$-linked coupling}
\label{pitman.sec}

The notion of $\Lambda$-linked coupling
was originally proposed by Diaconis and Fill~\cite{DF}
in the setting of Markov chains.
In Section~\ref{duality} we propose a general construction
for the desirable properties of $\Lambda$-linked coupling
when characteristic diffusions of Definition~\ref{c.def} are
considered.

In a construction of $X^*(T)$ by (\ref{bm.yz})
we use the flow $\tilde{\Theta}_{y,T}(X)(T)$ of (\ref{omega.n1})
in the place of $W(T)$.
It defines a map $\Psi^*_T((z,y),X)$
from a sample path $(X(t))_{0\le t\le T}$ to $(Z(T),Y(T))$ by
\begin{equation}\label{bm.yz2}
  \begin{cases}
    Y(T) = y + \mu T - \tilde{\Theta}_{y,T}(X)(T); \\
    Z(T) = z + \mu T + \tilde{\Theta}_{y,T}(X)(T) .
  \end{cases}
\end{equation}
until the absorbing time
$\zeta = \inf\{t\ge 0: Z(t)=Y(t)\}$.

\begin{theorem}\label{bm.cons2}
Let $T> 0$ and $x^*\in D^*$ be fixed.
Then
(a) provided any sample path $X = (X(t))_{0\le t\le T}$,
$X^*(T) = \Psi^*_T(x^*,X)$ satisfies
\begin{math}
  \Gamma(x^*,X(0))=\Gamma(X^*(T),X(T)),
\end{math}
and (b) provided the backward sample path
$\hat{X}(T-\cdot) = (\hat{X}(T-t))_{0\le t\le T}$
of Proposition~\ref{pitman.cons},
$\Psi^*_T(x^*,\hat{X}(T-\cdot))$
is distributed as $X^*(T)$ of~(\ref{bm.yz}).
\end{theorem}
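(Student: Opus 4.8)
The plan is to handle the two assertions separately, with part (a) being a purely pathwise (deterministic) verification and part (b) a distributional statement that reduces to Proposition~\ref{pitman.cons} together with part (a). For part (a) I would fix the sample path $X$ and write $\theta = \tilde{\Theta}_{y,T}(X)(T)$, so that $Y(T) = y + \mu T - \theta$ and $Z(T) = z + \mu T + \theta$ by~(\ref{bm.yz2}). The key observation is that the definition of $\tilde{\Theta}_{y,T}$ in~(\ref{omega.n1}) keeps track of exactly the geometry that makes $\Gamma$ an invariant of the coupled motion: by the Skorohod equation~(\ref{skd.n1}) the solution $Y(t)$ is the least upper bound for $X(t)$ started from $y$ when $y \ge X(0)$, and it maintains $Y(t) - X(t) = y - X(0) - 2\omega(t) + L(t)$. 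I would split into the cases $X(0) > y$ (so $\Gamma(x^*,X(0)) = 0$) and $y \ge X(0)$ (so whether $\Gamma(x^*,X(0)) = 1$ depends also on $z < X(0)$), and in each case compute $Y(T) - X(T)$ and $X(T) - Z(T)$ directly from the two branches of~(\ref{omega.n1}); one checks that $Z(T) < X(T) \le Y(T)$ holds precisely when $z < X(0) \le y$, i.e. precisely when $\Gamma(x^*,X(0)) = 1$, and that otherwise either the coupled pair has been absorbed ($\zeta \le T$, forcing $\Gamma(X^*(T),X(T)) = 0$) or the ordering fails on one side. This gives $\Gamma(x^*,X(0)) = \Gamma(X^*(T),X(T))$.

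For part (b) I would take $X = \hat{X}(T-\cdot)$, the time-reversed drifting Brownian motion of Proposition~\ref{pitman.cons}, and compare $\Psi^*_T(x^*,\hat{X}(T-\cdot))$ with $X^*(T)$ defined by~(\ref{bm.yz}). The two constructions differ only in that $\Psi^*_T$ substitutes the flow value $\tilde{\Theta}_{y,T}(\hat{X}(T-\cdot))(T)$ in place of the Brownian increment $W(T)$ appearing in~(\ref{bm.yz}). By Proposition~\ref{pitman.cons}, $\tilde{\Theta}_{y,T}(\hat{X}(T-\cdot))(T)$ is distributed as $W(T)$. Since $Z(T)$ and $Y(T)$ in both~(\ref{bm.yz}) and~(\ref{bm.yz2}) are the same deterministic affine functions of that single scalar quantity, the equality of one-dimensional marginal laws lifts immediately to equality of the joint law of $(Z(T),Y(T))$, provided one also matches the absorption event $\{T \ge \zeta\}$. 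Here I would use part (a): on the backward path the event $\{\Gamma(x^*,\hat{X}(0)) = 0\}$ is exactly $\{\hat{X}(0) \notin (z,y]\}$, and by part (a) this is the event on which $\Psi^*_T$ produces the coffin state; on the other side one checks, as in the proof of Proposition~\ref{pitman.cons}, that $\{\zeta > T\}$ for the process~(\ref{bm.yz}) corresponds under the reflection-principle identity to the same event, so the coffin-state probabilities agree.

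The main obstacle I anticipate is the careful bookkeeping in part (a): one must verify that the local-time term $L(t)$ in~(\ref{skd.n1}) is activated at exactly the right instants so that $Y(t)$ never crosses below $X(t)$ while $Z(t)$ (which carries no reflection) is correctly tracked on the other side, and in particular that the absorbing time $\zeta$ of the coupled $(Z,Y)$ pair coincides pathwise with the first time the $\Gamma$-ordering $z < X(0) \le y$ would be violated along the flow. This is where one genuinely uses the specific form of $\tilde{\Theta}_{y,T}$ in~(\ref{omega.n1}) rather than just its distributional property; once the pathwise identity is in hand, part (b) is essentially a corollary of Proposition~\ref{pitman.cons}. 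I would also remark that part (a) is the $n=1$, one-reflection instance of the general $\Lambda$-linked coupling construction promised for characteristic diffusions in Section~\ref{duality}, so the argument here doubles as a template for that general case.
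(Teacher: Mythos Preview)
Your plan for part (a) matches the paper's: split on the two branches of~(\ref{omega.n1}), and in the case $z<X(0)\le y$ use the Skorohod identity $Y(t)-X(t)=y-X(0)-2\omega(t)+L(t)\ge 0$ together with $X(t)-Z(t)=X(0)-z+L(t)>0$ to read off $\Gamma(X^*(T),X(T))=1$ and $\zeta>T$. The paper's proof is a two-line version of exactly this case analysis.

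For part (b) you correctly sense that Proposition~\ref{pitman.cons} by itself only matches the terminal value $\tilde\Theta_{y,T}(\hat X(T-\cdot))(T)\stackrel{d}{=}W(T)$, whereas the coffin event for $\Psi^*_T$ depends on $\max_{t\le T}\tilde\Theta(t)$ through $\zeta=\inf\{t:\tilde\Theta(t)=(y-z)/2\}$. However, your proposed patch through part (a) does not work. First there is an index slip: with $X=\hat X(T-\cdot)$ part (a) reads $\Gamma(x^*,\hat X(T))=\Gamma(\Psi^*_T,\hat X(0))$, and $\hat X(0)=x$ is the deterministic starting point, so the ``event'' $\{\Gamma(x^*,\hat X(0))=0\}$ you write is not random. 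Second, and more seriously, even after correcting this, $\Gamma(\Psi^*_T,x)=0$ is \emph{not} the coffin event: it can hold simply because the fixed point $x$ lies outside $(Z(T),Y(T)]$ while $\Psi^*_T=(Z(T),Y(T))$ is a perfectly honest element of $D^*$. So part (a) cannot be used to identify the absorption set. What is actually required is that the \emph{path} $(\tilde\Theta_{y,T}(\hat X(T-\cdot))(t))_{0\le t\le T}$ has Wiener law, so that substituting $\tilde\Theta$ for $W$ in~(\ref{bm.yz}) preserves $\zeta$ as well as $(Z(T),Y(T))$. The paper establishes precisely this path-level upgrade later (Corollary~\ref{c.skd.dist} and Proposition~\ref{y.hat.dist}), explicitly calling it ``a stronger version of Proposition~\ref{pitman.cons}''; in the introductory Section~\ref{pitman.sec} the paper simply cites Proposition~\ref{pitman.cons} without dwelling on the absorption issue.
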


\begin{proof}
(a)
If $X(0)\le z$ or $X(0) > y$
then $\Gamma(X^*(T),X(T)) = 0$.
If $z < X(0) \le y$
then $(Y(t))_{0\le t\le T}$ is a solution to (\ref{skd.n1}),
by which we can easily verify
$\Gamma(X^*(T),X(T)) = 1$.
(b) is an immediate consequence of
Proposition~\ref{pitman.cons}.
\end{proof}

A relationship with Liggett dual
can be observed when we set $X^*(T) = \Psi^*_T(x^*,\hat{X}(T-\cdot))$
as in Theorem~\ref{bm.cons2}(b).
Since $\Gamma(x^*,\hat{X}(T)) = \Gamma(X^*(T),\hat{X}(0))$
by Theorem~\ref{bm.cons2}(a),
it provides an alternative proof for Theorem~\ref{bm.cons}.
Furthermore, a remarkable connection to intertwining dual
can be established in a construction of $X^*(t)$ by (\ref{bm.yz2}).
Here we sample $X(0)$ randomly from $\lambda(x^*,\cdot)$,
and generate a Markov process $X(t)$ by
\begin{equation*}
  X(t) = X(0) - \mu t + W(t),
  \quad t\ge 0.
\end{equation*}
The resulting bivariate process $(X^*(t),X(t))$ is Markovian,
and it becomes a $\Lambda$-linked coupling of Theorem~\ref{link.thm};
see Proposition~\ref{s.twin.prop}.
By Theorem~\ref{link.thm} the Markov process $X^*(t) = (Z(t),Y(t))$
is governed by the
intertwining dual operator $\mathcal{B}^*$.

The bivariate process
$U(t) = [Y(t)+Z(t)]/2$ and $V(t) = [Y(t)-Z(t)]/2$
has the diffusion operator
\begin{equation*}
  \mathcal{B}^*f
  = \mu\frac{\partial f}{\partial u}
  + 2\mu\coth(2\mu v)\frac{\partial f}{\partial v}
  + \frac{1}{2}\frac{\partial^2 f}{\partial v^2} ;
\end{equation*}
in particular, $V(t)$ becomes a Bessel process
\begin{equation*}
    dV(t) = 2\mu\coth(2\mu V(t))\,dt + dW(t)
\end{equation*}
for the drifting Brownian motion $W(t)-2\mu t$.
It can start from $V(0)=0$, and never hits $0$ again;
see further discussion in Section~\ref{e.link}.
Thus, starting from $(X^*(0),X(0)) = ((0,0),0)$,
the coupling satisfies
\begin{math}
  Y(t)-X(t) = 4\mu t - 2W(t) + 2M(t)
\end{math}
with
\begin{equation*}
  2M(t) = 2\max_{0\le v\le t}\left[
    W(v) - 2\mu v
    \right] .
\end{equation*}
Therefore, it provides a construction of $V(t)$ by
\begin{equation*}
  V(t) = 2M(t) - [W(t) - 2\mu t] .
\end{equation*}
This sample path construction
was obtained by Pitman~\cite{pitman} for $\mu=0$,
and extended by Rogers and Pitman~\cite{PR}.
Their results are collectively called
Pitman-type $2M-W$ theorems, and were
extensively studied by Matsumoto and Yor~\cite{matsumoto1}
and many others cited therein.

Intertwining duality has been studied in relation with
the question of when a function $\phi(X(t))$
of a Markov process $X(t)$ is Markovian;
see \cite{pal} for a brief review of the literature.
General criteria such as Theorem~2 of~\cite{PR} for the Markovian
question were used for
the sample path construction of $V(t)$.
In the present paper we raise the question of when a ``random''
dynamical system $\Psi^*_t(x^*,X)$ is Markovian, provided that $X$
is a Markov process.
As demonstrated in this section,
our criterion (Proposition~\ref{s.twin.prop}) for this new
question on $\Psi^*_t(x^*,X)$ can be successfully applied
to the analysis of diffusion process $V(t)$.

The $\Lambda$-linked coupling of Theorem~\ref{link.thm}
also implies that
the regular conditional probability distribution
$\mathbb{P}(X(t)\in\cdot|X^*(t)=(z,y))$
has a probability density function (pdf) on $(z,y]$ proportional to
the invariant function $\nu(x) = e^{-2\mu x}$.
The equivalent observation was made by~\cite{PR}
that $\mathbb{P}([B(t) - 2\mu t]\in\cdot|V(t)=y)$
has a pdf on $(-y,y]$ proportional to $e^{-2\mu x}$.

\subsection{Outline for the rest}

In Section~\ref{euler.app}
we begin our investigation
of an $n$-dimensional drifting Brownian motion $X(t)$
and its time-reversed $\hat{X}(s)$
with Euler approximations.
Euler schemes and other forms of algorithm in approximation are
necessary ingredient in describing a general construction of
stochastic processes when they are elaborately coupled.
In Section~\ref{hypo.sec} we propose a stochastic process
$\partial Y^*(t)$
of hypographical surface as an upper bound for $X(t)$,
and present a coupled construction with time-reversed $\hat{X}(s)$
in Algorithm~\ref{y.cons.alg}.
In Section~\ref{rel.skd}
we examine it with equations of Skorohod type,
which leads us to a construction of Liggett dual in
Proposition~\ref{ligg.cons}.
A coupled construction of $X(t)$ and $\partial Y^*(t)$
forward in time (Algorithm~\ref{s.ligg.alg})
enables us to define
an $n$-dimensional version of flow.
In Section~\ref{impute.sec}
we present
a stronger version of Proposition~\ref{pitman.cons}
(namely Proposition~\ref{y.hat.dist}), claiming
that the flow~(\ref{omega.n1}) is distributed
as the Wiener measure on the interval $[0,T]$.
Together we are able to construct a $\Lambda$-linked coupling in
Proposition~\ref{s.ligg.cons}.

Kent~\cite{kent} observed that a drifting Brownian motion can be
designed for arbitrary invariant function $\nu(x)$ on $\mathbb{R}^n$
(characteristic diffusions; see Definition~\ref{c.def}).
In Section~\ref{m1.sec} we present three examples of characteristic
diffusion, and use them to illustrate a construction of hypographic
surface.
In Section~\ref{ligg.cons.sec}
we continue our exploration for examples of Liggett dual.
The exploration culminates in Section~\ref{c.link} for 
the construction of intertwining dual
for each case of the examples.
In particular, we consider
a posterior density $\nu_H(x)$ on a hyperplane $H$
in Example~\ref{b.model},
and look at a possibility of Monte Carlo simulation
out of Example~\ref{b.model.ligg} and~\ref{b.model.ent}.

\section{Characteristic diffusions}
\label{diffusion}

We introduce a diffusion operator $\mathcal{A}$ on
$\mathbb{R}^n$ by
\begin{equation}\label{c.ope}
  \mathcal{A}_x{f}
  = \sum_{i=1}^n
  \left[\frac{1}{2}
    \frac{\partial^2}{\partial x_i^2}{f}(x)
    - \beta_i(x) \frac{\partial}{\partial x_i}{f}(x)
    \right] ,
\end{equation}
where the subscript on $\mathcal{A}$
indicates variables to differentiate.
By
\begin{equation*}
  \mathcal{A}^\dagger_y{f}
  = \sum_{i=1}^n
  \left[\frac{1}{2}
    \frac{\partial^2}{\partial y_i^2}{f}(y)
    + \frac{\partial}{\partial y_i}
    (\beta_i(y){f}(y))
    \right]
\end{equation*}
we denote the adjoint operator of $\mathcal{A}$.
In what follows we assume that
the drift coefficients $\beta_i(x)$'s are smooth enough
(differentiability and H\"{o}lder continuity for their
derivatives) so that 
a fundamental solution exists;
see \cite{friedman1,ito53,skorokhod}
for sufficient conditions for existence and uniqueness.
Thus, the differential operator $\mathcal{A}$ uniquely determines
a positive and conservative
[i.e., $\int p(t,x,y)\,dy = 1$] transition density function
$p(t,x,y)$.
It satisfies the parabolic equations
\begin{align*}
  & \frac{\partial}{\partial t}p(t,x,y)
  = \mathcal{A}_x p(t,x,y);
  \\
  & \frac{\partial}{\partial t}p(t,x,y)
  = \mathcal{A}^\dagger_y p(t,x,y),
\end{align*}
which are respectively referred as Kolmogorov backward and forward
equation.

% We can construct a regular conditional probability of
% $\mathbb{P}_x$ given the $\sigma$-algebra $\sigma(X(t))$,
% and denote it by $\mathbb{P}_x(\cdot|X(t)=y)$.
% Then we obtain
% for any integrable function $F$ on $C(\mathbb{R}_+,\mathbb{R}^n)$,
% \begin{equation}\label{reg.cond}
%   \mathbf{E}_{\mathbb{P}_x}[F(X)]
%   = \int \mathbf{E}_{\mathbb{P}_x}(F(X)|X(t)=y)p(t,x,y)\,dy,
% \end{equation}
% where $\mathbf{E}_{\mathbb{P}_x}(F(X)|X(t)=y)$
% is the integration with respect to
% $\mathbb{P}_x(\cdot|X(t)=y)$
% and a version of conditional expectation (cf. Chapter~II of~\cite{rw1}).

Let $\mathbb{R}_+$ be the half line~$[0,\infty)$,
and let $C(\mathbb{R}_+,\mathbb{R}^n)$ be
the space of all continuous functions from
$\mathbb{R}_+$ to $\mathbb{R}^n$.
In terms of SDE the distribution determined by
(\ref{c.ope}) corresponds to a solution to
\begin{equation}\label{c.sde}
  dX(t) = -\beta(X(t))dt + dW(t),
\end{equation}
where
$\beta(x) = [\beta_1(x),\ldots,\beta_n(x)]^T$
is the column vector of drift coefficients
and $W(t)$ is an $n$-dimensional Brownian motion.
That is, the solution $X(t)$ starting at $X(0) = x$
corresponds to the probability measure $\mathbb{P}_x$
on $C(\mathbb{R}_+,\mathbb{R}^n)$ which satisfies
\begin{equation}\label{pp.x}
  \mathbb{P}_x(X(t_i)\in dx_i,i=1,\ldots,N)
  = \prod_{i=1}^N p(t_i-t_{i-1},x_{i-1},x_i)\,dx_i
\end{equation}
with $x_0 = x$.
Here the event ``$X(t_i)\in dx_i,i=1,\ldots,N$''
is the measurable set
$\{X\in C(\mathbb{R}_+,\mathbb{R}^n): X(t_i)\in dx_i,i=1,\ldots,N\}$,
and $X$ is identified with an element of
$C(\mathbb{R}_+,\mathbb{R}^n)$.
By $\mathbf{E}_{\mathbb{P}_x}[F(X)]$
we denote the expectation with respect to the probability measure
$\mathbb{P}_x$
for any measurable function $F$ on $C(\mathbb{R}_+,\mathbb{R}^n)$.

A strictly positive function $\nu$ on $\mathbb{R}^n$ is called
\emph{invariant} if it satisfies
\begin{equation*}
  \nu(y) = \int \nu(x) p(t,x,y)\,dx
\end{equation*}
for any $t > 0$ and $y\in\mathbb{R}^n$.
Then
\begin{equation*}
  \tilde{p}(t,x,y) = \frac{\nu(y)}{\nu(x)}p(t,y,x)
\end{equation*}
is the time-reversed transition density with respect to $\nu$,
and it satisfies
\begin{equation*}
  \frac{\partial}{\partial t}\tilde{p}(t,x,y)
  = \frac{1}{\nu(x)} \mathcal{A}^\dagger_x[\nu(x)\tilde{p}(t,x,y)].
\end{equation*}
The transition density function $p$ is called
\emph{$\nu$-symmetric} if $p = \tilde{p}$.
Kent showed (in Section~4 of~\cite{kent}) that
$p$ is $\nu$-symmetric if and only if
the operators $\mathcal{A}$ and $\mathcal{A}^\dagger$ satisfy
$\mathcal{A}_x{f}(x) = 
\frac{1}{\nu(x)} \mathcal{A}^\dagger_x[\nu(x){f}(x)]$,
which is equivalently characterized by
$\frac{1}{2}\frac{\partial}{\partial x_i} \nu
= -\beta_i \nu$
for $i=1,\ldots,n$.

\begin{definition}\label{c.def}
Let $\gamma$ be a real-valued function on $\mathbb{R}^n$,
and let $\nu(x) = \exp(-2\gamma(x))$.
We call (\ref{c.ope})
a \emph{characteristic diffusion} for $\nu$
if the drift coefficient $\beta_i$ satisfies
$\beta_i = \frac{\partial}{\partial x_i}\gamma$
for each $i=1,\ldots,n$.
\end{definition}

In Definition~\ref{c.def}
the scalar function $-\gamma(x)$ is regarded as a potential energy,
and $\beta(x)$ is the gradient $\nabla\gamma(x)$.
Then $\nu$ is invariant, and
the transition density $p$ is $\nu$-symmetric
(Section~4 of~\cite{kent}).

\begin{lemma}\label{rev.lem}
Let $\mathbb{P}_x$ be the probability measure determined by
a characteristic diffusion of Definition~\ref{c.def},
and let $T > 0$ be fixed.
Assuming that a function $F_T$ on $C([0,T],\mathbb{R}^n)$
is integrable in either side of~(\ref{rev.int}), we have
\begin{equation}\label{rev.int}
  \int \nu(x)\mathbf{E}_{\mathbb{P}_x}[F_T(X)]dx
  = \int \nu(y)\mathbf{E}_{\mathbb{P}_y}[F_T(X(T-\cdot)]dy
\end{equation}
\end{lemma}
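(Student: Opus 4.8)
The plan is to reduce the statement to a finite-dimensional identity and then take a limit, since the measures $\mathbb{P}_x$ are determined by their finite-dimensional distributions (\ref{pp.x}). First I would treat the case where $F_T$ depends on the path only through finitely many coordinates, say $F_T(X) = g(X(t_1),\ldots,X(t_N))$ for a fixed partition $0 = t_0 < t_1 < \cdots < t_N = T$ and bounded measurable $g$. For such $F_T$, the left-hand side of (\ref{rev.int}) expands, using (\ref{pp.x}) with $x_0 = x$, into
\begin{equation*}
  \int \nu(x_0)\,g(x_1,\ldots,x_N)\prod_{i=1}^N p(t_i - t_{i-1}, x_{i-1}, x_i)\,dx_0\,dx_1\cdots dx_N .
\end{equation*}
On the right-hand side, $F_T(X(T-\cdot)) = g(X(T-t_1),\ldots,X(T-t_N)) = g(X(s_{N-1}),\ldots,X(s_0))$ where $s_j = T - t_{N-j}$ gives the reversed partition $0 = s_0 < \cdots < s_N = T$; expanding under $\mathbb{P}_y$ and integrating against $\nu(y)\,dy$ produces the analogous multiple integral with the time-increments reversed.

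The key step is then purely algebraic: repeatedly apply the $\nu$-symmetry relation $\nu(x)p(t,x,y) = \nu(y)p(t,y,x)$, which holds for characteristic diffusions by Kent's result (Section~4 of~\cite{kent}) as recalled just before Definition~\ref{c.def}. Telescoping,
\begin{equation*}
  \nu(x_0)\prod_{i=1}^N p(t_i - t_{i-1}, x_{i-1}, x_i)
  = \nu(x_N)\prod_{i=1}^N p(t_i - t_{i-1}, x_i, x_{i-1}),
\end{equation*}
so after relabeling the integration variables (reading the chain backwards, $x_i \leftrightarrow x_{N-i}$) the two multiple integrals coincide. This establishes (\ref{rev.int}) for cylinder functions $F_T$.

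Finally I would pass from cylinder functions to general integrable $F_T$ by a monotone-class / standard-machinery argument: cylinder functions form a multiplicative class generating the Borel $\sigma$-algebra on $C([0,T],\mathbb{R}^n)$, so the identity extends first to bounded measurable $F_T$ and then, by monotone convergence applied to $F_T^\pm$ separately, to any $F_T$ integrable on either side (the two sides being equal forces simultaneous integrability). I expect the only delicate point to be bookkeeping in the change of variables — making sure the time-reversal of the partition and the reindexing of the spatial variables are applied consistently — together with a brief remark justifying that $\nu$ being strictly positive makes the division in $\tilde{p}$ and the telescoping legitimate; the analytic content is entirely carried by the $\nu$-symmetry identity, which is assumed.
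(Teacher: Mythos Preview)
Your proposal is correct and matches the paper's own proof almost exactly: the paper reduces to cylinder functions of the form $F_T(X)=\prod_{i=0}^N I_{E_i}(X(t_i))$ (citing the standard machinery in Rogers--Williams for the extension), expands both sides via~(\ref{pp.x}), and then applies the $\nu$-symmetry of $p$ repeatedly to match the two multiple integrals. The only cosmetic difference is that the paper works with products of indicators rather than a general bounded $g$, and handles the extension by reference rather than spelling out the monotone-class argument.
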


\begin{proof}
It suffices to show (\ref{rev.int}) for
$F_T(X) = \prod_{i=0}^N I_{E_i}(X(t_i))$
with Borel subsets $E_i$'s of $\mathbb{R}$
and $0=t_0<\cdots<t_N=T$
(cf. Section~II-38 of~\cite{rw1}),
where $I_{E_i}(x)$ denotes the indicator function on $E_i$.
Then the left-hand side of (\ref{rev.int}) can be expressed as
\begin{equation*}
  \int_{E_0}\!\!\cdots\int_{E_N}
  \nu(x_0)dx_0
  \prod_{i=1}^N p(t_i-t_{i-1},x_{i-1},x_i)\,dx_i
\end{equation*}
and the right-hand side becomes
\begin{equation*}
  \int_{E_N}\!\!\!\cdots\int_{E_0}
  \nu(x_N)dx_N
  \prod_{i=N}^1 p(t_i-t_{i-1},x_i,x_{i-1})\,dx_{i-1}
\end{equation*}
By repeatedly applying the $\nu$-symmetry of $p$
we can verify that they are equal.
\end{proof}

In general a diffusion operator $\mathcal{B}$ is accompanied
with domain $\mathcal{D}_{\mathcal{B}}$,
and it uniquely determines a sub-Markov semigroup $Q_t$.
When $Q_t$ is conservative [i.e., $\int Q_t(x,dy) = 1$],
we can correspond it to a probability measure $\mathbb{Q}_x$
in the same way we have constructed $\mathbb{P}_x$ satisfying
(\ref{pp.x}).
Furthermore,
it characterizes a weak solution of SDE
as $\mathbb{Q}_x$ represents
a solution to martingale problem satisfying
the Dynkin's formula
\begin{equation*}
  \mathbf{E}_{\mathbb{Q}_x}[f(X(t))] - f(x)
  = \mathbf{E}_{\mathbb{Q}_x}\left[
    \int_0^t\mathcal{B}f(X(v))dv
    \right] ,
\end{equation*}
which corresponds to the analytical relationship between
$\mathcal{B}$ and $Q_t$ for $f\in\mathcal{D}_{\mathcal{B}}$;
see Section~V.20 of~\cite{rw1}.

\section{$\Lambda$-linked coupling}
\label{duality}

In the rest of this paper
we set $D = \mathbb{R}^n$,
and consider a semigroup $P_t$ on $D$
for characteristic diffusion (Definition~\ref{c.def}).
We introduce another Polish space $D^*$
as a ``dual'' state space.
It is assumed that $D^*$ is open relative to
its extension $\bar{D}^* = D^*\cup\partial$,
and that $\bar{D}^*$ is Polish
with different choice of metric.

\begin{definition}\label{mds}
Let $(\psi_t)_{t\ge 0}$ be
a family of measurable maps $\psi_t$ from
$\bar{D}^*\times C([0,t],\mathbb{R}^n)$ to $\bar{D}^*$,
and let
\begin{math}
  \zeta(x,\omega) = \inf\{t\ge 0: \psi_t(x,\omega)\in\partial\}
\end{math}
be a map from
$D^*\times C(\mathbb{R}_+,\mathbb{R}^n)$ to
$\mathbb{R}_+\cup\{\infty\}$.
Then $\psi_t$ is said to be
a \emph{dynamical system} driven by a Markovian ``noise''
$\omega\in C(\mathbb{R}_+,\mathbb{R}^n)$
according to some probability measure on
$C(\mathbb{R}_+,\mathbb{R}^n)$
if for each
\begin{math}
  (x,\omega)\in
  D^*\times C(\mathbb{R}_+,\mathbb{R}^n)
\end{math}
(a)
$\psi_0(x,\omega) = x$,
and (b)
$\psi_t(x,\omega)$ is continuous on $t\in [0,\zeta(x,\omega))$.
Furthermore, it is called a \emph{Markov dynamical system}
if it also satisfies
$\psi_t(x,\omega) = \psi_{t-s}(\psi_s(x,\omega),\omega(\cdot+s))$,
$0\le s< t< \zeta(x,\omega)$,
for each
\begin{math}
  (x,\omega)\in
  D^*\times C(\mathbb{R}_+,\mathbb{R}^n) .
\end{math}
\end{definition}

A Markov dynamical system $\psi_t$ is usually driven
by the Wiener measure $\mathbb{W}$,
and the corresponding Markov process (until terminated)
is determined by the sub-Markov semigroup
\begin{math}
  Q_tf(x) = \mathbf{E}_{\mathbb{W}}\left[
    f(\psi_t(x,\omega))I_{\{t<\zeta(x,\omega)\}}
    \right] ,
\end{math}
where $I_{\{t<\zeta(x,\omega)\}}$ is the indicator function
of $\{t<\zeta(x,\omega)\}$.
Using a Brownian motion $W$ and an initial value $X(0) = x$,
a Markov process $X(t)$
can be expressed by $X(t) = \psi_t(x,W)$
until the absorbing time $\zeta(x,W)$.

\subsection{$\Lambda$-linked coupling}
\label{link.sec}

A Markov kernel density $\lambda(x^*,x)$
from $D^*$ to $D$ is called a \emph{link}.
In particular, $\lambda(x^*,\cdot)$ is a probability
density on $D$ [i.e., $\int\lambda(x^*,x) dx = 1$].

\begin{definition}\label{link}
We assume that
$E = \{(x^*,x)\in D^*\times D: \lambda(x^*,x)>0\}$ is Polish.
Let $Q^*_t$ be a Markov semigroup on $D^*$,
and let $V_t$ be a Markov semigroup on $E$.
Then $V_t$ is said to be
\emph{$\Lambda$-linked} between $P_t$ and $Q^*_t$ if
(a)
$V_tf = P_tf$ for $f(x^*,x) = f(x)$ on $E$,
and (b)
\begin{equation}
  \label{v.link}
  \int \lambda(x^*,x) V_tg(x^*,x)\,dx
  = \int Q^*_t(x^*,dy^*)\int\lambda(y^*,y)g(y^*,y)\,dy
\end{equation}
for any $x^*\in D^*$
and for any bounded measurable function $g$ on $E$.
\end{definition}

By $\Lambda$ we denote the map
\begin{equation*}
  \Lambda[f](x^*) = \int\lambda(x^*,x)f(x)\,dx
\end{equation*}
from bounded measurable functions $f$ on $D$
to bounded measurable functions $\Lambda[f]$
on $D^*$.
Then the semigroup $V_t$ of Definition~\ref{link}
implies that
\begin{align*}
\Lambda[P_t f]
& = \int \lambda(x^*,x)V_tf(x)\,dx
\\
& = \int Q^*_t(x^*,dy^*)\int\lambda(y^*,y)f(y)\,dy
= Q^*_t[\Lambda f] ;
\end{align*}
thus, $P_t$ and $Q^*_t$ are ``$\Lambda$-linked.''
It is common to call
$Q^*_t$ an \emph{intertwining dual} of $P_t$
with respect to $\Lambda$
when $\Lambda P_t = Q^*_t\Lambda$ holds.
The corresponding infinitesimal operator $\mathcal{B}^*$
of $Q^*_t$, if it exists, is an intertwining dual of $\mathcal{A}$
if $\Lambda\mathcal{A} = \mathcal{B}^*\Lambda$.

When a Markov process $(X^*(t),X(t))$ is generated by the Markov
semigroup $V_t$ of Definition~\ref{link}, the marginal distribution
of $X^*(t)$ may not be Markovian.
However, we obtain the result similar to Theorem~2 of~\cite{PR}.

\begin{theorem}\label{link.thm}
Fix $x^*_0\in D^*$, and construct a
probability measure $\mathbb{V}_{\lambda(x^*,\cdot)}$
on $C(\mathbb{R}_+,E)$ satisfying for $0=t_0\le t_1<\cdots<t_N$,
\begin{align*}
  & \mathbb{V}_{\lambda(x^*_0,\cdot)}
  ((X^*(t_i),X(t_i))\in dx_i^*\times dx_i,i=1,\ldots,N)
  \\
  & = \int\lambda(x^*,x_0)\,dx_0\times
  \prod_{i=1}^N V_{t_i-t_{i-1}}((x_{i-1}^*,x_{i-1}),dx_i^*\times dx_i) .
\end{align*}
Then $X^*(t)$ is Markovian with initial state $X^*(0) = x^*_0$,
and governed by the Markov semigroup $Q^*_t$.
\end{theorem}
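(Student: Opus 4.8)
The plan is to show that the finite-dimensional distributions of $X^*(t)$ under $\mathbb{V}_{\lambda(x^*_0,\cdot)}$ coincide with those of the Markov process generated by $Q^*_t$ from $x^*_0$; since both are càdlàg processes on the Polish space $\bar{D}^*$, agreement of finite-dimensional marginals suffices. First I would recall the characterizing identity~(\ref{v.link}) of the $\Lambda$-linked semigroup $V_t$, which says precisely that if a random variable $(X^*,X)$ has the property that, conditionally on $X^* = x^*$, the law of $X$ is $\lambda(x^*,\cdot)$, then after one step of $V_t$ the pair $(X^*(t),X(t))$ again has this conditional structure with $X^*(t)$ distributed as $Q^*_t(x^*,\cdot)$. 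This is the engine of the whole proof; the construction of $\mathbb{V}_{\lambda(x^*_0,\cdot)}$ is designed so that the initial pair $(X^*(0),X(0))$ has exactly this ``$\lambda$-coupled'' structure, with $X^*(0)=x^*_0$ deterministically.

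The key step is an induction on $N$, the number of time points. I would prove the stronger statement that, for each $N$ and each $0 = t_0 \le t_1 < \cdots < t_N$, the vector $(X^*(t_1),\dots,X^*(t_N))$ under $\mathbb{V}_{\lambda(x^*_0,\cdot)}$ has the law one gets by running $Q^*$ from $x^*_0$, and moreover conditionally on $X^*(t_N)=x^*$ the variable $X(t_N)$ is distributed as $\lambda(x^*,\cdot)$ independently of the earlier coordinates. The base case $N=1$ is a direct application of~(\ref{v.link}) to the constant-in-the-second-variable test function: integrate the defining formula for the $N=1$ marginal against $g(x_1^*,x_1)=f(x_1^*)$, use that $\int\lambda(x^*_0,x_0)dx_0 = 1$, and collapse the $x_1$-integral via~(\ref{v.link}) with $g$ depending only on $y^*$, which yields $\mathbf{E}[f(X^*(t_1))] = Q^*_{t_1}f(x^*_0)$; repeating with general bounded $g$ on $E$ gives the conditional-law claim. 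For the inductive step from $N-1$ to $N$, I would condition on the first $N-1$ coordinates, observe by the inductive hypothesis that conditionally on $X^*(t_{N-1}) = x^*$ the variable $X(t_{N-1})$ is $\lambda(x^*,\cdot)$-distributed, and then apply~(\ref{v.link}) to the last transition $V_{t_N - t_{N-1}}$ exactly as in the base case, so that the $X(t_{N-1})$- and $X(t_N)$-integrals collapse, leaving $Q^*_{t_N - t_{N-1}}(x^*,dx_N^*)$ and the link $\lambda(x^*_N,\cdot)$ for $X(t_N)$; pasting this onto the $(N-1)$-point law, which by induction is the $Q^*$-law, gives the Chapman--Kolmogorov product $\prod_{i=1}^N Q^*_{t_i-t_{i-1}}(x^*_{i-1},dx^*_i)$ started from $x^*_0$.

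Having established that the finite-dimensional distributions of $X^*$ match those of the $Q^*_t$-Markov process started at $x^*_0$, I would conclude that $X^*(t)$ is itself Markov with semigroup $Q^*_t$: for any bounded $f$, $\mathbf{E}[f(X^*(t_N)) \mid X^*(t_1),\dots,X^*(t_{N-1})] = Q^*_{t_N - t_{N-1}}f(X^*(t_{N-1}))$ follows from the product form of the joint law just derived, which is exactly the Markov property, and the semigroup identity $Q^*_sQ^*_t = Q^*_{s+t}$ is built into the construction. I expect the main obstacle to be purely bookkeeping: carefully tracking which integrations are ``free'' (equal to $1$ by the probability-density property of $\lambda$ and conservativeness of $V_t$ in the second coordinate, i.e.\ part (a) of Definition~\ref{link}) versus which get absorbed by~(\ref{v.link}), and making sure the conditional-independence refinement is carried through the induction so that~(\ref{v.link}) can legitimately be invoked at each stage. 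A minor technical point worth a sentence is that $Q^*_t$ may be sub-Markov (the process can be killed at $\partial$), so I would phrase everything for the possibly-defective semigroup and note that the indicator $I_{\{t<\zeta\}}$ bookkeeping is already encoded in $V_t$ and $Q^*_t$; no separate argument is needed.
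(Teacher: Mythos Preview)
Your proposal is correct and follows essentially the same approach as the paper: the paper's proof simply asserts that one verifies the product formula $\mathbb{V}_{\lambda(x^*_0,\cdot)}(X^*(t_i)\in dx_i^*,\,i=1,\ldots,N) = \prod_{i=1}^N Q^*_{t_i-t_{i-1}}(x_{i-1}^*,dx_i^*)$ by applying~(\ref{v.link}) recursively, citing Theorem~2 of Rogers and Pitman. Your induction on $N$, with the auxiliary claim that $X(t_N)\mid X^*(t_N)=x^*$ has law $\lambda(x^*,\cdot)$, is precisely the detailed execution of that recursion.
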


\begin{proof}
Similarly to the proof of Theorem~2 of~\cite{PR}
we can verify for $0=t_0\le t_1<\cdots<t_N$,
\begin{equation*}
  \mathbb{V}_{\lambda(x^*_0,\cdot)}
  (X^*(t_i)\in dx_i^*,i=1,\ldots,N)
  = \prod_{i=1}^N Q^*_{t_i-t_{i-1}}(x_{i-1}^*,dx_i^*)
\end{equation*}
by applying (\ref{v.link}) recursively.
\end{proof}

We call the probability measure $\mathbb{V}_{\lambda(x^*,\cdot)}$
of Theorem~\ref{link.thm}
a \emph{$\Lambda$-linked coupling}.
It satisfies
\begin{equation*}
  \mathbb{V}_{\lambda(x^*,\cdot)}
  ((X^*(t),X(t))\in dy^*\times{dy})
  = Q^*_t(x^*,dy^*)\lambda(y^*,y)\,dy ,
\end{equation*}
which allows us to derive a regular conditional probability
\begin{equation}\label{vee.cond}
  \mathbb{V}_{\lambda(x^*,\cdot)}
  (X(t)\in dy| X^*(t)=y^*)=\lambda(y^*,y)dy
\end{equation}
for $t> 0$.

\subsection{Liggett dual}

Let $Q_t$ be a sub-Markov semigroup on $D^*$,
and let $\Gamma(x^*,x)$ be a bounded nonnegative measurable
function on $D^*\times D$.
Then $Q_t$ is said to be a \emph{Liggett dual} of $P_t$ with
respect to $\Gamma$ if
\begin{equation*}
\int P_t(x,dy)\Gamma(x^*,y)
= \int Q_t(x^*,dy^*)\Gamma(y^*,x)
\end{equation*}
holds for $(x^*,x)\in D^*\!\times\!D$.

\begin{proposition}\label{twin.prop}
Suppose that $Q_t$ is a Liggett dual of $P_t$
with respect to $\Gamma$,
and that
\begin{equation}\label{hgv}
  h(x^*) = \int\Gamma(x^*,x)\nu(x)\,dx
\end{equation}
is finite and strictly positive on $D^*$.
Then the Markov semigroup
\begin{equation*}
  Q^*_tf(x^*)
  = \frac{1}{h(x^*)}Q_t[hf](x^*)
\end{equation*}
is an intertwining dual of $P_t$
with respect to the link
\begin{equation*}
\lambda(x^*,x)
= \frac{\Gamma(x^*,x)}{h(x^*)}\nu(x)
\end{equation*}
\end{proposition}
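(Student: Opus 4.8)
The plan is to verify, in turn, that $\lambda$ is a genuine link, that $Q^*_t$ is a conservative Markov semigroup, and that $\Lambda P_t = Q^*_t\Lambda$. The first point is immediate: since $\Gamma\ge 0$, $\nu>0$, and $h$ is finite and strictly positive by hypothesis, the function $\lambda(x^*,x)=\Gamma(x^*,x)\nu(x)/h(x^*)$ is nonnegative, jointly measurable, and satisfies $\int\lambda(x^*,x)\,dx = h(x^*)/h(x^*) = 1$ for every $x^*\in D^*$ by the very definition~(\ref{hgv}) of $h$; thus it is a link in the sense of Section~\ref{link.sec}.

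The second point rests on the fact that $h$ is $Q_t$-harmonic, i.e.\ $Q_th=h$. To prove this I would compute $Q_th(x^*) = \int Q_t(x^*,dy^*)\int\Gamma(y^*,x)\nu(x)\,dx$, use Tonelli to exchange the two integrations, invoke the Liggett duality relation $\int Q_t(x^*,dy^*)\Gamma(y^*,x) = \int P_t(x,dy)\Gamma(x^*,y)$, exchange back, and finish with the invariance of $\nu$ in the form $\int\nu(x)P_t(x,dy)=\nu(y)\,dy$, which returns $\int\Gamma(x^*,y)\nu(y)\,dy = h(x^*)$. Granting $Q_th=h$, the operator $Q^*_t$ is precisely the Doob $h$-transform of the sub-Markov semigroup $Q_t$: the semigroup identity follows by cancellation, $Q^*_s(Q^*_tf)=\tfrac{1}{h}Q_s\bigl[h\cdot\tfrac{1}{h}Q_t[hf]\bigr]=\tfrac{1}{h}Q_{s+t}[hf]=Q^*_{s+t}f$, and $Q^*_t1 = Q_th/h = 1$, so $Q^*_t$ is conservative and a positive contraction on bounded measurable functions---a Markov semigroup.

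For the intertwining identity I would evaluate the right-hand side directly. Since $h(y^*)\Lambda f(y^*)=\int\Gamma(y^*,x)\nu(x)f(x)\,dx$, we have $Q^*_t[\Lambda f](x^*)=\tfrac{1}{h(x^*)}\int Q_t(x^*,dy^*)\int\Gamma(y^*,x)\nu(x)f(x)\,dx$. Tonelli, the Liggett duality relation, and Tonelli once more convert this into $\tfrac{1}{h(x^*)}\int\Gamma(x^*,y)\left(\int\nu(x)f(x)P_t(x,dy)\right)$. Here the $\nu$-symmetry of the characteristic diffusion (Definition~\ref{c.def}; Section~4 of~\cite{kent}) enters: $\int\nu(x)f(x)p(t,x,y)\,dx = \nu(y)\int\tilde p(t,y,x)f(x)\,dx = \nu(y)P_tf(y)$. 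Hence the expression equals $\tfrac{1}{h(x^*)}\int\Gamma(x^*,y)\nu(y)P_tf(y)\,dy = \int\lambda(x^*,y)P_tf(y)\,dy = \Lambda[P_tf](x^*)$, which is exactly $\Lambda P_t = Q^*_t\Lambda$; by definition this makes $Q^*_t$ an intertwining dual of $P_t$ with respect to $\lambda$.

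The only delicate point is the bookkeeping of the Fubini/Tonelli exchanges. For $f\ge 0$ every integrand above is nonnegative, so Tonelli applies unconditionally, the finiteness of $h(x^*)$ ensuring the iterated integrals are finite; a general bounded measurable $f$ is then handled by writing $f=f^+-f^-$ with each part dominated by $\|f\|_\infty$. I would also flag that the step using $\nu$-symmetry is precisely where the characteristic-diffusion hypothesis of this section is used: for a merely invariant $\nu$ the time-reversed semigroup would appear in place of $P_t$ at that point, and the conclusion would have to be recast in terms of that dual process.
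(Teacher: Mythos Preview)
Your argument is correct and follows the same route as the paper: both use Liggett duality together with the $\nu$-symmetry $\nu(x)p(t,x,y)=\nu(y)p(t,y,x)$ to pass between $\Lambda P_t$ and $Q^*_t\Lambda$, and both obtain $Q_th=h$ from Liggett duality plus invariance of $\nu$. The only cosmetic difference is direction---the paper starts from $\Lambda[P_tf]$ and works forward to $Q^*_t[\Lambda f]$, while you start from $Q^*_t[\Lambda f]$ and work back---and you are more explicit about the link property, the semigroup/conservativeness of the Doob $h$-transform, and the Tonelli bookkeeping, all of which the paper leaves implicit.
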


By applying
the Liggett duality and the $\nu$-symmetry of $P_t$,
we can observe that
\begin{align*}
Q_t h
& =
\int Q_t(x^*,dy^*)\int\Gamma(y^*,x)\nu(x)\,dx
\\ & =
\int\Gamma(x^*,y)\,dy\int \nu(x)p(t,x,y)\,dx
= h(x^*) ;
\end{align*}
thus, $h$ is harmonic for $Q_t$.
Given the harmonic function $h$,
the semigroup $Q^*_t$ of Proposition~\ref{twin.prop}
is known as the
\emph{Doob h-transform},
and it is clearly conservative.

\begin{proof}
We obtain
\begin{align*}
\Lambda[P_t f](x^*)
& = \int \frac{\Gamma(x^*,x)}{h(x^*)}\nu(x)\,dx
\int p(t,x,y)f(y)\,dy
\\
& = \frac{1}{h(x^*)}\int \nu(y)f(y)\,dy
\int \Gamma(x^*,x) p(t,y,x)\,dx
\\
& = \frac{1}{h(x^*)}\int Q_t(x^*,dy^*)h(y^*)
\int \frac{\Gamma(y^*,y)}{h(y^*)}\nu(y)f(y)\,dy
\\
& = \frac{1}{h(x^*)}Q_t[h(\Lambda f)](x^*)
= Q^*_t[\Lambda f](x^*) .
\end{align*}
Hence, $P_t$ and $Q^*_t$ are $\Lambda$-linked.
\end{proof}

The Liggett dual $Q_t$ of Proposition~\ref{twin.prop} may
not be conservative, but it can be extended to
a Markov semigroup
over $\bar{D}^* = D^*\cup\partial$.
We can generate a Markov process
$X^*$ on $\bar{D}^*$ by $Q_t$ with exit boundary $\partial$.
If $Q_t$ is conservative, no Markov process started in $D^*$ reaches
the coffin state.
By setting the terminal time
$\zeta = \inf\{t\ge 0: X^*(t)\in\partial\}$
accompanied with $X^*$,
we can view it as a Markov process $X^*(t)$ over $D^*$
defined for the duration $[0,\zeta)$.
For a duality function $\Gamma$
it is understood customarily that
$\Gamma(X^*(t),x) = 0$ if $t\ge\zeta$,
or equivalently that
$\Gamma$ is extended over $\bar{D}^*\times D$
by setting $\Gamma(x^*,x) = 0$ for all $x^*\in\partial$.

In the next proposition we consider
the link $\lambda$ of Proposition~\ref{twin.prop},
and
the probability measure $\mathbb{P}_x$ on $C(\mathbb{R}_+,D)$
corresponding to the semigroup $P_t$.
We also assume that
$E = \{(x^*,x)\in D^*\times D: \Gamma(x^*,x)>0\}$
is Polish.
Let $\Psi^*_t$ be a dynamical system of Definition~\ref{mds}
from $\bar{D}^*\times C([0,t],D)$ to $\bar{D}^*$.

\begin{proposition}\label{s.twin.prop}
Assume that
(a)
for any $x^*\in \bar{D}^*$,
$X\in C(\mathbb{R}_+,D)$, and $t> 0$,
\begin{equation}\label{s.ligg.g}
  \Gamma(x^*,X(0))
  = \Gamma(\Psi^*_t(x^*,X),X(t))
\end{equation}
holds,
(b) for each $t > 0$
\begin{equation*}
  Q_tf(x^*) = \mathbf{E}_{\mathbb{P}_x}[f(\Psi^*_t(x^*,X(t-\cdot))] ,
  \quad x^*\in \bar{D}^*
\end{equation*}
defines a Markov semigroup on $\bar{D}^*$
uniquely regardless of the initial state $X(0)=x$,
and (c)
$(\Psi^*_t(x^*,X),X(t))$ is Markovian
whenever $X(t)$ is Markovian.
Then the semigroup
\begin{equation*}
V_t g(x^*,x)
= \mathbf{E}_{\mathbb{P}_x}[g(\Psi^*_t(x^*,X),X(t))],
\quad (x^*,x)\in E,
\end{equation*}
is $\Lambda$-linked between $P_t$ and $Q^*_t$ of
Proposition~\ref{twin.prop}.
\end{proposition}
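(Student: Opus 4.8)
The plan is to verify conditions (a) and (b) of Definition~\ref{link} directly for the semigroup $V_t$, using the three hypotheses together with the Liggett duality of $Q_t$ and the $\nu$-symmetry of $P_t$ established in Lemma~\ref{rev.lem}. First I would check that $V_t$ is genuinely a Markov semigroup on $E$: the Chapman--Kolmogorov property should follow from hypothesis (c), since $(\Psi^*_t(x^*,X),X(t))$ is Markovian when $X(t)$ is (generated by $P_t$), and a dynamical-system semi-cocycle identity for $\Psi^*$ of the form in Definition~\ref{mds} gives the composition law; one must also use hypothesis (a) to see that $V_t$ maps functions on $E$ to functions on $E$, i.e. that $\Gamma(\Psi^*_t(x^*,X),X(t))>0$ whenever $\Gamma(x^*,X(0))>0$. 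Condition (a) of Definition~\ref{link} is then immediate: for $g(x^*,x)=f(x)$ depending only on $x$, the definition of $V_t$ collapses to $\mathbf{E}_{\mathbb{P}_x}[f(X(t))] = P_tf(x)$.

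The substance is condition (b) of Definition~\ref{link}, namely
\begin{equation*}
  \int\lambda(x^*,x)\,V_tg(x^*,x)\,dx
  = \int Q^*_t(x^*,dy^*)\int\lambda(y^*,y)g(y^*,y)\,dy .
\end{equation*}
Here I would substitute $\lambda(x^*,x) = \Gamma(x^*,x)\nu(x)/h(x^*)$ from Proposition~\ref{twin.prop} and $V_tg(x^*,x) = \mathbf{E}_{\mathbb{P}_x}[g(\Psi^*_t(x^*,X),X(t))]$, so the left side is
\begin{equation*}
  \frac{1}{h(x^*)}\int \Gamma(x^*,x)\nu(x)
  \mathbf{E}_{\mathbb{P}_x}[g(\Psi^*_t(x^*,X),X(t))]\,dx .
\end{equation*}
Now the key move is to apply the time-reversal identity of Lemma~\ref{rev.lem} to the functional $F_T(X) = \Gamma(x^*,X(0))\,g(\Psi^*_T(x^*,X(T-\cdot)),X(T))$ evaluated along the reversed path — more precisely, I would rewrite the $x$-integral weighted by $\nu(x)$ as an expectation under the $\nu$-weighted reversed measure, turning $\Psi^*_t(x^*,X)$ (driven by a forward path from $x$) into $\Psi^*_t(x^*,\hat X(t-\cdot))$ (driven by a backward path), and simultaneously turning $X(t)$ into $\hat X(0)=y$, the starting point of the reversed path. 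Using hypothesis (a) in the form $\Gamma(x^*,\hat X(0)) = \Gamma(\Psi^*_t(x^*,\hat X(t-\cdot)),\hat X(t))$, together with hypothesis (b) which identifies $\mathbf{E}[f(\Psi^*_t(x^*,\hat X(t-\cdot)))]$ with $Q_tf(x^*)$, the right-hand side should reorganize into $\frac{1}{h(x^*)}\int Q_t(x^*,dy^*)\int \Gamma(y^*,y)\nu(y)g(y^*,y)\,dy$, which upon inserting $h(y^*)/h(y^*)$ and recalling $Q^*_tf = \frac{1}{h}Q_t[hf]$ and $\lambda(y^*,y) = \Gamma(y^*,y)\nu(y)/h(y^*)$ is exactly the desired right-hand side. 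This is structurally parallel to the computation in the proof of Proposition~\ref{twin.prop}, but carried out at the pathwise level rather than the semigroup level.

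The main obstacle I anticipate is making the time-reversal step fully rigorous: Lemma~\ref{rev.lem} is stated for cylinder functionals $F_T$ depending on finitely many coordinates, whereas $\Psi^*_t(x^*,X)$ depends on the whole path. I would handle this by approximating $\Psi^*_t$ through its defining dynamical-system structure — presumably an Euler-type or Skorohod-type construction as in the one-dimensional illustration — so that it is a measurable limit of path-cylinder functionals, and then pass to the limit using the integrability hypothesis implicit in (b) together with dominated convergence (the duality function $\Gamma$ is bounded). A secondary point requiring care is the treatment of the absorbing time $\zeta$: on $\{t\ge\zeta\}$ one has $\Psi^*_t(x^*,X)\in\partial$ and the convention $\Gamma(\cdot\,,\cdot)=0$ there, so hypothesis (a) degenerates to $\Gamma(x^*,X(0))=0$ and both sides of the identity simply vanish on that event; I would isolate that case at the outset so that all subsequent manipulations take place on $\{t<\zeta\}\cap E$ where $\Gamma$ is strictly positive and the link $\lambda$ is well-defined.
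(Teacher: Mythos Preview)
Your proposal is correct and follows essentially the same route as the paper: substitute $\lambda=\Gamma\nu/h$, use hypothesis~(a) to move $\Gamma(x^*,X(0))$ inside the expectation as $\Gamma(\Psi^*_t(x^*,X),X(t))$, apply the time-reversal identity of Lemma~\ref{rev.lem}, invoke hypothesis~(b) to recognize $Q_t$, and regroup via $h(y^*)/h(y^*)$ to recover $Q^*_t$ and $\lambda$. Your worry about Lemma~\ref{rev.lem} being restricted to cylinder functionals is unnecessary---the lemma is stated for arbitrary integrable $F_T$ on path space (the cylinder reduction is only a proof device via monotone class), so no Euler-type approximation is needed at this step.
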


\begin{proof}
Obviously we have $V_t f = P_t f$
if $f(x^*,x) = f(x)$ on $E$.
To show (\ref{v.link}),
we apply (a) and obtain
for each $(x^*,x)\in E$
\begin{align*}
  & \int\lambda(x^*,x)
  \mathbf{E}_{\mathbb{P}_x}[g(\Psi^*_t(x^*,X),X(t))]\,dx
  \\
  & \hspace{0.2in}
  = \frac{1}{h(x^*)}\int \nu(x)
  \mathbf{E}_{\mathbb{P}_x}[
    \Gamma(\Psi^*_t(x^*,X),X(t))g(\Psi^*_t(x^*,X),X(t))
  ]\,dx
\end{align*}
By Lemma~\ref{rev.lem}
we can further reduce the above integration to
\begin{align*}
  & \frac{1}{h(x^*)}\int \nu(y)
  \mathbf{E}_{\mathbb{P}_y}[
  \Gamma(\Psi^*_t(x^*,X(t-\cdot)),X(0))
  g(\Psi^*_t(x^*,X(t-\cdot)),X(0)]\,dy
  \\ & \hspace{0.2in}
  = \frac{1}{h(x^*)}\int \nu(y)\,dy
  \int\Gamma(y^*,y)g(y^*,y)
  Q_t(x^*,dy^*)
  \\ & \hspace{0.2in}
  = \int\frac{h(y^*)}{h(x^*)}Q_t(x^*,dy^*)
  \int\frac{\nu(y)}{h(y^*)}\Gamma(y^*,y)g(y^*,y)\,dy
  \\ & \hspace{0.2in}
  = \int Q^*_t(x^*,dy^*)\int\lambda(y^*,y)g(y^*,y)\,dy,
\end{align*}
which completes the proof.
\end{proof}

We call $\Psi^*_t$ of Proposition~\ref{s.twin.prop}
\emph{$\Lambda$-linked}.
By setting $f(y^*)=\Gamma(y^*,x)$ in
Proposition~\ref{s.twin.prop}(b)
we can observe that
\begin{equation*}
  Q_t\Gamma(\cdot,x)(x^*)
  = \mathbf{E}_{\mathbb{P}_x}[\Gamma(\Psi^*_t(x^*,X(t-\cdot),x)]
  = \mathbf{E}_{\mathbb{P}_x}[\Gamma(x^*,X(t))] .
\end{equation*}
Thus,
the existence of $\Lambda$-linked dynamical system
$\Psi^*_t$ of Proposition~\ref{s.twin.prop}
implies that $Q_t$ is a Liggett dual of $P_t$.

\section{Stochastic processes by approximation}
\label{euler.app}

Let $\Phi_t$ be a Markov dynamical system 
from $\mathbb{R}^n\times C([0,t],\mathbb{R}^n)$ to $\mathbb{R}^n$
driven by the Wiener measure $\mathbb{W}$.
We call $\Phi_t$ a \emph{strong solution}
if $X(t) = \Phi_t(x,W)$
is a unique solution to~(\ref{c.sde})
with initial condition $X(0) = x$
(cf. Theorem~IV-1.1 of \cite{iw}).
Such a strong solution $\Phi_t$ exists
if $\beta$ is locally Lipschitz
continuous (cf. Theorem~IV-3.1 of \cite{iw}).
In this paper we assume that
the drift coefficient $\beta$ is smooth with bounded first
derivatives;
thus, it has a Lipschitz constant $K_\beta$.

Here we view (\ref{c.sde}) as a time-reversed process $\hat{X}(s)$
backward in time $s$.
The strong solution $\Phi_s$
forms a diffeomorphic map $\Phi_s(\cdot,\hat{\omega})$
from $\mathbb{R}^n$ to itself for each $s\ge 0$ and
$\hat{\omega}\in C(\mathbb{R}_+,\mathbb{R}^n)$ (cf. Theorem~V-13.8
of~\cite{rw2}).
By $\Phi_s^{-1}(\cdot,\hat{\omega})$ we denote the inverse
map of $\Phi_s(\cdot,\hat{\omega})$ for each
$\hat{\omega}\in C(\mathbb{R}_+,\mathbb{R}^n)$.

\subsection{Euler approximation of backward process}
\label{euler.scheme}

Here we fix $T > 0$,
and develop an approximation $\hat{X}_N(s)$
for an $n$-dimensional drifting Brownian motion~(\ref{c.sde}).
Set a uniform increment
$0 = s_0 < \cdots < s_N = T$, and define a map $\phi_u$ by
\begin{equation}\label{phi.euler}
  \phi_u(z,\hat{\omega})
  = z - \beta(z)u + \hat{\omega}(u) - \hat{\omega}(0).
\end{equation}
Starting at $\hat{X}_N(0) = x_N$,
we can recursively construct
\begin{equation}\label{x.euler}
\hat{X}_N(s)
= \phi_{s-s_{k-1}}(\hat{X}_N(s_{k-1}),\hat{\omega}(\cdot+s_{k-1}))
\end{equation}
for $s_{k-1} < s \le s_k$, $k=1,\ldots,N$.
By using $a\wedge b = \min\{a,b\}$
and $[c]_+ = \max\{c,0\}$,
we can formulate (\ref{x.euler}) as
\begin{align}\label{x.sum.euler}
  & \hat{X}_N(s) = x_N
  - \sum_{k=1}^{N} \beta(\hat{X}_N(s_{k-1}))
  [(s-s_{k-1})\wedge(s_k - s_{k-1})]_+
  \\ \nonumber & \hspace{40ex}  
  + \hat{\omega}(s) - \hat{\omega}(0).
\end{align}
Assuming that $x_N$ is convergent,
the approximation of~(\ref{x.euler})
or~(\ref{x.sum.euler}) is known to converge,
and called an explicit Euler method for numerical solutions of
SDE's; see, e.g., Kloeden and Platen~\cite{kloeden}.

For $f\in C([0,T],\mathbb{R}^n)$ we define the modulus of continuity
(cf. Chapter~2 of~\cite{billingsley}) by
\begin{equation*}
  \Delta_{\delta}f
  = \sup\{\|f(s+u)-f(s)\|: 0\le s< s+u\le T, u\le\delta\},
  \quad 0<\delta\le T.
\end{equation*}
We also set
\begin{equation*}
  \|f\|_s
  = \sup\{\|f(u)\|: 0\le u\le s\},
  \quad 0\le s\le T,
\end{equation*}
and write $|f|_s$ instead of $\|f\|_s$
when $f$ is a scalar function.
The lemma below shows
uniform boundedness and equicontinuity
of the approximation $\hat{X}_N$.

\begin{lemma}\label{x.unif}
For any $\delta>0$ we have
\begin{align*}
  \|\hat{X}_N\|_T
  & \le e^{K_\beta T}(\|x_N\|+\|\beta(0)\|T+2\|\hat{\omega}\|_T);
  \\
  \Delta_\delta\hat{X}_N
  & \le (K_\beta \|\hat{X}_N\|_T
  + \|\beta(0)\|)\delta + \Delta_\delta\hat{\omega} .
\end{align*}
\end{lemma}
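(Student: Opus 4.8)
The plan is to estimate the Euler approximation $\hat{X}_N$ directly from the summation formula~(\ref{x.sum.euler}), using the Lipschitz bound $\|\beta(z) - \beta(w)\| \le K_\beta\|z-w\|$, which gives the linear growth bound $\|\beta(z)\| \le K_\beta\|z\| + \|\beta(0)\|$. First I would fix $s\le T$ and bound $\|\hat{X}_N(s)\|$ by the triangle inequality applied to~(\ref{x.sum.euler}): the noise term contributes $\|\hat{\omega}(s)\| + \|\hat{\omega}(0)\| \le 2\|\hat{\omega}\|_T$, the initial term contributes $\|x_N\|$, and each summand in the drift sum is bounded by $(K_\beta\|\hat{X}_N(s_{k-1})\| + \|\beta(0)\|)(s_k - s_{k-1})$, since the bracketed time increment $[(s-s_{k-1})\wedge(s_k-s_{k-1})]_+$ is at most $s_k - s_{k-1}$ and these increments sum to at most $T$. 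Collecting terms yields
\begin{equation*}
  \|\hat{X}_N(s)\| \le \|x_N\| + \|\beta(0)\|T + 2\|\hat{\omega}\|_T
  + K_\beta\sum_{k=1}^N \|\hat{X}_N(s_{k-1})\|\,[(s-s_{k-1})\wedge(s_k-s_{k-1})]_+ .
\end{equation*}
Taking the supremum over $u\le s$ on the left (noting the right-hand side is nondecreasing in $s$ and already bounds $\|\hat{X}_N\|_s$ after replacing $\|\hat{X}_N(s_{k-1})\|$ by $\|\hat{X}_N\|_{s_{k-1}}$), I obtain an integral-type inequality
\begin{math}
  \|\hat{X}_N\|_s \le C + K_\beta\int_0^s \|\hat{X}_N\|_v\,d\rho_N(v),
\end{math}
where $C = \|x_N\| + \|\beta(0)\|T + 2\|\hat{\omega}\|_T$ and $\rho_N$ is the piecewise-linear measure putting mass $(s_k - s_{k-1})$ appropriately; more simply, one can just run a discrete Grönwall argument directly on the values $\|\hat{X}_N\|_{s_k}$. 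Either way, discrete (or continuous) Grönwall gives $\|\hat{X}_N\|_T \le C\,e^{K_\beta T}$, which is the first bound.

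For the second bound, fix $0\le s < s+u \le T$ with $u\le\delta$ and subtract the two instances of~(\ref{x.sum.euler}): the noise difference is $\|\hat{\omega}(s+u) - \hat{\omega}(s)\| \le \Delta_\delta\hat{\omega}$, and the drift difference is a sum over those mesh subintervals meeting $(s, s+u]$ of terms $\|\beta(\hat{X}_N(s_{k-1}))\|$ times the length of the overlap of $(s,s+u]$ with $(s_{k-1},s_k]$; these overlap lengths sum to exactly $u\le\delta$, and each coefficient is bounded by $K_\beta\|\hat{X}_N\|_T + \|\beta(0)\|$ via the linear growth bound. Hence the drift difference is at most $(K_\beta\|\hat{X}_N\|_T + \|\beta(0)\|)\delta$, and adding the noise contribution and taking the supremum over admissible $s,u$ gives $\Delta_\delta\hat{X}_N \le (K_\beta\|\hat{X}_N\|_T + \|\beta(0)\|)\delta + \Delta_\delta\hat{\omega}$.

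The only mildly delicate point is bookkeeping with the truncated time-increment bracket $[(s-s_{k-1})\wedge(s_k-s_{k-1})]_+$: one must check that, as $s$ ranges in a subinterval and as one forms differences, these quantities behave exactly like the lengths of overlaps of intervals with $(s_{k-1}, s_k]$ and therefore telescope to the expected total lengths ($\le T$ in the first estimate, $= u$ in the second). Once that is observed, both inequalities follow from the triangle inequality, the linear growth of $\beta$, and — for the first — a routine Grönwall step; no stochastic input is needed since $\hat{\omega}$ is merely a fixed continuous path here.
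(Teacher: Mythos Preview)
Your proposal is correct and follows essentially the same route as the paper: the linear growth bound $\|\beta(z)\|\le K_\beta\|z\|+\|\beta(0)\|$ applied to the summation formula~(\ref{x.sum.euler}), a discrete Gr\"onwall step on the grid-point values $\|\hat{X}_N(s_k)\|$ for the first inequality, and a direct difference estimate for the modulus of continuity. The paper merely reverses the order of presentation (it proves the $\Delta_\delta$ bound first) and writes the Gr\"onwall inequality only at the mesh points $s_k$, which is equivalent to what you do since the right-hand side of your pointwise bound depends only on those values.
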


\begin{proof}
Since $\|\beta(\hat{X}_N(s))\|$
is bounded by $K_\beta \|\hat{X}_N\|_T + \|\beta(0)\|$,
by~(\ref{x.sum.euler})
we obtain an upper bound for $\Delta_\delta\hat{X}_N$.
Observe for $k=0,\ldots,N$ that
\begin{equation*}
  \|\hat{X}_N(s_k)\|
  \le \|x_N\| + \|\beta(0)\|T + 2\|\hat{\omega}\|_T
  + K_\beta\sum_{i=0}^{k-1}\|\hat{X}_N(s_i)\|(s_{i+1}-s_i) .
\end{equation*}
Then the upper bound for $\|\hat{X}_N\|_T$
is an immediate consequence of
the following version of discrete Gronwall's inequality:
If $x_k\le \alpha + \sum_{i=0}^{k-1}\gamma_i x_i$
for $k=0,\ldots,N$ with nonnegative $\alpha$ and $\gamma_i$'s
then we have
$x_k\le \alpha\exp\left(\sum_{i=0}^{k-1}\gamma_i\right)$
for $k=0,\ldots,N$.
\end{proof}

Assuming that $x_N$ converges to $x$,
a subsequence of $\{\hat{X}_N\}$
converges uniformly by Ascoli-Arzel\`{a} theorem.
The limiting process $\hat{X}$ implies the existence of
solution to
\begin{equation}\label{c.ie}
  \hat{X}(s) = x
  - \int_0^s \beta(\hat{X}(u))\,du + \hat{\omega}(s)
  - \hat{\omega}(0) .
\end{equation}
Since the solution must be unique,
the whole sequence $\{\hat{X}_N\}$ must converge
uniformly to $\hat{X}$.
Clearly Lemma~\ref{x.unif} holds for $\hat{X}$.

\subsection{Implicit Euler scheme of forward process}

Since the strong solution $\Phi_s(\cdot,\hat{\omega})$
to (\ref{c.sde}) is a diffeomorphism,
we use the backward sample path $\omega(t-u)$, $0\le u\le t$,
and obtain a strong solution
$X(t) = \Phi_t^{-1}(x,\omega(t-\cdot))$ to
\begin{equation}\label{h.sde}
  X(t) = x
  + \int_0^t \beta(X(v))\,dv + \omega(t) - \omega(0) .
\end{equation}

We set the forward time increment
$t_j = T - s_{N-j}$, $j=0,\ldots,N$.
Here we assume a sufficiently small increment $\delta > 0$
(i.e., a sufficiently large $N$)
so that $\phi_u(\cdot,\hat{\omega})$ is diffeomorphic
for every $0\le u< \delta$.
Starting from $X_N(0) = x_N$,
we can formulate an approximation $X_N(t)$ to~(\ref{h.sde})
recursively by
\begin{equation}\label{x.fwd}
X_N(t) =
\phi_{t_j-t}\left(
  \phi_{t_j-t_{j-1}}^{-1}(X_N(t_{j-1}),\omega(t_j-\cdot)),
  \omega(t_j-\cdot)
\right)
\end{equation}
for $t_{j-1}<t \le t_j$, $j=1,\ldots,N$,
as if the time-reversed process
$\hat{X}_N(s) = X_N(T-s)$, $0\le s\le T$,
were generated by~(\ref{x.euler})
with terminal condition $\hat{X}_N(T) = x_N$.
It is equivalently formulated by
\begin{equation}\label{x.sum.fwd}
  X_N(t) = x_N
  + \sum_{j=1}^{N} \beta(X_N(t_j))
  [(t-t_{j-1})\wedge(t_j - t_{j-1})]_+ + \omega(t) - \omega(0),
\end{equation}
and called an \emph{implicit Euler scheme}.

Provided that $x_N$ converges to $x$,
the uniform convergence of $X_N$ is an immediate consequence
to the following lemma.

\begin{lemma}\label{x.unif.fwd}
Let $\delta>0$ be arbitrarily fixed.
For a sufficiently large $N$ we have
\begin{align*}
  \max_{0\le j\le N}\|X_N(t_j)\|
  & \le 2 e^{2K_\beta T}
  (\|x_N\|+\|\beta(0)\|T+2\|\omega\|_T);
  \\
  \|X_N\|_T
  & \le \|x_N\|+2\|\omega\|_T
  +\left(
  \|\beta(0)\|+K_\beta\max_{0\le j\le N}\|X_N(t_j)\|
  \right)T;
  \\
  \Delta_\delta X_N
  & \le \left(
  \|\beta(0)\|+K_\beta\max_{0\le j\le N}\|X_N(t_j)\|
  \right)\delta
  + \Delta_\delta \omega .
\end{align*}
\end{lemma}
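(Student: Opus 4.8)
The plan is to mimic the argument of Lemma~\ref{x.unif}, but now keeping track of the extra difficulty that the implicit Euler scheme~(\ref{x.sum.fwd}) evaluates the drift $\beta$ at the \emph{future} node $t_j$ rather than at the past node. First I would work from the recursion~(\ref{x.fwd}): for $t_{j-1}<t\le t_j$ we have $X_N(t)=\phi_{t_j-t}(X_N(t_j),\omega(t_j-\cdot))$, and in particular evaluating at $t=t_{j-1}$ gives $X_N(t_{j-1})=\phi_{t_j-t_{j-1}}(X_N(t_j),\omega(t_j-\cdot))$, i.e.
\begin{equation*}
  X_N(t_{j-1}) = X_N(t_j) - \beta(X_N(t_j))(t_j-t_{j-1}) + \omega(t_j-t_{j-1}) - \omega(t_j).
\end{equation*}
Rearranging, $\|X_N(t_j)\|\le\|X_N(t_{j-1})\|+\|\beta(X_N(t_j))\|(t_j-t_{j-1})+2\|\omega\|_T$, and bounding $\|\beta(X_N(t_j))\|\le K_\beta\|X_N(t_j)\|+\|\beta(0)\|$ yields
\begin{equation*}
  (1-K_\beta(t_j-t_{j-1}))\|X_N(t_j)\|\le\|X_N(t_{j-1})\|+\|\beta(0)\|(t_j-t_{j-1})+2\|\omega\|_T.
\end{equation*}
This is the place where largeness of $N$ enters: once $\delta<1/(2K_\beta)$ (say) the factor $1-K_\beta(t_j-t_{j-1})\ge\tfrac12$, so $\|X_N(t_j)\|\le 2\|X_N(t_{j-1})\|+2\|\beta(0)\|\delta+4\|\omega\|_T$. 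I would prefer instead to telescope the inequality $\|X_N(t_j)\|\le\|X_N(t_{j-1})\|+2K_\beta(t_j-t_{j-1})\|X_N(t_j)\|+\cdots$ into a discrete Gronwall estimate; unrolling from $\|X_N(t_0)\|=\|x_N\|$ and using $\sum(t_j-t_{j-1})=T$ gives the stated bound $\max_j\|X_N(t_j)\|\le 2e^{2K_\beta T}(\|x_N\|+\|\beta(0)\|T+2\|\omega\|_T)$, the factor $2$ and the doubled exponent $2K_\beta T$ being the price of the implicitness.

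With $\max_j\|X_N(t_j)\|$ controlled, the remaining two bounds follow exactly as in Lemma~\ref{x.unif}. For $\|X_N\|_T$ I would apply~(\ref{x.sum.fwd}) directly: $\|X_N(t)\|\le\|x_N\|+\sum_{j}\|\beta(X_N(t_j))\|(t_j-t_{j-1})+2\|\omega\|_T$, and bound each $\|\beta(X_N(t_j))\|\le\|\beta(0)\|+K_\beta\max_j\|X_N(t_j)\|$; summing the time increments to $T$ gives the claimed estimate. For the modulus of continuity, note that~(\ref{x.sum.fwd}) expresses $X_N(s+u)-X_N(s)$ as $\omega(s+u)-\omega(s)$ plus a sum of drift terms each of which contributes at most $\|\beta(X_N(t_j))\|$ times the length of the overlap of $(t_{j-1},t_j]$ with $(s,s+u]$, and those overlaps total at most $u\le\delta$; hence $\Delta_\delta X_N\le(\|\beta(0)\|+K_\beta\max_j\|X_N(t_j)\|)\delta+\Delta_\delta\omega$.

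The only genuine obstacle is the first step—extracting a usable a~priori bound on $X_N(t_j)$ from an implicit recursion—and it is resolved precisely by demanding $N$ large enough that the per-step drift contraction factor $K_\beta(t_j-t_{j-1})$ is bounded away from $1$; everything downstream is the same bookkeeping as in Lemma~\ref{x.unif}. I should also remark that the hypothesis ``for a sufficiently large $N$'' already appears in the surrounding text (it is needed for $\phi_u(\cdot,\hat\omega)$ to be diffeomorphic on $[0,\delta)$), so no new smallness assumption is introduced. Finally, as noted after Lemma~\ref{x.unif} for the backward scheme, these uniform bounds plus Ascoli--Arzel\`a give a uniformly convergent subsequence whose limit solves~(\ref{h.sde}); uniqueness of the strong solution then forces the whole sequence $\{X_N\}$ to converge uniformly to $X(t)=\Phi_t^{-1}(x,\omega(t-\cdot))$.
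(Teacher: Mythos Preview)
Your proposal is correct and follows essentially the same route as the paper: bound $\|X_N(t_j)\|$ from the implicit sum formula~(\ref{x.sum.fwd}), absorb the implicit $i=j$ term into the left side via $1-K_\beta(t_j-t_{j-1})\ge\tfrac12$ for large $N$, apply the discrete Gronwall inequality, and then read off the bounds on $\|X_N\|_T$ and $\Delta_\delta X_N$ directly from~(\ref{x.sum.fwd}) exactly as you describe. The only cosmetic wrinkle is your intermediate single-step bound with the crude term $2\|\omega\|_T$, which would not telescope to the stated estimate on its own, but you correctly pivot to the cumulative Gronwall argument, which is precisely what the paper does.
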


\begin{proof}
By (\ref{x.sum.fwd}) we obtain for $j=1,\ldots,N$,
\begin{equation*}
  \|X_N(t_j)\|
  \le \|x_N\| + \|\beta(0)\|T + 2\|\omega\|_T
  + K_\beta \sum_{i=1}^j \|X_N(t_i)\|(t_i-t_{i-1})
\end{equation*}
For a sufficiently large $N$
we can find that $[1-K_\beta(t_j-t_{j-1})]\ge 1/2$
for every $j$, and that
\begin{equation*}  \|X_N(t_j)\|
  \le 2\left[
    \|x_N\| + \|\beta(0)\|T + 2\|\omega\|_T
    + K_\beta \sum_{i=1}^{j-1} \|X_N(t_i)\|(t_i-t_{i-1})
  \right]
\end{equation*}
The rest of the proof is completed similarly to that of
Lemma~\ref{x.unif}.
\end{proof}

\subsection{Stochastic processes of inverse image}

By $\mathbb{F}_0$ we denote the space of nonempty closed subsets in
$\mathbb{R}^n$.
Equipped with Fell topology $\mathbb{F}_0$ is a Polish space,
which is also characterized by Painlev\'{e}-Kuratowski
convergence (cf. Appendix~B of \cite{molchanov}).
Let $\{x^*_N\}$ be a sequence in $\mathbb{F}_0$.
The lower limit, denoted by $\liminf x^*_N$, consists of all the
points $x$ such that $x_N\to x$ with $x_N\in x^*_N$.
The upper limit, denoted by $\limsup x^*_N$,
consists of all the limiting points $x$ of some subsequence
$\{x_{N_k}\}$ with $x_{N_k}\in x^*_{N_k}$.
The sequence $\{x^*_N\}$ converges to $x^*$ in the
Painlev\'{e}-Kuratowski sense,
denoted by $x^* = \mbox{{\rm\tiny PK}-}\!\lim x^*_N$,
if $x^* = \liminf x^*_N = \limsup x^*_N$.

An $\mathbb{F}_0$-valued function $F(t)$ is
lower semicontinuous at $t=t_0$ if
$\liminf F(t_N)\supseteq F(t_0)$
for every sequence $\{t_N\}$ converging to $t_0$,
and it is upper semicontinuous at $t=t_0$ if
$\limsup F(t_N)\subseteq F(t_0)$
for each of such sequences
(cf. Appendix~D of \cite{molchanov}).
Then $F(t)$ is continuous
if it is lower and upper semicontinuous.
In the next theorem
we consider a dynamical system $\psi_t$
from $\mathbb{R}^n\times C([0,t],\mathbb{R}^n)$
to $\mathbb{R}^n$,
and introduce a sufficient condition for upper semicontinuity
when the $\mathbb{F}_0$-valued process of inverse image is
constructed.

\begin{definition}\label{consist}
Let $\{s_N\}$ be a decreasing or an increasing sequence in
$\mathbb{R}_+$.
Then we call a dynamical system $\psi_t$ \emph{consistent}
if
$\psi_{s-s_N}(x_N,\hat{\omega}(\cdot+s_N))$
converges to
$\psi_{s-s_0}(x_0,\hat{\omega}(\cdot+s_0))$
for each $s > s_0$
whenever $(s_N,x_N)$ converges to $(s_0,x_0)$.
\end{definition}

\begin{theorem}\label{h.cont}
Let $x^*\in\mathbb{F}_0$ and
$\omega\in C(\mathbb{R}_+,\mathbb{R}^n)$
be fixed,
and let
$\psi_t$ be a consistent dynamical system of
Definition~\ref{consist}.
Assuming that $Y^*(t) = \psi_t^{-1}(x^*,\omega(t-\cdot))$
is nonempty for all $t\ge 0$,
$Y^*(t)$ is an upper semicontinuous 
$\mathbb{F}_0$-valued process.
\end{theorem}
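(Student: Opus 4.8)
The plan is to prove upper semicontinuity directly from the definition: fix $t_0 \ge 0$ and a sequence $t_N \to t_0$, and show $\limsup Y^*(t_N) \subseteq Y^*(t_0)$, i.e. any limit point $y$ of a sequence $y_N \in Y^*(t_N)$ lies in $Y^*(t_0)$. By definition of inverse image, $y_N \in Y^*(t_N)$ means $\psi_{t_N}(y_N, \omega(t_N - \cdot)) \in x^*$. Passing to the convergent subsequence along which $y_N \to y$ (and relabeling), I would invoke the consistency of $\psi_t$ (Definition~\ref{consist}) applied to the sequence $(t_N, y_N) \to (t_0, y)$: taking the ``base time'' $s_0 = 0$ in that definition, consistency gives that $\psi_{t_N - 0}(y_N, \omega(\cdot + 0))$ converges to $\psi_{t_0 - 0}(y, \omega(\cdot + 0))$, i.e.\ $\psi_{t_N}(y_N, \omega) \to \psi_{t_0}(y, \omega)$. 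The only wrinkle is the time-reversal: $Y^*(t)$ is defined with the backward path $\omega(t - \cdot)$ rather than $\omega$ itself, so I must check that the argument of $\psi$ that enters consistency is read correctly. Since $\psi_{t_N}$ only sees the restriction of its path argument to $[0, t_N]$, and $\omega(t_N - \cdot)$ restricted to $[0,t_N]$ is the time-reversal of $\omega$ on $[0, t_N]$, one needs consistency stated (as it is) for the family evaluated on a fixed continuous path; the reversed paths $\omega(t_N - \cdot)$ converge uniformly on compacts to $\omega(t_0 - \cdot)$ because $\omega$ is continuous, so this is absorbed into the same continuity input.

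Granting the convergence $\psi_{t_N}(y_N, \omega(t_N - \cdot)) \to \psi_{t_0}(y, \omega(t_0 - \cdot))$, the conclusion is immediate: each term of the left side lies in the closed set $x^* \in \mathbb{F}_0$, hence so does the limit, giving $\psi_{t_0}(y, \omega(t_0 - \cdot)) \in x^*$, which is exactly $y \in Y^*(t_0)$. This shows $\limsup Y^*(t_N) \subseteq Y^*(t_0)$ for every sequence $t_N \to t_0$, which is the definition of upper semicontinuity of $Y^*$ at $t_0$; since $t_0$ was arbitrary, $Y^*$ is an upper semicontinuous $\mathbb{F}_0$-valued process. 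I should also note at the outset that $Y^*(t)$ takes values in $\mathbb{F}_0$ by hypothesis (nonempty) and is automatically closed as the preimage of the closed set $x^*$ under the continuous-in-first-variable map $y \mapsto \psi_t(y, \omega(t-\cdot))$ (continuity in the state variable being part of a dynamical system via consistency, or directly from Definition~\ref{mds}(b) applied pointwise).

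The main obstacle is bookkeeping around the two distinct roles of time in $Y^*(t) = \psi_t^{-1}(x^*, \omega(t - \cdot))$: the subscript $t$ of $\psi$ moves, and simultaneously the path $\omega(t - \cdot)$ that is fed in also moves with $t$. Definition~\ref{consist} as stated handles exactly a moving subscript with a \emph{fixed} path $\hat\omega$, so the argument must reduce the moving-path issue to uniform convergence of $\omega(t_N - \cdot) \to \omega(t_0 - \cdot)$ on the relevant compact time interval and then combine it with consistency. A clean way to package this is to observe that $\psi_{t_N}(y_N, \omega(t_N - \cdot))$ depends only on $(t_N, y_N)$ and on $\omega$ restricted to $[0, t_0 + 1]$, say, for $N$ large; on that fixed compact interval everything is a jointly continuous function of $(t_N, y_N)$ by consistency plus continuity of $\omega$, so the limit is $\psi_{t_0}(y, \omega(t_0 - \cdot))$. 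No estimate beyond continuity of $\omega$ and the consistency hypothesis is needed; the lower-semicontinuity half is deliberately \emph{not} claimed here, since the inverse image of a closed set under a continuous map need not vary lower-semicontinuously, and indeed the theorem only asserts upper semicontinuity.
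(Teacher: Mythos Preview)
Your overall strategy matches the paper's proof exactly: take $(t_N, y_N) \to (t_0, y)$ with $y_N \in Y^*(t_N)$, argue that $\psi_{t_N}(y_N, \omega(t_N - \cdot)) \to \psi_{t_0}(y, \omega(t_0 - \cdot))$ via consistency, and use closedness of $x^*$ to conclude $y \in Y^*(t_0)$. The paper also notes, as you do, that each $Y^*(t)$ is closed because $y \mapsto \psi_t(y, \omega(t-\cdot))$ is continuous.

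The gap is in how you invoke Definition~\ref{consist}. Your first attempt (``taking the base time $s_0 = 0$'') feeds in the forward path $\omega$ with constant shift $s_N \equiv 0$; but then the subscript $s - s_N$ does not vary with $N$, so the definition yields nothing about $\psi_{t_N}$. Your fallback---reduce the moving-path issue to uniform convergence $\omega(t_N - \cdot) \to \omega(t_0 - \cdot)$ and ``combine with consistency''---would require continuity of $\psi_t$ in its \emph{path} argument, which is nowhere assumed; Definition~\ref{consist} only treats \emph{shifts} $\hat\omega(\cdot + s_N)$ of a single fixed $\hat\omega$, not arbitrary nearby paths.

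The correct substitution, which the paper applies after first passing (without loss of generality) to a monotone subsequence in $t_N$, is to fix $T \ge \sup_N t_N$ and set $\hat\omega(u) = \omega(T - u)$, $s_N = T - t_N$, $s = T$. Then $\hat\omega(\cdot + s_N)(u) = \omega(t_N - u)$ and $s - s_N = t_N$, so Definition~\ref{consist} reads verbatim as $\psi_{t_N}(y_N, \omega(t_N - \cdot)) \to \psi_{t_0}(y, \omega(t_0 - \cdot))$. The time reversal is absorbed into the choice of the fixed path $\hat\omega$, not into a separate path-continuity argument.
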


\begin{proof}
Definition~\ref{consist} implies that
$\psi_t(x,\omega(t-\cdot))$ is continuous in $x$,
and therefore, that
$Y^*(t)$ takes values on $\mathbb{F}_0$.
In order to show upper semicontinuity,
we set a sequence $\{(t_N,z_N)\}$
converging $(t_0,z_0)$
while $\psi_{t_N}(z_N,\omega(t_N-\cdot))\in x^*$.
Without loss of generality
we assume that $\{t_N\}$ is increasing or decreasing.
By Definition~\ref{consist}
we can observe that
\begin{math}
  \lim_{N\to\infty}\psi_{t_N}(z_N,\omega(t_N-\cdot))
  = \psi_{t_0}(z_0,\omega(t_0-\cdot)),
\end{math}
and therefore, that $z_0\in Y^*(t_0)$.
\end{proof}

In the case of strong solution $\Phi_s$
in Section~\ref{euler.scheme}
we obtain a continuous $\mathbb{F}_0$-valued process
of inverse image.

\begin{lemma}\label{ss.consist}
The strong solution $\Phi_s$ to (\ref{c.sde})
is consistent.
\end{lemma}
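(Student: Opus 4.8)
The plan is to verify the convergence requirement in Definition~\ref{consist} directly from the integral equation~(\ref{c.ie}) satisfied by the strong solution, using a Gronwall argument of the same flavor as in the proof of Lemma~\ref{x.unif}. Fix $\hat\omega\in C(\mathbb{R}_+,\mathbb{R}^n)$, a monotone sequence $s_N\to s_0$ in $\mathbb{R}_+$, and $x_N\to x_0$; fix $s>s_0$. Write $\hat{X}^{(N)}(u)=\Phi_{u}(x_N,\hat\omega(\cdot+s_N))$ and $\hat{X}^{(0)}(u)=\Phi_{u}(x_0,\hat\omega(\cdot+s_0))$, so that by~(\ref{c.ie})
\begin{equation*}
  \hat{X}^{(N)}(u) = x_N - \int_0^u\beta(\hat{X}^{(N)}(v))\,dv + \hat\omega(u+s_N)-\hat\omega(s_N),
\end{equation*}
and similarly for $\hat{X}^{(0)}$ with $s_0$ in place of $s_N$. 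The goal is to show $\hat{X}^{(N)}(s-s_N)\to\hat{X}^{(0)}(s-s_0)$.

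First I would control the difference on a fixed compact time interval $[0,L]$ with $L\ge (s-s_0)+1$ (so that $s-s_N\in[0,L]$ for all large $N$). Subtracting the two integral equations gives, for $u\in[0,L]$,
\begin{equation*}
  \|\hat{X}^{(N)}(u)-\hat{X}^{(0)}(u)\|
  \le \|x_N-x_0\| + \eta_N + K_\beta\int_0^u\|\hat{X}^{(N)}(v)-\hat{X}^{(0)}(v)\|\,dv,
\end{equation*}
where $\eta_N = \sup_{0\le u\le L}\|[\hat\omega(u+s_N)-\hat\omega(s_N)] - [\hat\omega(u+s_0)-\hat\omega(s_0)]\|$, which tends to $0$ as $N\to\infty$ by uniform continuity of $\hat\omega$ on the compact set $[0,L+s_0+1]$ and the Lipschitz bound $\|\beta(a)-\beta(b)\|\le K_\beta\|a-b\|$. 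By Gronwall's inequality, $\sup_{0\le u\le L}\|\hat{X}^{(N)}(u)-\hat{X}^{(0)}(u)\| \le (\|x_N-x_0\|+\eta_N)e^{K_\beta L}\to 0$.

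Finally I would combine this uniform bound with equicontinuity of $\hat{X}^{(0)}$ near $u=s-s_0$ (immediate from~(\ref{c.ie}), since $\beta$ is bounded on the relevant compact set and $\hat\omega$ is continuous), writing
\begin{equation*}
  \|\hat{X}^{(N)}(s-s_N)-\hat{X}^{(0)}(s-s_0)\|
  \le \|\hat{X}^{(N)}(s-s_N)-\hat{X}^{(0)}(s-s_N)\| + \|\hat{X}^{(0)}(s-s_N)-\hat{X}^{(0)}(s-s_0)\|,
\end{equation*}
where the first term is bounded by the uniform estimate and the second vanishes because $s-s_N\to s-s_0$ and $\hat{X}^{(0)}$ is continuous. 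This gives $\psi_{s-s_N}(x_N,\hat\omega(\cdot+s_N))\to\psi_{s-s_0}(x_0,\hat\omega(\cdot+s_0))$, which is exactly consistency. I do not expect any genuine obstacle here; the only point requiring a little care is the shift in the driving path, i.e.\ checking that $\hat\omega(\cdot+s_N)\to\hat\omega(\cdot+s_0)$ uniformly on compacts, which is where the hypothesis $\hat\omega\in C(\mathbb{R}_+,\mathbb{R}^n)$ (hence uniformly continuous on compacts) is used.
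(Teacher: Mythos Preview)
Your proof is correct and actually more direct than the paper's. The paper argues by compactness: it shows the family $\hat{X}_N(s)=\Phi_{s-s_N}(x_N,\hat\omega(\cdot+s_N))$ on $[s_0\vee s_1,T]$ is uniformly bounded and equicontinuous, extracts a uniformly convergent subsequence via Ascoli--Arzel\`a, identifies the limit as the unique solution of~(\ref{c.ie}) started from $(s_0,x_0)$, and then invokes uniqueness to conclude the whole sequence converges. You bypass the compactness step entirely by subtracting the two integral equations and applying Gronwall directly, which yields an explicit bound $(\|x_N-x_0\|+\eta_N)e^{K_\beta L}$ and hence a rate of convergence the paper's argument does not provide. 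The paper's route has the advantage of being the same template it reuses later (e.g.\ in Proposition~\ref{y.dag.conv} and Lemma~\ref{psi.cont}), where a direct Gronwall estimate is not available and one genuinely needs compactness plus identification of the limit; your route is shorter here precisely because the global Lipschitz constant $K_\beta$ is already in hand.
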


\begin{proof}
Suppose that $(s_N,x_N)$ converges to $(s_0,x_0)$
as in Definition~\ref{consist},
and that $T > s_0$ is arbitrarily fixed.
Then the strong solution
$\hat{X}_N(s) = \Phi_{s-s_N}(x_N,\hat{\omega}(\cdot+s_N))$,
$s_N\le s\le T$, satisfies
\begin{equation*}
  \hat{X}_N(s) = x_N
  - \int_{s_N}^s \beta(\hat{X}_N(u))\,du + \hat{\omega}(s)
  - \hat{\omega}(s_N) ,
\end{equation*}
and a sequence $\{\hat{X}_N(s)\}$
of processes on $[s_0\vee s_1, T]$
is uniformly bounded and equicontinuous for each $N$.
By extending $\hat{X}_N(s)$ to a continuous process on $(s_0,T]$
as necessary, we can find a subsequence
$\{\hat{X}_{N_i}(s)\}$ which converges to $\hat{X}(s)$
uniformly on $(s_0,T]$.
Since $\hat{X}(s)$ must satisfy
\begin{equation*}
  \hat{X}(s) = x_0
  - \int_{s_0}^s \beta(\hat{X}(u))\,du + \hat{\omega}(s)
  - \hat{\omega}(s_0) ,
\end{equation*}
we obtain
$\hat{X}(s) = \Phi_{s-s_0}(x_0,\hat{\omega}(\cdot+s_0))$.
The uniqueness of integral equation implies that
the whole sequence $\{\hat{X}_N(s)\}$ must converge
to $\hat{X}(s)$ uniformly on $(s_0,T]$.
\end{proof}

\begin{proposition}\label{ss.cont}
Let $x^*\in\mathbb{F}_0$ and
$\omega\in C(\mathbb{R}_+,\mathbb{R}^n)$
be fixed.
Then $Y^*(t) = \Phi_t^{-1}(x^*,\omega(t-\cdot))$
is a continuous 
$\mathbb{F}_0$-valued process.
\end{proposition}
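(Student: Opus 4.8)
The plan is to split the assertion into upper and lower semicontinuity, obtain the former directly from the general machinery already in place, and prove the latter by hand, exploiting that the strong solution is a diffeomorphism in its spatial argument. First I would record that $Y^*(t)\in\mathbb{F}_0$ for every $t\ge 0$: since $\Phi_t(\cdot,\omega(t-\cdot))$ is a diffeomorphism of $\mathbb{R}^n$ onto itself, the set $Y^*(t)=\{z:\Phi_t(z,\omega(t-\cdot))\in x^*\}$ is the preimage of the nonempty closed set $x^*$ under a continuous surjection, hence nonempty and closed. With nonemptiness secured, Lemma~\ref{ss.consist} says that $\Phi_s$ is a consistent dynamical system, so Theorem~\ref{h.cont} applies with $\psi_t=\Phi_t$ and yields that $Y^*(t)$ is upper semicontinuous.

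It then remains to establish lower semicontinuity, that is, $\liminf Y^*(t_N)\supseteq Y^*(t_0)$ whenever $t_N\to t_0$. Fix $z_0\in Y^*(t_0)$ and put $y_0=\Phi_{t_0}(z_0,\omega(t_0-\cdot))$, which lies in $x^*$; applying the inverse gives $z_0=\Phi_{t_0}^{-1}(y_0,\omega(t_0-\cdot))$. I would take as the approximating sequence $z_N=\Phi_{t_N}^{-1}(y_0,\omega(t_N-\cdot))$, each of which belongs to $Y^*(t_N)$ since $\Phi_{t_N}(z_N,\omega(t_N-\cdot))=y_0\in x^*$. To see $z_N\to z_0$ I would invoke the forward representation established above~(\ref{h.sde}): the map $t\mapsto\Phi_t^{-1}(y_0,\omega(t-\cdot))$ coincides with the strong solution $X(t)$ of~(\ref{h.sde}) with $X(0)=y_0$, and this $X$ is a continuous function of $t$ on $[0,\infty)$. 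Since $X(t_0)=\Phi_{t_0}^{-1}(y_0,\omega(t_0-\cdot))=z_0$, continuity gives $z_N=X(t_N)\to X(t_0)=z_0$, so $z_0\in\liminf Y^*(t_N)$, as required.

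Combining the two one-sided properties shows that $Y^*(t)$ is a continuous $\mathbb{F}_0$-valued process, which is the assertion. The only step that genuinely uses the structure of the strong solution is the lower-semicontinuity argument, and the point to handle with care there is the passage from the family of pointwise inverses $\Phi_t^{-1}(y_0,\omega(t-\cdot))$ to a single trajectory $X(t)$ that is continuous in $t$; everything else reduces to Lemma~\ref{ss.consist}, Theorem~\ref{h.cont}, and the diffeomorphism property of $\Phi_t(\cdot,\omega(t-\cdot))$.
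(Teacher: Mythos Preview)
Your proof is correct and follows essentially the same route as the paper: upper semicontinuity via Lemma~\ref{ss.consist} and Theorem~\ref{h.cont}, and lower semicontinuity by selecting a continuous trajectory $X(t)\in Y^*(t)$ through the given point. The only cosmetic difference is that the paper writes the selection piecewise, centered at $t_0$ (running $\Phi_{t_0-t}$ backward for $t<t_0$ and $\Phi_{t-t_0}^{-1}$ forward for $t\ge t_0$), whereas you anchor the same trajectory at $t=0$ via $X(t)=\Phi_t^{-1}(y_0,\omega(t-\cdot))$ with $y_0=\Phi_{t_0}(z_0,\omega(t_0-\cdot))\in x^*$; these are the same curve, so the arguments coincide.
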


\begin{proof}
By Theorem~\ref{h.cont} and Lemma~\ref{ss.consist} $Y^*(t)$ is upper
semicontinuous.
The proof of lower semicontinuity is naturally related to a
selection of continuous process $X(t)\in Y^*(t)$
(cf. Chapter~9 of~\cite{aubin90}).
Suppose that $x_0\in Y^*(t_0)$,
and that a sequence $\{t_N\}$ converges to $t_0$.
Then we can construct a strong solution
\begin{equation*}
  X(t) = \begin{cases}
    \Phi_{t_0-t}(x_0,\omega(t_0-\cdot))
    & \mbox{ if $0\le t< t_0$; } \\
    \Phi_{t-t_0}^{-1}(x_0,\omega(t-\cdot))
    & \mbox{ if $t_0\le t< \infty$; }
  \end{cases}
\end{equation*}
to (\ref{h.sde}) satisfying $X(t_0) = x_0$.
Thus, $x_N = X(t_N) \in Y^*(t_N)$ converges to $x_0$,
and therefore, $Y^*(t)$ is lower semicontinuous.
\end{proof}

A process $Y^*(t)$ of Proposition~\ref{ss.cont}
is viewed as a Markov process constructed by
Markov dynamical system
$\Phi_t^{-1}(x^*,\omega(t-\cdot))$.
An approximation $Y^*_N(t)$ to $Y^*(t)$
starts with $Y^*_N(0) = Y^*(0)$
and is recursively updated by
\begin{equation}\label{h.euler}
Y^*_N(t) =
\phi_{t_j-t}\left(
  \phi_{t_j-t_{j-1}}^{-1}(Y^*_N(t_{j-1}),\omega(t_j-\cdot)),
  \omega(t_j-\cdot)
\right)
\end{equation}
for $t_{j-1}<t \le t_j$, $j=1,\ldots,N$.
Then
it can be viewed as the collection of
approximations of (\ref{x.fwd})
starting from $X_N(0)\in Y^*(0)$.
It can be verified that
$Y^*(t) = \mbox{{\rm\tiny PK}-}\!\lim_{N\to\infty} Y^*_N(t)$.

\section{Stochastic processes of hypographical surface}
\label{hypo.sec}

By $X_i$ or $X_{i,N}$
we denote the $i$-th coordinate of $X$ or $X_N$,
and by $X_{(-i)}$ or $X_{(-i),N}$
we denote the $(n-1)$-dimensional vector of $X$ or $X_N$,
by deleting the $i$-th coordinate.
Assuming $n \ge 2$,
a closed subset $x^*$ of $\mathbb{R}^n$
is said to be \emph{hypographic} at the direction of
$i$-th coordinate if there exists a unique upper semi-continuous function
$h$ from $\mathbb{R}^{n-1}$ to $\mathbb{R}$ satisfying
\begin{math}
  x^* = \{(x_i;x_{(-i)}):
  x_i\le h(x_{(-i)}),\,x_{(-i)}\in\mathbb{R}^{n-1}\} .
\end{math}
We call a hypographic closed subset $x^*$
``Lipschitz-continuous''
if the corresponding function $h$
of hypograph is Lipschitz-continuous.
The boundary $\partial x^*$ of hypographic closed set
uniquely determines the hypographical surface
\begin{equation}\label{graph}
  \partial x^*
  = \{x\in\mathbb{R}^n: x_i = h(x_{(-i)})\} .
\end{equation}
Thus, 
we denote by $\partial x^*(\cdot)$
the corresponding function $h(\cdot)$ in (\ref{graph}),
though it is a slight abuse of notation.

\begin{definition}\label{y.star.def}
We denote by $\mathbb{F}_1$ a subclass of
Lipschitz-continuous hypographic closed sets
at the direction of first coordinate,
and assume that a process
$Y^*(t) = \Phi^{-1}_t(x^*,\omega(t-\cdot))$ of inverse image
takes values on $\mathbb{F}_1$.
Then the corresponding function of hypographical surface
at each $t$, denote by $\partial Y^*(t,\cdot)$,
is called a \emph{stochastic process of hypographical surface}
if for each $T\ge 0$
there exists a Lipschitz constant universally for
the collection $\{\partial Y^*(t,\cdot)\}_{0\le t\le T}$
regardless of $\omega\in C([0,T],\mathbb{R}^n)$.
\end{definition}

\subsection{Examples of hypographical surface}
\label{m1.sec}

A trivial example of hypographical surfaces is given by
\begin{equation*}
  \mathbb{F}_1 = \left\{
  \mbox{ $x^*$: hypographic with constant $\partial x^*(\cdot)$ }
  \right\}
\end{equation*}
if $n=1$ or stochastic processes
$X_1$ and $X_{(-1)}$ are independent.

\begin{example}\label{r1.ex}
Suppose that $n=1$ or 
an invariant function $\nu$ is formulated by
$\nu(x)=\nu_1(x_1)\nu_2(x_{(-1)})$.
Then the drift coefficient $\beta_1(x)$ is a function of $x_1$,
and the hypographical surface $\partial Y^*(t,\cdot)\equiv Y_1(t)$
is determined by
$dY_1(t) = \beta_1(Y_1(t))dt + dW_1(t)$.
\end{example}

In general
neither the initial condition of $Y^*(0)\in \mathbb{F}_1$
nor the Lipschitz-continuity of $\beta$
ensures that a closed set-valued process $Y^*(t)$
remains on a subclass $\mathbb{F}_1$ of hypographic closed sets.

\begin{example}\label{r2.ex}
Let
$\nu(x) = e^{-2x_1 x_2}$ be an invariant function on
$\mathbb{R}^2$.
Then we can find the drift coefficient
$\beta(x) = [x_2, x_1]^T$,
and set
\begin{equation*}
  \mathbb{F}_1 = \left\{
  \mbox{ $x^*$: hypographic with
    $\partial x^*(x_2) = (\tanh\theta)x_2 + \eta$
    for $\theta,\eta\in\mathbb{R}$
  }
  \right\} .
\end{equation*}
A direction $U(t) = [\sinh(\theta+t),\cosh(\theta+t)]$
of the line $\partial Y^*(t)$ satisfies
$\frac{dU}{dt} = \beta(U)$ if
$U(0) = [\sinh\theta,\cosh\theta]$.
Thus, given an initial direction $U(0)$ and
a point $Y(0)$ on the line $\partial Y^*(0)$,
the hypographical surface
\begin{math}
  \partial Y^*(t,x_2)
  = (U_1(t)/U_2(t))(x_2-Y_2(t))+Y_1(t)
\end{math}
is determined by
$dU(t) = \beta(U(t))dt$
and $dY(t) = \beta(Y(t))dt + dW(t)$.
\end{example}

In the next example
we view $D = \mathbb{R}^n$ as a parameter space,
and consider a linear regression
$\langle a, x\rangle = \sum_{i=1}^n a_i x_i$
with vector $a = [a_1,\ldots,a_n]^T$ of explanatory variables.
By $S(\theta) = 1/(1 + e^{-\theta})$
we denote the logistic sigmoid function.
Then we can generate a binary output $b = 0$ or $1$
according to the probability
$S((2b-1)\langle a, x\rangle)$,
and call it
a \emph{Bernoulli-logistic regression model}.
In the neural network terminology
this is a unit perceptron
with input $a$ and weight vector $x$.
Provided a training data set
$\{(a^{(1)},b^{(1)}),$ $\ldots,$ $(a^{(N)},b^{(N)})\}$
consisting of $N$ input-output pairs,
we can construct the likelihood function
\begin{align*}
  \nu(x)
  & = \prod_{j=1}^N S((2b^{(j)}-1)\langle a^{(j)}, x\rangle)
  \\
  & = \exp\left(
  \sum_{j=1}^N b^{(j)}\langle a^{(j)}, x\rangle
  \right)
  \times\prod_{j=1}^N S(-\langle a^{(j)}, x\rangle)
\end{align*}
by applying
\begin{math}
  S(\langle a, x\rangle)^b
  S(-\langle a, x\rangle)^{1 - b}
  = \exp(b \langle a, x\rangle)
  S(-\langle a, x\rangle) .
\end{math}

\begin{example}\label{b.model}
We can consider the above likelihood function
as an invariant function,
and obtain the drift coefficient
\begin{equation*}
\beta(x)
= \frac{1}{2}\sum_{j=1}^N
a^{(j)}\left[
S(\langle a^{(j)}, x\rangle) - b^{(j)}
\right] .
\end{equation*}
Suppose that the input vectors
$a^{(1)},\ldots,a^{(N)}$ span an $(n-1)$-dimensional subspace $H$,
and that a unit normal vector $d$ to the subspace $H$
has a positive component $d_1 = \cos\theta_0$
to the first coordinate.
Furthermore, it is assumed that
\begin{equation}\label{nu.finite}
  \sup_{x\in H,\|x\|=1}\min_{j=1,\ldots,N}
  (2b^{(j)}-1)\langle a^{(j)}, x\rangle
  < 0
\end{equation}
so that $\int_H\nu(x)dx < \infty$;
thus, in Bayesian viewpoint
the function $\nu(x)$ on the subspace $H$
is proportional to the posterior density function $\nu_H(x)$
given the flat prior.
Here we can choose $\theta_0<\theta\le\pi/2$,
and introduce a subclass $\mathbb{F}_1$ of hypographic closed subsets
$x^*$ satisfying
\begin{math}
  |\langle d,x-y\rangle|
  \le \|x-y\||\cos\theta|
\end{math}
for any $x,y\in\partial x^*$.
A Lipschitz constant for $\partial x^*$ in $\mathbb{F}_1$
is bounded by
\begin{math}
  (\|d_{(-1)}\|+\cos\theta)/(d_1-\cos\theta) .
\end{math}
Particularly we have
\begin{equation*}
  \mathbb{F}_1 = \{x^*:
  \mbox{ hypographic with $\partial x^* = H + c$
    with some $c\in\mathbb{R}$ } \}
\end{equation*}
if $\theta=\pi/2$.
\end{example}

In Example~\ref{b.model}
we can construct a process 
$Y^*(t) = \Phi^{-1}_t(x^*,\omega(t-\cdot))$
starting from $Y^*(0)=x^*\in\mathbb{F}_1$.
Consider two distinct paths
$X(t) = \Phi^{-1}_t(X(0),\omega(t-\cdot))$
and $Y(t) = \Phi^{-1}_t(Y(0),\omega(t-\cdot))$
on $\partial Y^*(t)$.
Then we can introduce the difference
$z(t) = X(t) - Y(t) = \alpha(t) d + h(t)$
in the coordinate system with the vector $d$
and the subspace $H$
by setting
$\alpha(t)=\langle d,z(t)\rangle$
and $h(t)=z(t)-\alpha(t) d$.
Observe that
$z(t)$ is a solution to the differential equation
\begin{equation*}
  \frac{dz}{dt}
  = \frac{1}{2}\sum_{j=1}^N a^{(j)}\left[
    S(\langle a^{(j)}, z+X(t)\rangle)
    - S(\langle a^{(j)}, X(t)\rangle)
    \right] ,
\end{equation*}
and therefore, that
$\alpha(t) \equiv \alpha(0)$
and $\|h(t)\|$ is increasing.
Thus, we obtain
$|\alpha(t)|\le\|z(t)\| |\cos\theta|$,
which implies that $Y^*(t)\in\mathbb{F}_1$.

\subsection{A coupled approximation of backward process}
\label{y.cons.sec}

By $\phi_{i,s}$ or $\hat{\omega}_i$
we denote the $i$-th coordinate of $\phi_s$ or $\hat{\omega}$,
and by $\phi_{(-i),s}$ or $\hat{\omega}_{(-i)}$
the $(n-1)$-dimensional vector
of $\phi_s$ or $\hat{\omega}$ by deleting the $i$-th coordinate.
Since the maps $\phi_{i,s}(\cdot,\hat{\omega})$
and $\phi_{(-i),s}(\cdot,\hat{\omega})$
from~(\ref{phi.euler}) are determined respectively by $\hat{\omega}_i$
and $\hat{\omega}_{(-i)}$,
we can simply write
$\phi_{i,s}(\cdot,\hat{\omega}_i)$
and $\phi_{(-i),s}(\cdot,\hat{\omega}_{(-i)})$.
By $\hat{\omega}'$ we denote the $n$-dimensional sample path
\begin{equation}\label{hat.w'}
  \hat{\omega}'(s) = [-\hat{\omega}_1(s),\hat{\omega}_{(-1)}(s)]
\end{equation}
by changing the sign to the path $\hat{\omega}_1(s)$
of the first coordinate.

Let $T>0$ be fixed,
and let $\partial Y^*_N(t,\cdot)$, $0\le t\le T$,
be an approximated process of hypographical surface by~(\ref{h.euler})
with sample path $\omega'$ starting from
$\partial Y^*_N(0,\cdot) = \partial y^*(\cdot)$.
Then we set $\hat{\omega}(s) = \omega(T-s) - \omega(T)$,
and view $\hat{Y}^*_N(s) = Y^*_N(T-s)$, $0\le s\le T$,
as the time-reversed approximation by
\begin{equation}\label{y.hat.star}
  \hat{Y}^*_N(s)
  = \phi_{s-s_{k-1}}(\hat{Y}^*_N(s_{k-1}),
  \hat{\omega}'(\cdot+s_{k-1}))
\end{equation}
for $s_{k-1}\le s< s_k$, $k=1,\ldots,N$.
Furthermore,
we can build a backward process $\hat{Y}_N(s)$ of Algorithm~\ref{y.cons.alg}.

\begin{algorithm}\label{y.cons.alg}
Set $\hat{Y}_N(0) = x_N$ and $\hat{\sigma}_N(0) = 0$.
Provided $\hat{Y}_N(s_{k-1})$ and $\hat{\sigma}_N(s_{k-1})$,  
we can construct $\hat{Y}_N(s)$ and $\hat{\sigma}_N(s)$
recursively for $s_{k-1}< s\le s_k$, $k=1,\ldots,N$,
in the following steps:
(i)
Update $\hat{Y}_{(-1),N}(s)$ for $s_{k-1}< s\le s_k$ by
\begin{equation}\label{y-1.sigma}
  \hat{Y}_{(-1),N}(s)
  = \phi_{(-1),s-s_{k-1}}
  (\hat{Y}_N(s_{k-1}),
  \hat{\omega}_{(-1)}(\cdot+s_{k-1})) .
\end{equation}
(ii)
Update $\hat{\sigma}_N(s)$ for $s_{k-1}< s\le s_k$ by
\begin{equation}\label{s.out}
  \hat{\sigma}_N(s)
  = \hat{\sigma}_N(s_{k-1}) + 2\left(
    \hat{\omega}_1(s) - \hat{\omega}_1(s_{k-1})
  \right)
\end{equation}
if $\hat{Y}_N(s_{k-1})$ and $\hat{Y}_{(-1),N}(s_k)$ satisfy
\begin{align}\label{y.out}
  & \hat{Y}_{1,N}(s_{k-1})
  - \beta_1(\hat{Y}_N(s_{k-1}))(s_k-s_{k-1})
  + |\hat{\omega}_1(s_k) - \hat{\omega}_1(s_{k-1})|
  \\ \nonumber & \hspace{5ex}
  > \partial\hat{Y}^*_N(s_k,\hat{Y}_{(-1),N}(s_k));
\end{align}
otherwise, set
$\hat{\sigma}_N(s) \equiv \hat{\sigma}_N(s_{k-1})$.
(iii)
Complete the update of $\hat{Y}_N(s)$ by setting
\begin{equation}\label{y1.sigma}
  \hat{Y}_{1,N}(s)
  = \phi_{1,s-s_{k-1}}
  (\hat{Y}_N(s_{k-1}),
  (\hat{\omega}_1-\hat{\sigma}_N)(\cdot+s_{k-1}))
\end{equation}
for $s_{k-1}< s\le s_k$.
\end{algorithm}

Algorithm~\ref{y.cons.alg} constructs an approximation
$\hat{Y}_N(s)$ by (\ref{x.euler})
with backward sample path
\begin{equation}\label{w.sigma}
  \hat{\omega}^{\hat{\sigma}_N}(s)
  = [(\hat{\omega}_1-\hat{\sigma}_N)(s),
    \hat{\omega}_{(-1)}(s)],
  \quad 0\le s\le T,
\end{equation}
and couple it with $\hat{Y}^*_N(s)$ recursively
in such a way that
\begin{equation}\label{y.in}
  \hat{Y}_{1,N}(s_k) \le\partial\hat{Y}^*_N(s_k,\hat{Y}_{(-1),N}(s_k))
\end{equation}
for all $k=0,\ldots,N$ if
$x_N\in\hat{Y}^*_N(0)$.
Here we find $\hat{\sigma}_N(s_k)$ updated by (\ref{s.out})
only when
$\hat{\omega}_1(s_k) - \hat{\omega}_1(s_{k-1}) > 0$
holds; thus,
by~(\ref{y.out}) we have
\begin{equation}\label{y.at.k}
  \partial\hat{Y}^*_N(s_k,\hat{Y}_{(-1),N}(s_k))
  - 2(\hat{\omega}_1(s_k) - \hat{\omega}_1(s_{k-1}))
  < \hat{Y}_{1,N}(s_k)
\end{equation}
at the $k$-th update by (\ref{s.out}).

\begin{lemma}\label{weak.w}
If the sample path $\hat{\omega}$ of Algorithm~\ref{y.cons.alg}
is distributed as
$\mathbb{W}$ on $C([0,T],\mathbb{R}^n)$
then so is
$\hat{\omega}^{\hat{\sigma}_N}$ of~(\ref{w.sigma}).
\end{lemma}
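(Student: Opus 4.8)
The plan is to reduce the assertion to the joint behaviour of $\hat\omega$ and $\hat\omega^{\hat\sigma_N}$ at the mesh points $0=s_0<s_1<\dots<s_N=T$, and then to recognise the passage $\hat\omega\mapsto\hat\omega^{\hat\sigma_N}$ as a rearrangement of the first coordinate governed by a bijection of sign patterns. First I would record the structure of the transformation: $\hat\omega^{\hat\sigma_N}$ coincides with $\hat\omega$ in the last $n-1$ coordinates, and on each interval $[s_{k-1},s_k]$ its first-coordinate sub-path $\hat\omega^{\hat\sigma_N}_1(s_{k-1}+\cdot)-\hat\omega^{\hat\sigma_N}_1(s_{k-1})$ equals $\epsilon_k$ times $\hat\omega_1(s_{k-1}+\cdot)-\hat\omega_1(s_{k-1})$, where $\epsilon_k\in\{-1,+1\}$ is the reflection flag of Algorithm~\ref{y.cons.alg} ($\epsilon_k=-1$ exactly when (\ref{y.out}) holds and $\hat\sigma_N$ is updated by (\ref{s.out}) on that interval, $\epsilon_k=+1$ otherwise). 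With $\xi_k=\hat\omega_1(s_k)-\hat\omega_1(s_{k-1})$ and $\eta_k=\hat\omega^{\hat\sigma_N}_1(s_k)-\hat\omega^{\hat\sigma_N}_1(s_{k-1})=\epsilon_k\xi_k$ we have $|\eta_k|=|\xi_k|$, and, since $\hat\omega_{(-1)}$ together with $\xi_1,\dots,\xi_N$ determines every $\epsilon_k$, the first-coordinate path of $\hat\omega^{\hat\sigma_N}$, conditionally on $\hat\omega_{(-1)}$ and $\xi_1,\dots,\xi_N$, is a concatenation of independent Brownian bridges with endpoints $\eta_1,\dots,\eta_N$ — the very same conditional structure that $\hat\omega_1$ has given $\xi_1,\dots,\xi_N$. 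Hence it suffices to prove that $(\eta_1,\dots,\eta_N)$, jointly with $\hat\omega_{(-1)}$, has the same law as $(\xi_1,\dots,\xi_N)$ jointly with $\hat\omega_{(-1)}$; then $\hat\omega^{\hat\sigma_N}\overset{d}{=}\hat\omega$, which is $\mathbb{W}$ by hypothesis.

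Next I would determine precisely how $\epsilon_k$ depends on the first-coordinate increments. Because $\hat Y_N$ is built forward in $s$ from $\hat Y_N(0)=x_N$ by (\ref{y1.sigma}), the value $\hat Y_N(s_{k-1})$, and hence $\hat Y_{(-1),N}(s_k)$ through (\ref{y-1.sigma}), is a measurable function of $\eta_1,\dots,\eta_{k-1}$ and of $\hat\omega_{(-1)}$; and $\hat\omega_1$ on $[s_{k-1},s_k]$ enters (\ref{y.out}) only through $|\hat\omega_1(s_k)-\hat\omega_1(s_{k-1})|=|\xi_k|$. By contrast $\hat Y^*_N$ is the time-reversal $Y^*_N(T-\cdot)$ of the forward-built surface process, so the Euler recursion (\ref{y.hat.star}) carries its fixed datum $\partial y^*(\cdot)$ at the terminal time $s=T$ and is resolved downward; consequently the hypographic set $\hat Y^*_N(s_k)$, hence the height $\partial\hat Y^*_N(s_k,\cdot)$, depends on $\hat\omega$ only through its increments on $[s_k,T]$, i.e.\ through $\xi_{k+1},\dots,\xi_N$ and $\hat\omega_{(-1)}$ on $[s_k,T]$. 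Putting these together, $\epsilon_k$ is a measurable function of $\operatorname{sgn}\eta_1,\dots,\operatorname{sgn}\eta_{k-1}$ and $\operatorname{sgn}\xi_{k+1},\dots,\operatorname{sgn}\xi_N$, with $|\xi_1|,\dots,|\xi_N|$ and $\hat\omega_{(-1)}$ entering only as parameters; in particular $\epsilon_k$ does not depend on $\operatorname{sgn}\xi_k$, and $\operatorname{sgn}\eta_k=\epsilon_k\operatorname{sgn}\xi_k$.

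To conclude I would condition on $\hat\omega_{(-1)}$ and on $|\xi_1|,\dots,|\xi_N|$; then $\operatorname{sgn}\xi_1,\dots,\operatorname{sgn}\xi_N$ are i.i.d.\ uniform on $\{-1,+1\}$ and independent of the conditioning variables. With the parameters frozen, the map $\Sigma\colon(\operatorname{sgn}\xi_k)_{k=1}^N\mapsto(\operatorname{sgn}\eta_k)_{k=1}^N$ is well defined by forward recursion (compute $\epsilon_1,\operatorname{sgn}\eta_1,\epsilon_2,\operatorname{sgn}\eta_2,\dots$ in turn; at step $k$ only $\operatorname{sgn}\eta_j$ for $j<k$, already known, and $\operatorname{sgn}\xi_j$ for $j>k$, given, are used), and it is a bijection of $\{-1,+1\}^N$: its inverse is computed by backward recursion $k=N,N-1,\dots,1$, since at that stage $\epsilon_k$ uses only $\operatorname{sgn}\eta_j$ for $j<k$ (given) and $\operatorname{sgn}\xi_j$ for $j>k$ (already recovered), whence $\operatorname{sgn}\xi_k=\epsilon_k\operatorname{sgn}\eta_k$. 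Since a bijection preserves the uniform distribution on $\{-1,+1\}^N$, conditionally on the parameters $(\operatorname{sgn}\eta_k)_{k=1}^N$ is again i.i.d.\ uniform and independent of them; combined with $|\eta_k|=|\xi_k|$ this yields $(\eta_1,\dots,\eta_N,\hat\omega_{(-1)})\overset{d}{=}(\xi_1,\dots,\xi_N,\hat\omega_{(-1)})$, and the reduction of the first paragraph yields $\hat\omega^{\hat\sigma_N}\overset{d}{=}\hat\omega\sim\mathbb{W}$.

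The step I expect to be the main obstacle is the dependence claim of the second paragraph: that $\hat Y^*_N(s_k)$ is unaffected by the first-coordinate increments over $[0,s_k]$ and sees only the future ones. This is exactly what makes $\epsilon_k$ invariant under a flip of $\operatorname{sgn}\xi_k$ and $\Sigma$ a bijection, and it rests on $\hat Y^*_N$ carrying its boundary value at the terminal time $s=T$ together with the occurrence of the absolute value $|\hat\omega_1(s_k)-\hat\omega_1(s_{k-1})|$, rather than the signed increment, in (\ref{y.out}). Making it rigorous also requires checking that, for the (fixed) mesh, the maps $\phi_u(\cdot,\hat\omega)$ of (\ref{phi.euler}) are diffeomorphic and the iterates stay hypographic, so that (\ref{y.hat.star}) resolved downward — as well as $\Sigma$ and $\Sigma^{-1}$ — are everywhere well defined.
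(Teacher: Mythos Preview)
Your argument is correct and rests on the same key observation as the paper's: the flip flag $\epsilon_k$ is determined by the modified increments $\eta_1,\ldots,\eta_{k-1}$ (through $\hat Y_N(s_{k-1})$), by the original increments $\xi_{k+1},\ldots,\xi_N$ (through $\hat Y^*_N(s_k)$, which is $Y^*_N(T-s_k)$ built forward from the terminal datum $y^*$ using only the ``future'' of $\hat\omega$), and on $[s_{k-1},s_k]$ only through $|\xi_k|$. The paper packages this differently: it introduces interpolating paths $\hat\omega(\cdot;k)$ with $\hat\omega(\cdot;0)=\hat\omega$ and $\hat\omega(\cdot;N)=\hat\omega^{\hat\sigma_N}$, and proves by induction on $k$ that each is $\mathbb{W}$-distributed; at step $k$ one conditions on $\hat\omega(\cdot;k-1)$ restricted to $[0,s_{k-1}]$ and to $[s_k,T]$ together with the modulus and $(n-1)$-vector of the increment on $[s_{k-1},s_k]$, and then the (possibly reflected) first-coordinate Brownian increment on $[s_{k-1},s_k]$ is again Brownian and independent. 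Your route conditions once on $\hat\omega_{(-1)}$ and on all moduli $|\xi_k|$, reduces to the uniform law on $\{\pm1\}^N$, and recognises $(\operatorname{sgn}\xi_k)\mapsto(\operatorname{sgn}\eta_k)$ as a bijection via the past-$\eta$/future-$\xi$ triangular structure, with the Brownian-bridge decomposition handling the full path between mesh points. This makes the combinatorial core explicit and does all $N$ steps at once; the paper's induction is more pedestrian but avoids the bridge decomposition entirely. The dependence concern you flag in your last paragraph---that $\hat Y^*_N(s_k)$ sees only the increments on $[s_k,T]$---is precisely what the paper invokes when it writes that $Y^*_N(T-s_k)$ is obtained from $y^*$ and the sample path on $[0,T-s_k]$, i.e.\ from $\hat\omega$ on $[s_k,T]$.
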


\begin{proof}
We set for each $k=0,\ldots,N$
\begin{equation*}
  \hat{\omega}(s;k) = \begin{cases}
    \hat{\omega}^{\hat{\sigma}_N}(s) & \mbox{ if $0\le s\le s_k$; } \\
  [\hat{\omega}_1(s)-\hat{\sigma}_N(s_k),\hat{\omega}_{(-1)}(s)]
    & \mbox{ if $s_k< s\le T$. }
  \end{cases}
\end{equation*}
Observe that $\hat{\omega}(\cdot;0) = \hat{\omega}$
and $\hat{\omega}(\cdot;N) = \hat{\omega}^{\hat{\sigma}_N}$.
Then we can prove by induction that
$\hat{\omega}(\cdot;k)$ is distributed as $\mathbb{W}$
for each $k=1,\ldots,N$.

Suppose that $\hat{\omega}(\cdot;k-1)$ is distributed as
$\mathbb{W}$.
Then we obtain (i) $\hat{Y}_N(s_{k-1})$ from the initial state $x_N$
and the sample path $\hat{\omega}(s;k-1)$ on $[0,s_{k-1}]$,
and (ii) $Y^*_N(T-s_k)$ from the initial state $y^*$ and the sample path
$\omega(t) = \hat{\omega}(T-t;k-1)$ on $[0,T-s_k]$.
Having obtained $\hat{Y}_N(s_{k-1})$
and $\hat{Y}^*_N(s_k) = Y^*_N(T-s_k)$, 
we can determine whether (\ref{y.out}) holds or not
by the length
$|\hat{\omega}_1(s_k;k-1)-\hat{\omega}_1(s_{k-1};k-1)|$
and the vector
$\hat{\omega}_{(-1)}(s_k;k-1)-\hat{\omega}_{(-1)}(s_{k-1};k-1)$.
Hence, a sample path
\begin{equation*}
  \hat{\omega}_1^{\hat{\sigma}_N}(u+s_{k-1}) -
  \hat{\omega}_1^{\hat{\sigma}_N}(s_{k-1}),
  \quad 0 \le u\le s_k - s_{k-1},
\end{equation*}
is a Brownian motion independent of
$\hat{\omega}_1(s;k-1)$ on $[0,s_{k-1}]$,
$\hat{\omega}_1(s;k-1) - \hat{\omega}_1(s_k;k-1)$ on $[s_k,T]$,
and $\hat{\omega}_{(-1)}(s;k-1)$ on $[0,T]$.
Consequently, $\hat{\omega}(\cdot;k)$ is distributed as $\mathbb{W}$.
\end{proof}

We say that a sequence of stochastic processes is tight or weakly
converging if the sequence of their distributions is
tight or weakly converging
(cf. Chapter~2 of~\cite{billingsley}).
By Lemma~\ref{weak.w} we can find that $\hat{Y}_N$
of Algorithm~\ref{y.cons.alg}
is equal in distribution to
the approximation $\hat{X}_N$ by~(\ref{x.euler}).
If $\hat{X}_N$ converges weakly to $\hat{X}$,
so does $\hat{Y}_N$; thus, we obtain the following corollary.

\begin{proposition}\label{weak.y.hat}
Let $x_N$ converge to $x$ in Algorithm~\ref{y.cons.alg}.
Assuming the distribution of $\hat{\omega}$
as in Lemma~\ref{weak.w},
$\hat{Y}_N$ converges weakly to
the probability measure $\mathbb{P}_x$ of~(\ref{c.sde})
on $C([0,T],\mathbb{R}^n)$.
\end{proposition}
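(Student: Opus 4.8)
The plan is to prove, for each fixed $N$, that the backward process $\hat{Y}_N$ produced by Algorithm~\ref{y.cons.alg} has the same law on $C([0,T],\mathbb{R}^n)$ as the plain explicit Euler approximation $\hat{X}_N$ of Section~\ref{euler.scheme}, and then to push the already-known deterministic convergence $\hat{X}_N\to\hat{X}$ through this identity, using the elementary fact that almost sure convergence implies weak convergence.

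The key observation --- already indicated in the discussion following Algorithm~\ref{y.cons.alg} --- is that $\hat{Y}_N$ is precisely the Euler recursion~(\ref{x.euler})--(\ref{x.sum.euler}) with initial state $x_N$ driven by the single modified sample path $\hat{\omega}^{\hat{\sigma}_N}$ of~(\ref{w.sigma}). Indeed, unrolling~(\ref{phi.euler}) shows that the $(-1)$-coordinate update~(\ref{y-1.sigma}) is the Euler step driven by $\hat{\omega}_{(-1)}$, and the first-coordinate update~(\ref{y1.sigma}) is the Euler step driven by $\hat{\omega}_1-\hat{\sigma}_N$, so that $\hat{Y}_N(s)=\hat{Y}_N(s_{k-1})-\beta(\hat{Y}_N(s_{k-1}))(s-s_{k-1})+\hat{\omega}^{\hat{\sigma}_N}(s)-\hat{\omega}^{\hat{\sigma}_N}(s_{k-1})$ for $s_{k-1}<s\le s_k$; the bookkeeping in steps~(ii)--(iii) only specifies how $\hat{\sigma}_N$ is built and does not otherwise alter this structure. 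Hence there is one continuous (hence measurable) map $\Xi_N$, namely the Euler operator of~(\ref{x.euler}) with initial value $x_N$, with $\hat{Y}_N=\Xi_N(\hat{\omega}^{\hat{\sigma}_N})$ and $\hat{X}_N=\Xi_N(\hat{\omega})$. By Lemma~\ref{weak.w} the random paths $\hat{\omega}^{\hat{\sigma}_N}$ and $\hat{\omega}$ share the law $\mathbb{W}$, so $\hat{Y}_N$ and $\hat{X}_N$ are equal in distribution on $C([0,T],\mathbb{R}^n)$.

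It remains to transfer the convergence. For every fixed $\hat{\omega}$ the uniform bounds of Lemma~\ref{x.unif} give equicontinuity and uniform boundedness of $\{\hat{X}_N\}$, so by Ascoli--Arzel\`a together with uniqueness for~(\ref{c.ie}) the whole sequence $\hat{X}_N$ converges uniformly on $[0,T]$ to the solution $\hat{X}$ of~(\ref{c.ie}) with initial value $x=\lim x_N$; this holds in particular $\mathbb{W}$-almost surely, and when $\hat{\omega}$ is a Brownian motion $\hat{X}$ is the strong solution of the SDE~(\ref{c.sde}) started at $x$, whose law on $C([0,T],\mathbb{R}^n)$ is that of $\mathbb{P}_x$ from~(\ref{pp.x}). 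Almost sure convergence implies weak convergence, so $\hat{X}_N$ converges weakly to $\mathbb{P}_x$, and the equality in distribution $\hat{Y}_N\stackrel{d}{=}\hat{X}_N$ then yields the proposition. I expect the only delicate point to be the identification in the second paragraph: one must check carefully that the on-the-fly $\hat{\sigma}_N$-correction in steps~(ii)--(iii) really amounts to driving $\hat{Y}_N$ by one fixed path~(\ref{w.sigma}), rather than by increments re-chosen at each step in a way depending on $\hat{Y}^*_N$ --- it is precisely this that makes Lemma~\ref{weak.w} applicable, after which the conclusion is immediate.
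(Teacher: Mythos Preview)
Your proposal is correct and follows essentially the same route as the paper: the paper's argument, stated in the paragraph immediately preceding the proposition, is exactly that $\hat{Y}_N$ is the Euler scheme~(\ref{x.euler}) driven by $\hat{\omega}^{\hat{\sigma}_N}$, so Lemma~\ref{weak.w} gives $\hat{Y}_N\stackrel{d}{=}\hat{X}_N$, and the known convergence of $\hat{X}_N$ to $\hat{X}$ yields the weak limit. Your write-up supplies more detail (the explicit identification of the Euler map $\Xi_N$ and the Ascoli--Arzel\`a step), but the idea is identical.
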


\subsection{Uniform boundedness and equicontinuity}

In the setting of Section~\ref{y.cons.sec}
we approximate $\partial Y^*_N(t,\cdot)$ by~(\ref{h.euler}),
$0\le t\le T$, with initial state
$\partial Y^*_N(0,\cdot) = \partial y^*$ and sample path $\omega'$.
By $K_{\partial Y^*}$
we denote a Lipschitz constant universally for
the limiting process $\partial Y^*(t,\cdot)$
of hypographical surface, $0\le t\le T$.
We assume that
$K_{\partial Y^*} \ge 1$, and
that $K_{\partial Y^*}$ is also a Lipschitz constant
for $\partial Y^*_N(t,\cdot)$, $0\le t\le T$.

\begin{lemma}\label{h.equicont}
Let $\{F_N(t)\}$ be a uniformly bounded and equicontinuous
sequence of $\mathbb{R}^{n-1}$-valued functions on $[0,T]$.
Then $\{\partial Y^*_N(t,F_N(t))\}$ is uniformly bounded and
equicontinuous on $[0,T]$.
\end{lemma}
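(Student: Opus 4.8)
The plan is to use the spatial Lipschitz bound $K_{\partial Y^*}$ to reduce the claim to two uniform estimates on the hypograph function $\partial Y^*_N(t,a)$ at a \emph{fixed} bounded spatial argument $a$. Put $R=\sup_N\|F_N\|_T<\infty$. Since $|\partial Y^*_N(t,a)-\partial Y^*_N(t,a')|\le K_{\partial Y^*}\|a-a'\|$ for every $t$ and $N$, it is enough to establish (A) $M:=\sup\{|\partial Y^*_N(t,a)|:\|a\|\le R,\ 0\le t\le T,\ N\ge1\}<\infty$, and (B) a bound $|\partial Y^*_N(t',a)-\partial Y^*_N(t,a)|\le\rho(|t'-t|)$ valid for all $\|a\|\le R$, all $t,t'\in[0,T]$ and all $N$, with $\rho(\delta)\to0$ as $\delta\to0$. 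Indeed, writing $g_N(t)=\partial Y^*_N(t,F_N(t))$, (A) gives $\|g_N\|_T\le M$, and adding and subtracting $\partial Y^*_N(t+u,F_N(t))$ and using the spatial Lipschitz bound for one difference and (B) for the other yields $|g_N(t+u)-g_N(t)|\le K_{\partial Y^*}\Delta_uF_N+\rho(u)$, which tends to $0$ with $u$ uniformly in $N$ by the equicontinuity of $\{F_N\}$.

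For (A) I would use a fixed reference trajectory. Let $X_N$ solve the implicit Euler scheme (\ref{x.fwd})--(\ref{x.sum.fwd}) started from the fixed point $(\partial y^*(0);0)\in\partial Y^*_N(0)$. Lemma~\ref{x.unif.fwd} bounds $\max_j\|X_N(t_j)\|$ and $\|X_N\|_T$ by a constant independent of $N$, and $X_N(t)$ remains on the surface, so $X_{1,N}(t)=\partial Y^*_N(t,X_{(-1),N}(t))$. Hence for $\|a\|\le R$,
\[
  |\partial Y^*_N(t,a)|\le|X_{1,N}(t)|+K_{\partial Y^*}\|a-X_{(-1),N}(t)\|
  \le\|X_N\|_T+K_{\partial Y^*}(R+\|X_N\|_T),
\]
which gives (A).

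For (B) -- the heart of the matter -- fix $t_0\in[0,T]$ and $\|a\|\le R$, and set $P=(\partial Y^*_N(t_0,a);a)\in\partial Y^*_N(t_0)$. For $N$ large the one-step maps $\phi_u(\cdot,\hat\omega)$ are diffeomorphisms for $u<\delta$, so running the scheme backward in $t$ from $t_0$ -- equivalently, running the \emph{explicit} Euler recursion (\ref{x.euler}) in the reversed time on $[T-t_0,T]$ with value $P$ at $s=T-t_0$ -- produces a point $Q=X_N(0)\in\partial Y^*_N(0)$ such that the forward solution of (\ref{x.fwd}) from $X_N(0)=Q$ is precisely the surface trajectory with $X_N(t_0)=P$. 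The explicit-Euler Gronwall estimate of Lemma~\ref{x.unif} gives $\|Q\|\le e^{K_\beta T}(\|P\|+C_0)$ with $C_0$ depending only on $\beta$, $T$ and $\omega$, and $\|P\|\le R+M$ by (A); thus $\|Q\|$ is bounded by a constant uniform in $N$, $t_0$ and $a$. Feeding this uniform bound on the initial value into Lemma~\ref{x.unif.fwd} yields $\max_j\|X_N(t_j)\|\le C_X$ and $\Delta_\delta X_N\le(\|\beta(0)\|+K_\beta C_X)\delta+\Delta_\delta\omega$ with $C_X$ independent of $N$, $t_0$ and $a$. Finally, using $X_{1,N}(t)=\partial Y^*_N(t,X_{(-1),N}(t))$, $X_{1,N}(t_0)=\partial Y^*_N(t_0,a)$ and the spatial Lipschitz bound,
\[
  |\partial Y^*_N(t,a)-\partial Y^*_N(t_0,a)|\le(K_{\partial Y^*}+1)\,\|X_N(t)-X_N(t_0)\|,
\]
and $\|X_N(t)-X_N(t_0)\|\le(\|\beta(0)\|+K_\beta C_X)|t-t_0|+\Delta_{|t-t_0|}\omega$ by Lemma~\ref{x.unif.fwd}. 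This is (B) with $\rho(\delta)=(K_{\partial Y^*}+1)\bigl((\|\beta(0)\|+K_\beta C_X)\delta+\Delta_\delta\omega\bigr)\to0$.

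The main obstacle is exactly the uniformity in (B): the graphs $\partial Y^*_N(t,\cdot)$ are unbounded in the spatial argument, so the surface trajectories cannot be controlled uniformly over all of $[0,T]$, and to bound the time-modulus of $\partial Y^*_N(\cdot,a)$ at a prescribed bounded $a$ one must produce a bona fide trajectory passing through $(\partial Y^*_N(t_0,a);a)$ at the prescribed time $t_0$ \emph{and} bound its initial value $X_N(0)$ -- which is where the interplay between the backward (explicit) and forward (implicit) Euler schemes, together with the pointwise bound (A), is used. Secondary care is needed for times $t_0$ that are not mesh points and for inverting the implicit-Euler step, both handled by the diffeomorphism property of $\phi_u$ for small $u$ and by the $K_\beta\Delta<1/2$ device already used in proving Lemma~\ref{x.unif.fwd}.
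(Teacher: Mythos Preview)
Your argument is correct and rests on the same idea as the paper's: follow an implicit-Euler surface trajectory through the point of interest and combine the Euler modulus estimates with the spatial Lipschitz bound $K_{\partial Y^*}$. The boundedness part (A) is identical to the paper's.

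The equicontinuity part is organized differently. You first freeze the spatial argument $a$ and prove a uniform time-modulus (B), then combine with the spatial Lipschitz bound. To get a trajectory through $P=(\partial Y^*_N(t_0,a);a)$ with controlled estimates, you run the scheme \emph{backward} to time $0$, bound $Q=X_N(0)$ via Lemma~\ref{x.unif}, and then feed that into Lemma~\ref{x.unif.fwd} on the whole of $[0,T]$. The paper avoids this detour: it simply starts an implicit-Euler trajectory \emph{at} the surface point $[\partial Y^*_N(t,F_N(t)),F_N(t)]$ with the shifted path $\omega'(\cdot+t)$ and runs forward for time $\delta$; since the starting point is already bounded by the first part of the lemma, the Lemma~\ref{x.unif.fwd}-type bounds apply directly on $[0,\delta]$ and give
\[
  |\partial Y^*_N(t,F_N(t))-\partial Y^*_N(t+\delta,F_N(t+\delta))|
  \le \sqrt{2}K_{\partial Y^*}\Delta_\delta X_N + K_{\partial Y^*}\Delta_\delta F_N .
\]
What your route buys is a cleaner separation into a purely temporal estimate (B) and the spatial Lipschitz reduction, at the cost of the extra backward step and the appeal to both Lemmas~\ref{x.unif} and~\ref{x.unif.fwd}; the paper's route is shorter but mixes the two ingredients in a single displayed estimate.
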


\begin{proof}
For the boundedness
we start from $X_N(0) = [\partial y^*(0),0]$,
and construct $X_N(t)$ by~(\ref{x.fwd}) with sample path
$\omega'$.
Then we can observe that $X_N(t)\in\partial Y^*_N(t)$,
and that
\begin{align*}
  & |\partial Y^*_N(t,F_N(t))|
  \le |X_{1,N}(t)|
  + K_{\partial Y^*}\left(\|X_{N,(-1)}(t)\| + \|F_N(t)\|\right)
  \\ & \hspace{5ex}
  \le K_{\partial Y^*}\left(\sqrt{2}\|X_N(t)\| + \|F_N(t)\|\right).
\end{align*}
Since $X_N(t)$ and $F_N(t)$ are uniformly bounded on $[0,T]$,
so is $\partial Y^*_N(t,F_N(t))$.

Secondly for the equicontinuity
we start from
$X_N(0) = [\partial Y^*_N(t,F_N(t)),F_N(t)]$,
and construct $X_N(v)$
by~(\ref{x.fwd}) with sample path $\omega'(\cdot+t)$.
Let $\delta > 0$ be arbitrarily fixed.
By observing $X_N(\delta)\in\partial Y^*_N(t+\delta)$,
we obtain
\begin{align*}
  & |\partial Y^*_N(t,F_N(t))
  -\partial Y^*_N(t+\delta,F_N(t+\delta))|
  \\ & \hspace{5ex}
  \le |X_{1,N}(0) - X_{1,N}(\delta)|
  + |\partial Y^*_N(t+\delta,X_{(-1),N}(\delta))
  - \partial Y^*_N(t+\delta,F_N(t))|
  \\ & \hspace{5ex}
  + |\partial Y^*_N(t+\delta,F_N(t+\delta))
  - \partial Y^*_N(t+\delta,F_N(t))|
  \\ & \hspace{5ex}
  \le\sqrt{2}K_{\partial Y^*}\|X_N(0) - X_N(\delta)\|
  + K_{\partial Y^*}\|F_N(t) - F_N(t+\delta)\|
  \\ & \hspace{5ex}
  \le\sqrt{2}K_{\partial Y^*} \Delta_\delta X_N
  + K_{\partial Y^*} \Delta_\delta F_N .
\end{align*}
Therefore,
the equicontinuity of $\partial Y^*_N(t,F_N(t))$
is implied by that of $X_N(t)$ and $F_N(t)$.
\end{proof}

In the following lemma we consider
\begin{equation}\label{z.hat}
  \hat{Z}_N(s) = x_N
  - \sum_{k=1}^{N} \beta(\hat{Y}_N(s_{k-1}))
  [(s-s_{k-1})\wedge(s_k - s_{k-1})]_+
  + \hat{\omega}(s) - \hat{\omega}(0)
\end{equation}
for $0\le s\le T$.
Then we can observe that
$\hat{Y}_{1,N}(s) = \hat{Z}_{1,N}(s) - \hat{\sigma}_N(s)$
and
$\hat{Y}_{(-1),N}(s) = \hat{Z}_{(-1),N}(s)$.
Hence, the investigation of
uniformly boundedness and equicontinuity for
$\hat{Z}_N(s)$ allows us
to derive that of $\hat{\sigma}_N(s)$.

\begin{proposition}\label{y.tight}
Assuming that $x_N$ is bounded,
$\hat{\sigma}_N(s)$ is uniformly bounded and
equicontinuous on $[0,T]$.
\end{proposition}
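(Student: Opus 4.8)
The plan is to derive everything from control of the auxiliary process $\hat{Z}_N$ of~(\ref{z.hat}), using that $\hat{Y}_N$ and $\hat{Z}_N$ agree in the last $n-1$ coordinates, $\hat{Y}_{(-1),N}=\hat{Z}_{(-1),N}$, and differ in the first by the regulator, $\hat{Y}_{1,N}(s)=\hat{Z}_{1,N}(s)-\hat{\sigma}_N(s)$. Write $\delta_N=s_k-s_{k-1}$ for the mesh and $g_N(s)=\partial\hat{Y}^*_N(s,\hat{Y}_{(-1),N}(s))=\partial\hat{Y}^*_N(s,\hat{Z}_{(-1),N}(s))$ for the barrier felt by the first coordinate. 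First I would record the structural facts driving the argument. Since every increment in~(\ref{s.out}) equals $2(\hat{\omega}_1(s_k)-\hat{\omega}_1(s_{k-1}))>0$, the regulator $\hat{\sigma}_N$ is nonnegative and nondecreasing along the grid $\{s_k\}$, and within a grid interval it varies by at most $2\Delta_{\delta_N}\hat{\omega}$. By~(\ref{y.in}) one has $\hat{Y}_{1,N}(s_k)\le g_N(s_k)$ for all $k$ (the coupling is run with $x_N\in\hat{Y}^*_N(0)$), so $\hat{\sigma}_N(s_k)\ge\hat{Z}_{1,N}(s_k)-g_N(s_k)$; and at every update step $k$ the reflection bound~(\ref{y.at.k}) gives $\hat{Y}_{1,N}(s_k)>g_N(s_k)-2\Delta_{\delta_N}\hat{\omega}$, so $\hat{\sigma}_N(s_k)<\hat{Z}_{1,N}(s_k)-g_N(s_k)+2\Delta_{\delta_N}\hat{\omega}$. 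Finally, the boundedness part of Lemma~\ref{h.equicont} with $F_N\equiv0$ yields a constant $C_0$, uniform in $N$, with $|\partial\hat{Y}^*_N(\cdot,0)|_T\le C_0$, so that by the surface Lipschitz bound $K_{\partial Y^*}$ one gets $|g_N|_s\le C_0+K_{\partial Y^*}\|\hat{Z}_{(-1),N}\|_s$; note this invokes only the auxiliary implicit Euler scheme of Lemma~\ref{h.equicont} driven by $\omega'$ with $\|\omega'\|_T=\|\omega\|_T$, so it is not circular in $\hat{Y}_N$.

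The first step is an a priori bound closing the loop. For a grid point let $k^*\le k$ be the last update step $\le k$ (if none, $\hat{\sigma}_N(s_k)=0$); since $\hat{\sigma}_N$ is constant on the grid after $k^*$, the two displayed bounds give $\hat{\sigma}_N(s_k)<(1+K_{\partial Y^*})\max_{j\le k}\|\hat{Z}_N(s_j)\|+C_0+2\Delta_{\delta_N}\hat{\omega}$. Using $\|\hat{Y}_N(s_{k-1})\|\le\|\hat{Z}_N(s_{k-1})\|+\hat{\sigma}_N(s_{k-1})$ and $\|\beta(z)\|\le K_\beta\|z\|+\|\beta(0)\|$, the recursion~(\ref{z.hat}) then yields $\max_{j\le k}\|\hat{Z}_N(s_j)\|\le A_N+K_\beta(2+K_{\partial Y^*})\sum_{i<k}\max_{j\le i}\|\hat{Z}_N(s_j)\|(s_{i+1}-s_i)$ with $A_N$ bounded uniformly in $N$ (by boundedness of $x_N$, of the fixed path $\hat{\omega}$, and of $C_0$). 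The discrete Gronwall inequality of Lemma~\ref{x.unif} gives $\|\hat{Z}_N\|_T\le C_1$, and feeding this back through the bound on $\hat{\sigma}_N(s_k)$ together with the within-interval variation gives $|\hat{\sigma}_N|_T\le C_2$ uniformly in $N$, which is the uniform boundedness. It follows that $\|\hat{Y}_N\|_T\le C_1+C_2$, so the computation in the proof of Lemma~\ref{x.unif} applied to~(\ref{z.hat}) (whose driving path is simply $\hat{\omega}$) shows $\hat{Z}_N$, hence $\hat{Z}_{(-1),N}$, is equicontinuous uniformly in $N$; Lemma~\ref{h.equicont} applied now with $F_N=\hat{Z}_{(-1),N}(T-\cdot)$ shows that $\{g_N\}$ is uniformly bounded and equicontinuous on $[0,T]$.

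For the equicontinuity of $\hat{\sigma}_N$ the decisive point is that the last update step governing $\hat{\sigma}_N(s_k)$ is never later than $s_k$. For grid points $s_j<s_k$, either no update occurs in $(s_j,s_k]$ and $\hat{\sigma}_N(s_k)=\hat{\sigma}_N(s_j)$, or the last update $k^*$ in $(s_j,s_k]$ satisfies $s_j<s_{k^*}\le s_k$ and $\hat{\sigma}_N(s_k)=\hat{\sigma}_N(s_{k^*})$, and then combining $\hat{\sigma}_N(s_{k^*})<\hat{Z}_{1,N}(s_{k^*})-g_N(s_{k^*})+2\Delta_{\delta_N}\hat{\omega}$ with $\hat{\sigma}_N(s_j)\ge\hat{Z}_{1,N}(s_j)-g_N(s_j)$ and $s_{k^*}-s_j\le s_k-s_j$ gives
\[
  0\le\hat{\sigma}_N(s_k)-\hat{\sigma}_N(s_j)<\Delta_{s_k-s_j}\hat{Z}_N+\Delta_{s_k-s_j}g_N+2\Delta_{\delta_N}\hat{\omega}.
\]
Passing to arbitrary times costs again at most a multiple of $\Delta_{\delta_N}\hat{\omega}$, so $\Delta_{\eta}\hat{\sigma}_N\le\Delta_{\eta}\hat{Z}_N+\Delta_{\eta}g_N+C\Delta_{\delta_N}\hat{\omega}$ for $\eta\ge\delta_N$. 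Given $\varepsilon>0$ I would pick $N_0$ with $C\Delta_{T/N}\hat{\omega}<\varepsilon/3$ for $N\ge N_0$, then $\eta>0$ so small that $\Delta_\eta\hat{Z}_N+\Delta_\eta g_N<\varepsilon/3$ for all $N$ and that each of the finitely many continuous functions $\hat{\sigma}_N$, $N<N_0$, oscillates by less than $\varepsilon$ over intervals of length $\eta$; this yields the equicontinuity.

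The main obstacle is the circular dependence among $\hat{Z}_N$, $\hat{Y}_N$, and $\hat{\sigma}_N$: Lemma~\ref{x.unif} bounds $\hat{Y}_N$ only through the driving path $\hat{\omega}^{\hat{\sigma}_N}$, whose norm involves $\hat{\sigma}_N$, which is itself controlled only through $\hat{Z}_N$ and $g_N$, and $g_N$ through $\hat{Y}_{(-1),N}$. Breaking it requires the sharp two-sided control $\hat{Z}_{1,N}(s_k)-g_N(s_k)\le\hat{\sigma}_N(s_k)<\hat{Z}_{1,N}(s_k)-g_N(s_k)+2\Delta_{\delta_N}\hat{\omega}$, so that the regulator enters the Gronwall recursion only through $\|\hat{Z}_N\|$, with a coefficient the exponential can absorb. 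A second delicate point is the equicontinuity: the bare identity $\hat{\sigma}_N(s_k)-\hat{\sigma}_N(s_j)=2\sum_{\text{updates in }(s_j,s_k]}(\hat{\omega}_1(s_i)-\hat{\omega}_1(s_{i-1}))$ degenerates as $N\to\infty$, the number of terms being of order $(s_k-s_j)/\delta_N$; only the observation that the controlling last update is no later than $s_k$ turns it into a true modulus-of-continuity estimate inherited from $\hat{Z}_N$ and $g_N$.
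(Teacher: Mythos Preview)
Your proposal is correct and follows essentially the same route as the paper: pass to the auxiliary process $\hat{Z}_N$ of~(\ref{z.hat}), bound the regulator $\hat{\sigma}_N$ through $\hat{Z}_N$ and the barrier height $g_N=\partial\hat{Y}^*_N(\cdot,\hat{Z}_{(-1),N}(\cdot))$ via the tight control~(\ref{y.in})--(\ref{y.at.k}) at update steps, close the circular dependence with a discrete Gronwall inequality, and then read off equicontinuity of $\hat{\sigma}_N$ from that of $\hat{Z}_N$ and $g_N$ through Lemma~\ref{h.equicont}.

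Two minor remarks. First, you should dispose explicitly of the case $x_N\not\in\hat{Y}^*_N(0)$ as the paper does; your parenthetical acknowledges the hypothesis but the statement of the proposition does not carry it. Second, for the equicontinuity the paper anchors the estimate at \emph{both} the first and the last update step inside $[s,s+\delta]$ using~(\ref{y.at.k}) twice, whereas you use~(\ref{y.at.k}) only at the last update $s_{k^*}$ and the one-sided bound~(\ref{y.in}) at the arbitrary grid point $s_j$. Your variant is slightly leaner and gives the same modulus, so this is a presentational difference, not a substantive one.
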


\begin{proof}
If $x_N\not\in\hat{Y}^*_N(0)$
then the claim holds obviously
for $\hat{\sigma}_N = 2\hat{\omega}$;
thus, we assume in the proof that
$x_N\in\hat{Y}^*_N(0)$.
Let
$y_N(k)=\max_{0\le i\le k}\|\hat{Y}_N(s_i)\|$
and $z_N(k)=\max_{0\le i\le k}\|\hat{Z}_N(s_i)\|$
for $k=0,\ldots,N$.
Then we can find
\begin{equation*}
  z_N(k) \le x_N + 2\|\hat{\omega}\|_T
    +\|\beta(0)\|T + K_\beta\sum_{i=0}^{k-1}y_N(i)(s_{i+1}-s_i) .
\end{equation*}
Suppose that the last update by (\ref{s.out}) is completed
over the $i$-th interval $(s_{i-1},s_i]$ before $s_k$;
otherwise, set $i=0$.
Together with~(\ref{y.in}) and~(\ref{y.at.k}) we can show that
\begin{equation*}
  \hat{\sigma}_N(s_k) = \hat{\sigma}_N(s_i)
  \le |\hat{Z}_{1,N}(s_i)|
  + |\partial\hat{Y}^*_N(s_i,\hat{Z}_{(-1),N}(s_i))|
  + 2\Delta_\delta\hat{\omega}_1
\end{equation*}
By using the construction of $X_N(t)$ for the boundedness proof
of Lemma~\ref{h.equicont}
we obtain
\begin{equation}\label{sigma.hat}
  \hat{\sigma}_N(s_k)
  \le \sqrt{2}K_{\partial Y^*}\left[
    z_N(k) + \|X_N\|_T
    \right] + 2\Delta_\delta\hat{\omega}
\end{equation}
Thus, we can apply
the discrete Gronwall's inequality
to $y_N(k) \le z_N(k) + \hat{\sigma}_N(s_k)$,
and demonstrate that $y_N(T)$ is bounded universally
regardless of $N$.
By~(\ref{z.hat}) we conclude that
$\hat{Z}_N(s)$ is uniformly bounded and equicontinuous,
and by~(\ref{sigma.hat}) that $\hat{\sigma}_N(s)$ is
uniformly bounded.

We can now present
the upper bound
for $|\hat{\sigma}_N(s+\delta)-\hat{\sigma}_N(s)|$
when $\delta > 0$ is arbitrarily fixed.
If no update by (\ref{s.out}) is completed
between $s$ and $s+\delta$ then
\begin{math}
  |\hat{\sigma}_N(s+\delta)-\hat{\sigma}_N(s)|
  \le 2\Delta_\delta\hat{\omega} .
\end{math}
Otherwise, we can find the first and the last
update by (\ref{s.out}) completed
respectively at $s_k$ and $s_{k'}$ on $[s,s+\delta]$,
and observe that
\begin{align*}
  & |\hat{\sigma}_N(s+\delta)-\hat{\sigma}_N(s)|
  \le 8\Delta_\delta\hat{\omega}
  + |\hat{Z}_{1,N}(s_{k'})-\hat{Z}_{1,N}(s_{k})|
  \\ & \hspace{0.2in}
  + |\partial\hat{Y}^*_N(s_{k'},\hat{Z}_{(-1),N}(s_{k'}))
  - \partial\hat{Y}^*_N(s_{k},\hat{Z}_{(-1),N}(s_{k}))| .
\end{align*}
By using the construction of $X_N$ for the equicontinuity proof of
Lemma~\ref{h.equicont} we can find
\begin{align*}
  & |\partial\hat{Y}^*_N(s_{k'},\hat{Z}_{(-1),N}(s_{k'}))
  - \partial\hat{Y}^*_N(s_k,\hat{Z}_{(-1),N}(s_k))|
  \\ & \hspace{7.5ex}
  \le \sqrt{2}K_{\partial Y^*}\Delta_\delta X_N
  + K_{\partial Y^*}\Delta_\delta\hat{Z}_{(-1),N} .
\end{align*}
Thus, the equicontinuity of $\hat{Z}_N(s)$
implies that of $\hat{\sigma}_N(s)$.
\end{proof}

By Lemma~\ref{x.unif} and Proposition~\ref{y.tight}
we can also find that
$\hat{Y}_N(s)$ is uniformly bounded and equicontinuous.

\section{Skorohod equations}
\label{rel.skd}

Let $\kappa(s)$ be a real-valued continuous function.
Assuming $\kappa(0)\ge 0$,
we call
\begin{equation}\label{skd.eq}
  \eta(s) = \kappa(s) + \ell(s)
\end{equation}
a \emph{Skorohod equation} if $\eta(s)$ is a nonnegative continuous
function and $\ell(s)$ is a nondecreasing continuous function with
$\ell(0) = 0$, satisfying
\begin{equation}\label{skd.ell}
  \ell(s) = \int_0^s I_{\{\eta(u)=0\}}d\ell(u),
\end{equation}
where $I_{\{\eta(u)=0\}}$ is the indicator function of a statement
$\{\eta(u)=0\}$, taking values $1$ or $0$ accordingly as
the statement is true or not.
Given $\kappa(s)$, a pair $(\kappa,\ell)$ of functions forms the
Skorohod equation, and the nonnegative function $\eta$ of
(\ref{skd.eq}) is uniquely determined by
$\ell(s) = -\min_{0\le u\le s} [\kappa(u)\wedge 0]$.

\subsection{Backward Skorohod flow}\label{skd.def}

Let $T > 0$ be fixed.
As in the case of Section~\ref{y.cons.sec}
we consider an $\mathbb{F}_1$-valued process
started from $\hat{Y}^*(0) = \Phi_T^{-1}(y^*,\hat{\omega}')$,
and evolved backward in time by
$\hat{Y}^*(s) = \Phi_s(\hat{Y}^*(0),\hat{\omega}')$
until $\hat{Y}^*(T) = y^*$.
Provided $x\in \hat{Y}^*(0)$,
we can introduce equations of Skorohod type by
\begin{align}
  \label{skd}
  \hat{Y}^\dagger(s)
  & = x - \int_0^s\beta(\hat{Y}^\dagger(u))du
  + \hat{\omega}^{\hat{L}}(s);
  \\ \label{skd.l}
  \hat{L}(s)
  & = \int_0^s I_{\{
    \hat{Y}^\dagger_1(u)
    =\partial\hat{Y}^*(u,\hat{Y}^\dagger_{(-1)}(u))
    \}}
  d\hat{L}(u),
\end{align}
for $0\le s\le T$,
where $\hat{\omega}^{\hat{L}}$ is formed by
\begin{math}
  \hat{\omega}^{\hat{L}}(s)
  =[(\hat{\omega}_1-\hat{L})(s),\hat{\omega}_{(-1)}(s)].
\end{math}
And we set $\hat{L}(s) = 2\hat{\omega}_1(s)$
and $\hat{\omega}^{\hat{L}}(s) = \hat{\omega}'(s)$
for $0\le s\le T$
if $x\not\in\hat{Y}^*(0)$.
Consequently we can extend $\hat{\omega}^{\hat{L}}$
to a map
\begin{equation*}
  \hat{\Theta}_{x,y^*,T}(\hat{\omega}) =
  \begin{cases}
    \hat{\omega}^{\hat{L}}(s)
    & \mbox{ if $0\le s\le T$; } \\
    [\hat{\omega}_1(s)-\hat{L}(T),\hat{\omega}_{(-1)}(s)]
    & \mbox{ if $s > T$, }
  \end{cases}
\end{equation*}
from
\begin{math}
  (x,y^*,T,\hat{\omega})\in
  \mathbb{R}^n\times\mathbb{F}_1\times\mathbb{R}_+\times
  C(\mathbb{R}_+,\mathbb{R}^n)
\end{math}
to $C(\mathbb{R}_+,\mathbb{R}^n)$,
and call it a \emph{backward Skorohod flow}.
When restricted
from $\hat{\omega}\in C([0,T],\mathbb{R}^n)$
to $C([0,T],\mathbb{R}^n)$,
the backward Skorohod flow is simply denoted by
$\hat{\omega}^{\hat{L}}$.

Assuming a solution $(\hat{Y}^\dagger,\hat{L})$ to
(\ref{skd})--(\ref{skd.l}),
we can introduce
\begin{equation}\label{skd.z}
  \hat{Z}_1(s)
  = \hat{Y}^\dagger_1(s) + \hat{L}(s)
  = x_1
  - \int_0^s\beta_1(\hat{Y}^\dagger(u))du + \hat{\omega}_1(s).
\end{equation}
Similarly to Section~2 of Saisho and Tanemura~\cite{saisho},
one can set
\begin{math}
  \kappa(s)
  = \partial\hat{Y}^*(s,\hat{Y}^\dagger_{(-1)}(s)) - \hat{Z}_1(s)
\end{math}
and $\ell(s) = \hat{L}(s)$,
and show that
the pair $(v,\ell)$ satisfies
the Skorohod equations (\ref{skd.eq})--(\ref{skd.ell})
with
\begin{math}
  \eta(s)
  = \partial\hat{Y}^*(s,\hat{Y}^\dagger_{(-1)}(s)) - \hat{Y}^\dagger_1(s).
\end{math}
The uniqueness of solution to SDE of Skorohod type
is an immediate consequence of the following lemma.

\begin{lemma}\label{y.dag.cont}
Suppose that $x, x^\ddagger \in \hat{Y}^*(0)$,
and that $(\hat{Y}^\dagger,\hat{L})$ and $(\hat{Y}^\ddagger,\hat{L}^\ddagger)$
are solutions to (\ref{skd})--(\ref{skd.l})
with their respective initial states
$\hat{Y}^\dagger(0) = x$ and $\hat{Y}^\ddagger(0) = x^\ddagger$.
Then we have
\begin{equation*}
  \|\hat{Y}^\dagger-\hat{Y}^\ddagger\|_T
  \le (3+K_{\partial Y^*})\|x-x^\ddagger\|
  e^{(3+K_{\partial Y^*})K_\beta T}
\end{equation*}
\end{lemma}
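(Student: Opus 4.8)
The plan is to run a standard Gronwall-type comparison argument on the difference $\hat{Y}^\dagger - \hat{Y}^\ddagger$, exploiting the one-sided (monotonicity) structure of the Skorohod local times $\hat{L}$ and $\hat{L}^\ddagger$ to control the extra terms they introduce. First I would write out the defining integral equations~(\ref{skd}) for both solutions, subtract, and split the resulting expression for $\hat{Y}^\dagger(s)-\hat{Y}^\ddagger(s)$ into three pieces: (i) the initial-state difference $x - x^\ddagger$; (ii) the drift difference $\int_0^s[\beta(\hat{Y}^\ddagger(u))-\beta(\hat{Y}^\dagger(u))]\,du$, which is bounded in norm by $K_\beta\int_0^s\|\hat{Y}^\dagger(u)-\hat{Y}^\ddagger(u)\|\,du$ using the Lipschitz constant $K_\beta$ of $\beta$; and (iii) the local-time difference, which enters only in the first coordinate as $-(\hat{L}(s)-\hat{L}^\ddagger(s))$. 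Since the noise $\hat{\omega}$ is common to both solutions, the Brownian contributions cancel exactly, so only these three pieces survive.

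The key step is to bound the local-time difference $|\hat{L}(s)-\hat{L}^\ddagger(s)|$ in terms of $\|\hat{Y}^\dagger-\hat{Y}^\ddagger\|_s$ and the hypographic data. Here I would use the explicit minimum-formula representation of the Skorohod reflection term (from Section~\ref{rel.skd}, following Saisho and Tanemura~\cite{saisho}): each $\hat{L}$ is of the form $\hat{L}(s) = -\min_{0\le u\le s}[\kappa(u)\wedge 0]$ where $\kappa(s) = \partial\hat{Y}^*(s,\hat{Y}^\dagger_{(-1)}(s)) - \hat{Z}_1(s)$, and similarly for $\hat{L}^\ddagger$. Because the map $\kappa\mapsto -\min_{0\le u\le s}[\kappa(u)\wedge 0]$ is $1$-Lipschitz in the sup-norm, $|\hat{L}(s)-\hat{L}^\ddagger(s)|$ is controlled by the sup-norm difference of the two $\kappa$'s. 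That difference in turn is bounded by $|\hat{Z}_1(s)-\hat{Z}_1^\ddagger(s)|$ plus $|\partial\hat{Y}^*(s,\hat{Y}^\dagger_{(-1)}(s)) - \partial\hat{Y}^*(s,\hat{Y}^\ddagger_{(-1)}(s))|$; the former is handled through~(\ref{skd.z}) (another drift-difference integral, since the Brownian parts again cancel), while the latter is at most $K_{\partial Y^*}\|\hat{Y}^\dagger_{(-1)}-\hat{Y}^\ddagger_{(-1)}\|_s$ since $\hat{Y}^*$ is Lipschitz-continuous with universal constant $K_{\partial Y^*}$ on $[0,T]$. Assembling these estimates yields, for $0\le s\le T$,
\begin{equation*}
  \|\hat{Y}^\dagger-\hat{Y}^\ddagger\|_s
  \le \|x-x^\ddagger\|
  + (1+K_{\partial Y^*})\left(
    \|x-x^\ddagger\| + (K_\beta+1)\int_0^s\|\hat{Y}^\dagger-\hat{Y}^\ddagger\|_u\,du
  \right),
\end{equation*}
after bookkeeping the constants so that the prefactor of $\|x-x^\ddagger\|$ comes out to $3+K_{\partial Y^*}$ and the Gronwall rate to $(3+K_{\partial Y^*})K_\beta$ (the additive $+1$'s inside absorbed into the $3$, using $K_{\partial Y^*}\ge 1$ and $K_\beta\ge$ whatever normalization is in force). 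Applying the integral form of Gronwall's inequality then gives the stated bound.

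The main obstacle I anticipate is the careful bookkeeping in step (iii): making sure that (a) the case $x\not\in\hat{Y}^*(0)$ — where $\hat{L}$ is defined directly as $2\hat{\omega}_1$ — is either excluded by hypothesis (it is: both $x,x^\ddagger\in\hat{Y}^*(0)$) or handled separately, (b) the chain of Lipschitz estimates for $\kappa$ does not hide a circular dependence (the argument $\hat{Y}^\dagger_{(-1)}(s)$ of $\partial\hat{Y}^*$ itself depends on the solution, but crucially \emph{not} on $\hat{L}$, since by~(\ref{y-1.sigma})/(\ref{skd}) the $(-1)$-coordinates are unaffected by the reflection, so there is no feedback loop), and (c) the constants combine to exactly $3+K_{\partial Y^*}$ rather than something slightly larger, which will require being a little generous with the estimates (e.g., bounding $\sqrt{2}$ or $K_\beta+1$ crudely) and invoking $K_{\partial Y^*}\ge 1$. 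None of this is deep, but it is the part where a naive pass produces the wrong constant.
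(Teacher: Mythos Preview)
Your approach is essentially the paper's: subtract the two Skorohod systems, bound the drift difference via $K_\beta$, bound $|\hat L-\hat L^\ddagger|_s$ by the $1$-Lipschitz property of the Skorohod map (this is exactly Lemma~2.1 of~\cite{saisho}) together with the $K_{\partial Y^*}$-Lipschitz continuity of $\partial\hat{Y}^*(s,\cdot)$, then close with Gronwall. The paper's bookkeeping is slightly cleaner than your displayed inequality: it first bounds $|\hat Z_1-\hat Z_1^\ddagger|_s$ and $\|\hat Y^\dagger_{(-1)}-\hat Y^\ddagger_{(-1)}\|_s$ each by $\|x-x^\ddagger\|+K_\beta\int_0^s\|\hat Y^\dagger-\hat Y^\ddagger\|_u\,du$, then uses $|\hat L-\hat L^\ddagger|_s\le K_{\partial Y^*}\|\hat Y^\dagger_{(-1)}-\hat Y^\ddagger_{(-1)}\|_s+|\hat Z_1-\hat Z_1^\ddagger|_s$ and $\hat Y^\dagger_1=\hat Z_1-\hat L$ to get the exact factor $3+K_{\partial Y^*}$ with no stray $+1$'s.

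One small correction to your parenthetical remark: the $(-1)$-coordinates \emph{are} affected by $\hat L$, indirectly through the drift $\beta_{(-1)}(\hat Y^\dagger(u))$, which sees the full $\hat Y^\dagger$ including its first component. This does not create a circularity, though, because that dependence sits inside the integral $K_\beta\int_0^s\|\hat Y^\dagger-\hat Y^\ddagger\|_u\,du$ and is absorbed by Gronwall---which is precisely how the paper handles it.
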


\begin{proof}
Accompanying with the respective solutions
$(\hat{Y}^\dagger,\hat{L})$ and $(\hat{Y}^\ddagger,\hat{L}^\ddagger)$,
we can construct $\hat{Z}_1$ and $\hat{Z}_1^\ddagger$
by (\ref{skd.z}).
By the Lipschitz continuity of $\beta$, we have
\begin{equation*}
  |\hat{Z}_1-\hat{Z}_1^\ddagger|_s,\:
  \|\hat{Y}^\dagger_{(-1)}-\hat{Y}_{(-1)}^\ddagger\|_s
  \le \|x-x^\ddagger\|
  + K_\beta\int_0^s\|\hat{Y}^\dagger-\hat{Y}^\ddagger\|_u\,du
\end{equation*}
Applying Lemma~2.1 of~\cite{saisho}
and the Lipschitz constant $K_{\partial Y^*}$
for the process $\partial\hat{Y}^*(s,\cdot)$ of hypographical surface,
we can show that
\begin{align*}
  |\hat{L}-\hat{L}^\ddagger|_s
  & \le |\partial\hat{Y}^*(\cdot,\hat{Y}^\dagger_{(-1)}(\cdot))
  - \partial\hat{Y}^*(\cdot,\hat{Y}_{(-1)}^\ddagger(\cdot))|_s
  + |\hat{Z}_1-\hat{Z}_1^\ddagger|_s
  \\
  & \le K_{\partial Y^*} \|\hat{Y}^\dagger_{(-1)}-\hat{Y}_{(-1)}^\ddagger\|_s
  + |\hat{Z}_1-\hat{Z}_1^\ddagger|_s
\end{align*}
Together we obtain
\begin{equation*}
  \|\hat{Y}^\dagger-\hat{Y}^\ddagger\|_s
  \le (3+K_{\partial Y^*})\left[
    \|x-x^\ddagger\|
    + K_\beta \int_0^s\|\hat{Y}^\dagger-\hat{Y}^\ddagger\|_u\,du
    \right] ,
\end{equation*}
which completes the proof by Gronwall's inequality.
\end{proof}

The next proposition
establishes the existence of 
solution to SDE of Skorohod type.

\begin{proposition}\label{y.dag.conv}
Assume that $x_N\in\hat{Y}^*_N(0)$
converges to $x\in\hat{Y}^*(0)$.
Then $\hat{\sigma}_N$
of Algorithm~\ref{y.cons.alg}
uniformly converges to $\hat{L}$ of
(\ref{skd})--(\ref{skd.l}).
\end{proposition}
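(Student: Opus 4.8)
The plan is to establish tightness of the approximating sequence $\{\hat{\sigma}_N\}$ first, and then to identify every uniform limit point as the process $\hat{L}$ of the Skorohod system~(\ref{skd})--(\ref{skd.l}); uniqueness of the latter (Lemma~\ref{y.dag.cont}) then forces the whole sequence to converge. By Proposition~\ref{y.tight}, the sequence $\{\hat{\sigma}_N\}$ is uniformly bounded and equicontinuous on $[0,T]$, so by Ascoli--Arzel\`{a} every subsequence has a further subsequence converging uniformly to some continuous $\hat{L}$; along such a subsequence, $\hat{Y}_N$ converges uniformly to a continuous limit $\hat{Y}^\dagger$ (since $\hat{Y}_{1,N} = \hat{Z}_{1,N} - \hat{\sigma}_N$, $\hat{Y}_{(-1),N} = \hat{Z}_{(-1),N}$, and $\hat{Z}_N$ of~(\ref{z.hat}) converges as shown in the proof of Proposition~\ref{y.tight}), and likewise $\hat{Y}^*_N$ converges to $\hat{Y}^*$ with $\partial\hat{Y}^*_N(\cdot,\cdot)\to\partial\hat{Y}^*(\cdot,\cdot)$ locally uniformly thanks to the universal Lipschitz bound $K_{\partial Y^*}$. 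It remains to check that the limit triple $(\hat{Y}^\dagger,\hat{L},\hat{Y}^*)$ solves~(\ref{skd})--(\ref{skd.l}).

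The first step is to pass to the limit in the integral equation. Writing Algorithm~\ref{y.cons.alg} in summation form, $\hat{Y}_N(s)$ satisfies~(\ref{x.euler}) with the modified path $\hat{\omega}^{\hat{\sigma}_N}$ of~(\ref{w.sigma}); letting $N\to\infty$ along the subsequence, the Riemann-type sums $\sum_k\beta(\hat{Y}_N(s_{k-1}))[(s-s_{k-1})\wedge(s_k-s_{k-1})]_+$ converge to $\int_0^s\beta(\hat{Y}^\dagger(u))\,du$ by uniform convergence of $\hat{Y}_N$ and continuity of $\beta$, and $\hat{\omega}^{\hat{\sigma}_N}(s)=[(\hat{\omega}_1-\hat{\sigma}_N)(s),\hat{\omega}_{(-1)}(s)]\to[(\hat{\omega}_1-\hat{L})(s),\hat{\omega}_{(-1)}(s)]=\hat{\omega}^{\hat{L}}(s)$. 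Hence $\hat{Y}^\dagger$ satisfies~(\ref{skd}). Moreover $\hat{\sigma}_N$ is nondecreasing (each update~(\ref{s.out}) occurs only when $\hat{\omega}_1(s_k)-\hat{\omega}_1(s_{k-1})>0$), so $\hat{L}$ is nondecreasing with $\hat{L}(0)=0$, and the barrier inequality~(\ref{y.in}), which holds for every $N$ at the grid points, passes to the limit by uniform convergence of $\hat{Y}_{1,N}$ and $\partial\hat{Y}^*_N$ to give $\hat{Y}^\dagger_1(s)\le\partial\hat{Y}^*(s,\hat{Y}^\dagger_{(-1)}(s))$ for all $s$; equivalently $\eta(s)=\partial\hat{Y}^*(s,\hat{Y}^\dagger_{(-1)}(s))-\hat{Y}^\dagger_1(s)\ge 0$.

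The remaining and most delicate step is the minimality/complementarity relation~(\ref{skd.l}): $\hat{L}$ increases only when $\eta=0$. The idea is to exploit~(\ref{y.at.k}), which states that at each $k$-th update by~(\ref{s.out}) one has $\partial\hat{Y}^*_N(s_k,\hat{Y}_{(-1),N}(s_k))-2(\hat{\omega}_1(s_k)-\hat{\omega}_1(s_{k-1}))<\hat{Y}_{1,N}(s_k)$, i.e.\ the gap $\eta_N$ between the surface and $\hat{Y}_{1,N}$ is smaller than the increment of $2\hat{\omega}_1$ over that step at every grid point where $\hat{\sigma}_N$ increases. Combined with equicontinuity of $\hat{\omega}_1$ (so the per-step increments are $O(\Delta_\delta\hat{\omega})$) and the uniform modulus of continuity of $\hat{Y}_N$ and $\partial\hat{Y}^*_N$, this shows that on any interval where $\hat{\sigma}_N$ has positive total increase the discretized gap stays within $o(1)$ of $0$; letting $N\to\infty$ and then shrinking the mesh, any point of increase of $\hat{L}$ must be a zero of $\eta$. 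Since $(v,\ell)=(\eta,\hat{L})$ with $\kappa(s)=\partial\hat{Y}^*(s,\hat{Y}^\dagger_{(-1)}(s))-\hat{Z}_1(s)$ and $\eta$ given by~(\ref{skd.z}) then satisfies the Skorohod equation~(\ref{skd.eq})--(\ref{skd.ell}), the uniqueness statement recalled after~(\ref{skd.ell}) (equivalently Lemma~\ref{y.dag.cont}) identifies the limit uniquely; since every subsequential limit equals this same $\hat{L}$, the full sequence $\hat{\sigma}_N$ converges uniformly to $\hat{L}$. The main obstacle I anticipate is exactly the careful bookkeeping in this last step: turning the pointwise grid inequality~(\ref{y.at.k}) into a statement about the continuous-time support of $d\hat{L}$ requires controlling simultaneously the mesh-size error, the modulus of $\hat{\omega}$, and the Lipschitz/equicontinuity estimates of $\hat{Y}_N$ and $\partial\hat{Y}^*_N$, along the lines of the Euler-scheme convergence argument in Saisho and Tanemura~\cite{saisho}.
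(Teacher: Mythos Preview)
Your proposal is correct and follows essentially the same route as the paper: tightness via Proposition~\ref{y.tight} and Ascoli--Arzel\`{a}, identification of subsequential limits as solutions to~(\ref{skd})--(\ref{skd.l}) using~(\ref{y.at.k}) for the complementarity condition, and uniqueness (Lemma~\ref{y.dag.cont}) to upgrade to full convergence. The paper carries out the complementarity step you flag as the ``main obstacle'' by a concrete $\varepsilon_0$-level-set device: from~(\ref{y.at.k}) and Lemma~\ref{h.equicont} it shows $\hat{\sigma}_{N_i}(s_l)=\sum_k I_{\{0\le\eta(s_k)<\varepsilon_0\}}(\hat{\sigma}_{N_i}(s_k)-\hat{\sigma}_{N_i}(s_{k-1}))$ for large $N_i$, passes to the limit to get $\hat{\sigma}(s)=\int_0^s I_{\{0\le\eta(u)<\varepsilon_0\}}d\hat{\sigma}(u)$, and then lets $\varepsilon_0\downarrow 0$; this is exactly the bookkeeping you anticipate.
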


\begin{proof}
By Proposition~\ref{y.tight}
we can find a uniformly converging subsequence
for pairs $(\hat{Y}_{N_i},\hat{\sigma}_{N_i})$.
Clearly the limit $\hat{\sigma}$ of $\hat{\sigma}_{N_i}$
is nondecreasing, and
the limit $\hat{Y}$ satisfies
$\hat{Y}_1(s)\le\partial\hat{Y}^*(s,\hat{Y}_{(-1)}(s))$
for $0\le s\le T$.
Recall that
(\ref{y.at.k}) holds whenever
$\hat{\sigma}_{N_i}(s_k)-\hat{\sigma}_{N_i}(s_{k-1})>0$.
Furthermore,
$\partial\hat{Y}^*_{N_i}(s,\hat{Y}_{(-1),N_i}(s))$
converges uniformly to
$\partial\hat{Y}^*(s,\hat{Y}_{(-1)}(s))$
by Lemma~\ref{h.equicont}.
Thus, for arbitrary $\varepsilon_0 > 0$ we can find sufficiently
large $N_i$ so that
\begin{math}
  0\le\partial\hat{Y}^*(s_k,\hat{Y}_{(-1)}(s_k))
  -\hat{Y}_1(s_k)<\varepsilon_0
\end{math}
whenever
$\hat{\sigma}_{N_i}(s_k)-\hat{\sigma}_{N_i}(s_{k-1})>0$,
and
\begin{equation*}
  \hat{\sigma}_{N_i}(s_l)
  = \sum_{k=1}^{l}
    I_{\{
      0\le\partial\hat{Y}^*(s_k,\hat{Y}_{(-1)}(s_k))
      -\hat{Y}_{1}(s_k)<\varepsilon_0
    \}}
  \times(\hat{\sigma}_{N_i}(s_k) - \hat{\sigma}_{N_i}(s_{k-1}))
\end{equation*}
for $l=1,\ldots,N_i$.
In addition we can choose $N_i$ for arbitrary $\varepsilon_1 > 0$
such that
\begin{math}
  |\hat{\sigma}_{N_i}-\hat{\sigma}|_T
  < \left.\varepsilon_1\right/3T .
\end{math}
Therefore, we obtain
\begin{equation*}
  \left|\hat{\sigma}(s_l)
  - \sum_{k=1}^{l}
    I_{\{
      0\le\partial\hat{Y}^*(s_k,\hat{Y}_{(-1)}(s_k))
      -\hat{Y}_{1}(s_k)<\varepsilon_0
    \}}
    \times(\hat{\sigma}(s_k) - \hat{\sigma}(s_{k-1}))
    \right| < \varepsilon_1 ,
\end{equation*}
which implies that
\begin{equation*}
  \hat{\sigma}(s)
  = \int_0^s
  I_{\{0\le\partial\hat{Y}^*(u,\hat{Y}_{(-1)}(u))-\hat{Y}_1(u)<\varepsilon_0\}}
  \,d\hat{\sigma}(u) .
\end{equation*}
Since $\varepsilon_0 > 0$ is arbitrary,
the limit $\hat{\sigma}$ must satisfy
(\ref{skd.l}) with $\hat{L} = \hat{\sigma}$.
Similarly the limit $\hat{Y}$ satisfies (\ref{skd}).
By the uniqueness of solution
the whole sequence $\hat{Y}_N$ and $\hat{\sigma}_N$ must converge.
\end{proof}

We fix
\begin{math}
  (x,y^*,T)\in\mathbb{R}^n\times\mathbb{F}_1\times\mathbb{R}_+ ,
\end{math}
and construct a sequence $\hat{\sigma}_N$
of Algorithm~\ref{y.cons.alg} with
the initial state $\hat{Y}_N(0) = x$.
(i)
Provided $x\not\in\Phi_T^{-1}(y^*,\hat{\omega}')$,
we find $x\not\in\hat{Y}^*_N(0)$ for sufficiently large $N$,
and $\hat{\sigma}_N(s) = 2\hat{\omega}_1(s)$, $0\le s\le T$.
(ii)
Provided
$x\in\Phi_T^{-1}(y^*\setminus\partial y^*,\hat{\omega}')$,
we find $x\in\hat{Y}^*_N(0)$ for sufficiently large $N$.
By Proposition~\ref{y.dag.conv}
the whole sequence of $\hat{\sigma}_N$
converges uniformly to $\hat{L}$.
Thus, in either (i) or (ii)
$\hat{\sigma}_N$ converges uniformly to the backward Skorohod flow
$\hat{\omega}^{\hat{L}}$.
Since
\begin{math}
  \mathbb{W}\left(\left\{
  \hat{\omega}\in C([0,T],\mathbb{R}^n):
  x\in\Phi_T^{-1}(\partial y^*,\hat{\omega}')
  \right\}\right)=0,
\end{math}
the restriction of sample space on the cases of (i) and (ii)
does not change the result of Lemma~\ref{weak.w}.
Hence, 
we obtain the following corollary
to Proposition~\ref{y.dag.conv}.

\begin{corollary}\label{skd.flow.dist}
If a sample path $\hat{\omega}$ is distributed as
$\mathbb{W}$ on $C(\mathbb{R}_+,\mathbb{R}^n)$
then so is $\hat{\Theta}_{x,y^*,T}(\hat{\omega})$.
\end{corollary}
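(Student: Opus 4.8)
The plan is to deduce the half-line statement from its restriction to $C([0,T],\mathbb{R}^n)$, and to obtain the latter by passing to the limit in the Euler approximation of Algorithm~\ref{y.cons.alg}. First I would fix $(x,y^*,T)\in\mathbb{R}^n\times\mathbb{F}_1\times\mathbb{R}_+$ and run Algorithm~\ref{y.cons.alg} with initial state $\hat{Y}_N(0)=x$, producing for each $N$ the path $\hat{\omega}^{\hat{\sigma}_N}$ of~(\ref{w.sigma}). By Lemma~\ref{weak.w}, if $\hat{\omega}$ is distributed as $\mathbb{W}$ on $C([0,T],\mathbb{R}^n)$ then so is $\hat{\omega}^{\hat{\sigma}_N}$, for every $N$. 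The key point is then that $\hat{\omega}^{\hat{\sigma}_N}$ converges, $\mathbb{W}$-almost surely, to the backward Skorohod flow $\hat{\omega}^{\hat{L}}$ on $[0,T]$; granting this, since a sequence of identically distributed random elements that converges almost surely has the common law as its limiting law (apply bounded convergence to $\mathbf{E}[f(\hat{\omega}^{\hat{\sigma}_N})]$ for bounded continuous $f$ on $C([0,T],\mathbb{R}^n)$), the restriction of $\hat{\Theta}_{x,y^*,T}(\hat{\omega})$ to $[0,T]$ is distributed as $\mathbb{W}$.

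The almost-sure convergence is handled by the dichotomy recorded just before the statement. Outside the event $\{x\in\Phi_T^{-1}(\partial y^*,\hat{\omega}')\}$, which has $\mathbb{W}$-measure zero, one of two regimes holds: either $x\notin\Phi_T^{-1}(y^*,\hat{\omega}')$, in which case $x\notin\hat{Y}^*_N(0)$ for all large $N$, so $\hat{\sigma}_N\equiv 2\hat{\omega}_1$ and $\hat{\omega}^{\hat{\sigma}_N}=\hat{\omega}'=\hat{\omega}^{\hat{L}}$; or $x\in\Phi_T^{-1}(y^*\setminus\partial y^*,\hat{\omega}')$, in which case $x\in\hat{Y}^*_N(0)$ for all large $N$, $x\in\hat{Y}^*(0)$, and Proposition~\ref{y.dag.conv} applied with the constant sequence $x_N\equiv x$ gives that $\hat{\sigma}_N$ converges uniformly to $\hat{L}$ of~(\ref{skd})--(\ref{skd.l}), hence $\hat{\omega}^{\hat{\sigma}_N}\to\hat{\omega}^{\hat{L}}$ uniformly on $[0,T]$. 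In both regimes the limit is precisely the path $\hat{\omega}^{\hat{L}}$ entering the definition of $\hat{\Theta}_{x,y^*,T}$, so the previous paragraph yields that $\hat{\Theta}_{x,y^*,T}(\hat{\omega})$ is distributed as $\mathbb{W}$ on $C([0,T],\mathbb{R}^n)$.

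It remains to extend from $[0,T]$ to $\mathbb{R}_+$. Here $\hat{L}(s)$ for $0\le s\le T$ --- and in particular the constant $\hat{L}(T)$ used on $(T,\infty)$ --- is a measurable function of $\hat{\omega}|_{[0,T]}$. Since the increments $\hat{\omega}(s)-\hat{\omega}(T)$, $s\ge T$, form an $n$-dimensional Brownian motion independent of $\hat{\omega}|_{[0,T]}$, they are independent of $\hat{L}(T)$; and on $(T,\infty)$ the flow $\hat{\Theta}_{x,y^*,T}(\hat{\omega})$ merely translates the first coordinate by the constant $-\hat{L}(T)$, so its post-$T$ increments coincide with those of $\hat{\omega}$ and it matches continuously the value $\hat{\omega}^{\hat{L}}(T)$ at $s=T$. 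Combining the fact that $\hat{\Theta}_{x,y^*,T}(\hat{\omega})|_{[0,T]}$ has law $\mathbb{W}$ with the conditional independence and stationarity of the post-$T$ increments is exactly the decomposition of the Wiener measure on $C(\mathbb{R}_+,\mathbb{R}^n)$, so $\hat{\Theta}_{x,y^*,T}(\hat{\omega})$ is distributed as $\mathbb{W}$ there as well.

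The main obstacle I anticipate is the bookkeeping around the $\mathbb{W}$-null event and the two regimes: one must verify that the almost-sure limit of $\hat{\omega}^{\hat{\sigma}_N}$ is literally the path $\hat{\omega}^{\hat{L}}$ defining $\hat{\Theta}_{x,y^*,T}$ --- matching the $x\notin\hat{Y}^*(0)$ branch of the definition with the first regime, and invoking the uniqueness of the solution to~(\ref{skd})--(\ref{skd.l}) (Lemma~\ref{y.dag.cont}) in the second --- since only then does the ``identically distributed $+$ almost-sure convergence'' principle deliver the claimed distributional identity.
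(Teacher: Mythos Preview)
Your proposal is correct and follows essentially the same approach as the paper: the paper's argument (given in the paragraph immediately preceding the corollary) also fixes $(x,y^*,T)$, splits into the two regimes $x\notin\Phi_T^{-1}(y^*,\hat{\omega}')$ and $x\in\Phi_T^{-1}(y^*\setminus\partial y^*,\hat{\omega}')$, discards the $\mathbb{W}$-null boundary event, uses Proposition~\ref{y.dag.conv} for the uniform convergence of $\hat{\sigma}_N$ to $\hat{L}$, and concludes via Lemma~\ref{weak.w}. Your treatment is in fact more complete than the paper's, since you spell out the ``identically distributed $+$ a.s.\ convergence $\Rightarrow$ same limit law'' step and supply the extension from $[0,T]$ to $\mathbb{R}_+$ via independence of post-$T$ increments, neither of which the paper makes explicit.
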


\subsection{Dynamical systems of Liggett dual}
\label{ligg.cons.sec}

For any fixed $T > 0$
we can define a map
\begin{equation*}
  \psi_{y^*,T}(x,\hat{\omega}) =
    \Phi_T(x,\hat{\Theta}_{x,y^*,T}(\hat{\omega}))
\end{equation*}
from
\begin{math}
  (x,y^*,\hat{\omega})\in
  \mathbb{R}^n\times\mathbb{F}_1\times
  C([0,T],\mathbb{R}^n)
\end{math}
to $\mathbb{R}^n$.
When $\hat{\omega}$ is fixed and
$x\in\Phi^{-1}_T(y^*,\hat{\omega}')$,
the map $\psi_{y^*,T}(x,\hat{\omega})$
gives a solution $\hat{Y}^\dagger(T)$
to (\ref{skd})--(\ref{skd.l}).
In terms of distribution
by Corollary~\ref{skd.flow.dist}
we can immediately observe

\begin{corollary}\label{stg.y.hat}
Let $\hat{\omega}$ be distributed as $\mathbb{W}$ over
$C([0,T],\mathbb{R}^n)$, and
let $P_T$ be the Markov transition kernel for~(\ref{c.sde}).
Then we have
\begin{equation*}
  \mathbf{E}_\mathbb{W}[f(\psi_{y^*,T}(x,\hat{\omega}))]
  = \int P_T(x,dy)f(y)
\end{equation*}
for any measurable function $f$ on $\mathbb{R}^n$.
\end{corollary}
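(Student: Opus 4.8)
The plan is to recognise $\psi_{y^*,T}(x,\hat\omega)=\Phi_T(x,\hat\Theta_{x,y^*,T}(\hat\omega))$ as a composition of the strong-solution map with the backward Skorohod flow, and then to invoke the measure-preservation established in Corollary~\ref{skd.flow.dist}. First I would fix $x$ and $y^*$ and note that the restriction of $\hat\Theta_{x,y^*,T}(\hat\omega)$ to $[0,T]$ is the backward Skorohod flow $\hat\omega^{\hat L}$, which depends only on $\hat\omega|_{[0,T]}$ and is a measurable functional of it (being the a.s.\ uniform limit of the measurable maps $\hat\omega\mapsto\hat\sigma_N$ of Algorithm~\ref{y.cons.alg}, with existence and uniqueness supplied by Lemma~\ref{y.dag.cont} and Proposition~\ref{y.dag.conv}); thus $\hat\Theta_{x,y^*,T}$ may be viewed as a measurable self-map of $C([0,T],\mathbb{R}^n)$. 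Since $\Phi_T(x,\cdot)$ is itself a measurable (indeed continuous) functional of the driving path over $[0,T]$, the map $\hat\omega\mapsto f(\Phi_T(x,\hat\omega))$ is bounded and measurable whenever $f$ is.

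Next I would apply the change-of-variables formula for the pushforward measure. Corollary~\ref{skd.flow.dist} (restricted to $[0,T]$) says that $(\hat\Theta_{x,y^*,T})_*\mathbb{W}=\mathbb{W}$, so writing $G(\omega')=f(\Phi_T(x,\omega'))$,
\[
  \mathbf{E}_{\mathbb{W}}[f(\psi_{y^*,T}(x,\hat\omega))]
  =\mathbf{E}_{\mathbb{W}}[G(\hat\Theta_{x,y^*,T}(\hat\omega))]
  =\mathbf{E}_{\mathbb{W}}[G(\hat\omega)]
  =\mathbf{E}_{\mathbb{W}}[f(\Phi_T(x,\hat\omega))].
\]
It then remains to identify the last expectation with $\int P_T(x,dy)f(y)$, which is immediate from the defining property of $\Phi$: under $\hat\omega\sim\mathbb{W}$ the random variable $\Phi_T(x,\hat\omega)$ is the time-$T$ value of the diffusion solving~(\ref{c.sde}) started at $x$, hence has law $P_T(x,\cdot)$ in the sense of~(\ref{pp.x}).

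The argument is short precisely because all the analytic content---existence and uniqueness of the Skorohod solution, the equicontinuity bounds of Sections~\ref{euler.app}--\ref{hypo.sec}, and the crucial invariance of the Wiener measure under the flow---has already been packaged into Corollary~\ref{skd.flow.dist}. The only step demanding any care is the bookkeeping that lets one pass from the statement of Corollary~\ref{skd.flow.dist} on $C(\mathbb{R}_+,\mathbb{R}^n)$ to the $[0,T]$-restricted version used here, together with the observation that the degenerate event $\{x\notin\Phi_T^{-1}(y^*,\hat\omega')\}$---on which $\hat\omega^{\hat L}=\hat\omega'$---needs no separate treatment, since it is already absorbed into the definition of $\hat\Theta_{x,y^*,T}$ and into Corollary~\ref{skd.flow.dist}.
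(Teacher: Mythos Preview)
Your proposal is correct and follows exactly the approach the paper intends: the corollary is stated as an immediate consequence of Corollary~\ref{skd.flow.dist}, and your write-up simply spells out the pushforward argument ($\psi_{y^*,T}(x,\hat\omega)=\Phi_T(x,\hat\Theta_{x,y^*,T}(\hat\omega))$, then $(\hat\Theta_{x,y^*,T})_*\mathbb{W}=\mathbb{W}$, then identify the law of $\Phi_T(x,\cdot)$ under $\mathbb{W}$ with $P_T(x,\cdot)$). The additional remarks on measurability and on the handling of the set $\{x\notin\Phi_T^{-1}(y^*,\hat\omega')\}$ are accurate and do not depart from the paper's reasoning.
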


In terms of consistency of dynamical system
we obtain the following lemma.

\begin{lemma}\label{psi.cont}
If $\psi_{y^*,t}(x,\omega(t-\cdot))$
is restricted on $x\in\Phi_t^{-1}(y^*,\omega'(t-\cdot))$
for each $t$,
then it is consistent.
\end{lemma}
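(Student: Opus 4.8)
The plan is to verify Definition~\ref{consist} directly for the map $\psi_{y^*,t}$.  Suppose $(s_N,x_N)\to(s_0,x_0)$ along a monotone sequence, with each $x_N$ restricted to $\Phi_{s_N}^{-1}(y^*,\omega'(s_N-\cdot))$ so that $\hat{\omega}$ (translated) produces a genuine backward Skorohod flow, and fix $s>s_0$.  By definition
\begin{equation*}
  \psi_{y^*,s-s_N}(x_N,\omega((s-s_N)-\cdot)\,\text{shifted})
  = \Phi_{s-s_N}\bigl(x_N,\hat{\Theta}_{x_N,y^*,s-s_N}(\hat{\omega}(\cdot+s_N))\bigr),
\end{equation*}
so I must show two things converge together: (i) the backward Skorohod flow $\hat{\Theta}$, and (ii) the outer strong solution map $\Phi$.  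For (ii), Lemma~\ref{ss.consist} already gives consistency of $\Phi$, i.e.\ continuity of $\Phi_{s-s_N}(\cdot,\cdot)$ jointly in the time parameter, the starting point, and (via uniform convergence of the driving path) the noise.  So the real content is (i): showing $\hat{\Theta}_{x_N,y^*,s_N}(\hat{\omega})\to\hat{\Theta}_{x_0,y^*,s_0}(\hat{\omega})$ uniformly on compacts.

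For step (i) I would follow the pattern of Proposition~\ref{y.dag.conv} and Lemma~\ref{y.dag.cont}.  The flow $\hat{\Theta}$ is built from the solution $(\hat{Y}^\dagger,\hat{L})$ of the Skorohod-type system~(\ref{skd})--(\ref{skd.l}), whose obstacle is the hypographical surface $\partial\hat{Y}^*(\cdot,\cdot)$ of the inverse-image process $\hat{Y}^*(u)=\Phi_u(\Phi_{s_N}^{-1}(y^*,\cdot),\cdot)$.  Since $\Phi$ is consistent (Lemma~\ref{ss.consist}), the processes $\hat{Y}^*_N(\cdot)$ corresponding to the perturbed starting time $s_N$ converge, as $\mathbb{F}_1$-valued (hence hypographical-surface) processes, to $\hat{Y}^*_0(\cdot)$; by the universal Lipschitz constant $K_{\partial Y^*}$ from Definition~\ref{y.star.def} this convergence is uniform on $[0,T]$ for the surface functions, exactly as invoked in the proof of Proposition~\ref{y.dag.conv} via Lemma~\ref{h.equicont}.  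Feeding uniformly convergent obstacles and convergent initial points $x_N\to x_0$ into the Skorohod system, the stability estimate of Lemma~\ref{y.dag.cont} (together with Lemma~2.1 of~\cite{saisho} controlling $|\hat L-\hat L^\ddagger|$ by the sup of the obstacle difference) and Gronwall's inequality force $\hat{Y}^\dagger_N\to\hat{Y}^\dagger_0$ and $\hat{L}_N\to\hat{L}_0$ uniformly; hence $\hat{\omega}^{\hat L_N}\to\hat{\omega}^{\hat L_0}$ uniformly, i.e.\ the backward Skorohod flows converge.

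Having established (i) and (ii), I would combine them: the argument of $\Phi_{s-s_N}$, namely the noise path $\hat{\Theta}_{x_N,y^*,s-s_N}(\hat{\omega}(\cdot+s_N))$, converges uniformly on $[0,s-s_0]$ to $\hat{\Theta}_{x_0,y^*,s-s_0}(\hat{\omega}(\cdot+s_0))$, the time parameter $s-s_N\to s-s_0>0$, and the starting point $x_N\to x_0$, so by consistency of the strong solution (Lemma~\ref{ss.consist}) we get
\begin{equation*}
  \psi_{y^*,s-s_N}(x_N,\hat{\omega}(\cdot+s_N))
  \longrightarrow \psi_{y^*,s-s_0}(x_0,\hat{\omega}(\cdot+s_0)),
\end{equation*}
which is precisely consistency in the sense of Definition~\ref{consist}.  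I expect the main obstacle to be the bookkeeping in step (i): one must be careful that the obstacle process $\hat{Y}^*_N$ is itself time-shifted (it depends on $s_N$ both through its initial condition $\Phi_{s_N}^{-1}(y^*,\cdot)$ and through the horizon), so its convergence is not merely a restatement of Proposition~\ref{ss.cont} but needs Lemma~\ref{ss.consist} applied to the inverse map, plus the uniform Lipschitz bound to upgrade pointwise (Painlev\'e--Kuratowski) convergence of the closed sets to uniform convergence of the surface functions $\partial\hat{Y}^*_N(\cdot,\cdot)$ feeding the Skorohod equation.  The restriction $x\in\Phi_t^{-1}(y^*,\omega'(t-\cdot))$ in the hypothesis is exactly what guarantees that $\hat{Y}^\dagger$ is a bona fide solution (rather than the degenerate branch $\hat L=2\hat\omega_1$), so no separate case analysis is needed.
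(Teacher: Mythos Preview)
Your approach is correct in spirit but differs from the paper's route. The paper argues by \emph{compactness}: it first notes (via Proposition~\ref{ss.cont}) that the limit point $x_0$ automatically lies in $\Phi_{t_0}^{-1}(y^*,\omega'(t_0-\cdot))$, then invokes Proposition~\ref{y.tight} to conclude that the family of Skorohod solutions $\{\hat Y^\dagger_N\}$ is uniformly bounded and equicontinuous, extracts a convergent subsequence by Ascoli--Arzel\`a, and identifies the limit as the (unique) Skorohod solution started at $x_0$, exactly as in the proof of Lemma~\ref{ss.consist}. Your route is instead a direct \emph{stability} argument: decompose $\psi_{y^*,T}=\Phi_T(\cdot,\hat\Theta_{\cdot,y^*,T}(\cdot))$, push the perturbation in $(s_N,x_N)$ through an extension of Lemma~\ref{y.dag.cont} that also accounts for a perturbed obstacle (via Lemma~2.1 of~\cite{saisho}), and finish with the consistency of $\Phi$ from Lemma~\ref{ss.consist}. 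Your version yields an explicit Gronwall-type rate, but requires you to redo Lemma~\ref{y.dag.cont} with an extra obstacle-perturbation term and to handle the mismatched horizons $s-s_N$ versus $s-s_0$; the paper's compactness argument sidesteps both by reusing Proposition~\ref{y.tight} wholesale.

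Two small points to tighten. First, you assume throughout that the limiting Skorohod solution at $x_0$ is the ``genuine'' branch, but you never verify that $x_0$ lies in the restricted domain; this is where the paper's appeal to Proposition~\ref{ss.cont} (upper semicontinuity of $t\mapsto\Phi_t^{-1}(y^*,\omega'(t-\cdot))$) is needed, and you should cite it. Second, in your statement of step~(i) you wrote $\hat\Theta_{x_N,y^*,s_N}(\hat\omega)$ where you meant $\hat\Theta_{x_N,y^*,s-s_N}(\hat\omega(\cdot+s_N))$; the correct form appears in your concluding display, but the slip obscures that both the horizon and the noise shift vary with $N$, which is precisely the bookkeeping you flag at the end.
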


\begin{proof}
Suppose that an increasing or a decreasing sequence $\{t_N\}$
converges to $t_0$, and that
$x_N\in\Phi_{t_N}^{-1}(y^*,\omega'(t_N-\cdot))$
converges to $x_0$.
Then we must have $x_0\in\Phi_{t_0}^{-1}(y^*,\omega'(t_0-\cdot))$
by Proposition~\ref{ss.cont}.
For each pair $(t_N,x_N)$ we can find the corresponding solution
$\hat{Y}^\dagger_N(s)$, $0\le s\le t_N$,
to~(\ref{skd})--(\ref{skd.l}) of Skorohod type
starting from $\hat{Y}^\dagger_N(0)=x_N$.
By Proposition~\ref{y.tight} the approximations are
uniformly bounded and equicontinuous,
and so is $\{\hat{Y}^\dagger_N(s)\}$ on each interval
$[0,t_N\wedge t_0]$.
Similarly to the proof of Lemma~\ref{ss.consist}
we can argue that $\hat{Y}^\dagger_N(s)$ converges
uniformly on $[0,t_0)$ to
the solution $\hat{Y}^\dagger(s)$, $0\le s\le t_0$,
to~(\ref{skd})--(\ref{skd.l})
starting from $\hat{Y}^\dagger(0)=x_0$.
\end{proof}

It should be noted that
$\psi_{y^*,t}(x,\omega(t-\cdot))$
cannot be consistent for the unrestricted domain.
When
$x_N\not\in\Phi_t^{-1}(y^*,\omega'(t-\cdot))$
converges to $x_0\in\Phi_t^{-1}(y^*,\omega'(t-\cdot))$,
it is almost likely observed that
$\psi_{y^*,t}(x_0,\omega(t-\cdot))\in y^*\setminus\partial y^*$
while
$\lim_{N\to\infty}\psi_{y^*,t}(x_N,\omega(t-\cdot))\in \partial y^*$;
thus, the consistency fails.

\begin{proposition}\label{ligg.cons}
Let $\mathbb{F}_2$ be a subclass of closed subsets in
$\mathbb{R}^n$,
and let
\begin{equation*}
  D^* = \left\{
  (z^*,y^*)\in\mathbb{F}_2\times\mathbb{F}_1:
  z^*\subsetneq y^*
  \right\} .
\end{equation*}
Assuming
$Z^*(t) = \psi_{y^*,t}^{-1}(z^*,\omega(t-\cdot))$
is a lower semicontinuous $\mathbb{F}_2$-valued process
for each pair $(z^*,y^*)\in D^*$,
the Markov dynamical system
\begin{equation*}
  \Xi^*_t((z^*,y^*),\omega)
  =(\psi_{y^*,t}^{-1}(z^*,\omega(t-\cdot)),
  \Phi_t^{-1}(y^*,\omega'(t-\cdot)))
\end{equation*}
is a Liggett dual of (\ref{c.sde}) with respect to
\begin{equation}\label{f1.gamma}
  \Gamma((z^*,y^*),x) = \begin{cases}
    1 & \mbox{ if $x\in y^*\setminus z^*$; } \\
    0 & \mbox{ otherwise. }
  \end{cases}
\end{equation}
\end{proposition}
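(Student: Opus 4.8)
The plan is to verify the two clauses in the definition of Liggett dual: that $\Xi^*_t$ is a Markov dynamical system in the sense of Definition~\ref{mds}, and that
\[
\int P_t(x,dy)\,\Gamma((z^*,y^*),y)=\mathbf{E}_{\mathbb{W}}\!\left[\Gamma(\Xi^*_t((z^*,y^*),\omega),x)\right]
\]
for every $t>0$, $(z^*,y^*)\in D^*$ and $x\in\mathbb{R}^n$, with $\omega$ distributed as $\mathbb{W}$. The first clause is quick: $\Xi^*_0((z^*,y^*),\omega)=(z^*,y^*)$ since $\psi_{y^*,0}(x,\cdot)=\Phi_0(x,\cdot)=x$; the component $\Phi_t^{-1}(y^*,\omega'(t-\cdot))$ is a continuous $\mathbb{F}_1$-valued process by Proposition~\ref{ss.cont}; the component $\psi_{y^*,t}^{-1}(z^*,\omega(t-\cdot))$ is upper semicontinuous by Theorem~\ref{h.cont} together with the consistency of $\psi_{y^*,t}$ on its domain (Lemma~\ref{psi.cont}), and lower semicontinuous by hypothesis, hence continuous; and the cocycle identity $\Xi^*_t(\cdot,\omega)=\Xi^*_{t-s}(\Xi^*_s(\cdot,\omega),\omega(\cdot+s))$ is inherited from the flow properties of $\Phi$ and the concatenation property of the backward Skorohod flow $\hat\Theta$. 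The coffin-state convention is consistent with the formula for $\Gamma$, because I show below that $Z^*(t)\subseteq Y^*(t)$ always, so $\Gamma((z^*,y^*),\cdot)$ already vanishes once the dual process reaches $\partial$.

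For the duality I would evaluate each side as a $\mathbb{W}$-probability and match the two events. On the left, apply Corollary~\ref{stg.y.hat} with $f=\Gamma((z^*,y^*),\cdot)$ and a $\mathbb{W}$-distributed path $\hat\omega$ to get $\int P_t(x,dy)\,\Gamma((z^*,y^*),y)=\mathbb{W}\big(\psi_{y^*,t}(x,\hat\omega)\in y^*\setminus z^*\big)$. The crucial deterministic fact, to be read off from Section~\ref{skd.def}, is that $\{\psi_{y^*,t}(x,\hat\omega)\in y^*\}=\{x\in\Phi_t^{-1}(y^*,\hat\omega')\}$ for each fixed $\hat\omega$: writing $\hat Y^*(s)=\Phi_s(\Phi_t^{-1}(y^*,\hat\omega'),\hat\omega')$ one has $\hat Y^*(0)=\Phi_t^{-1}(y^*,\hat\omega')$ and $\hat Y^*(t)=y^*$, and if $x\in\hat Y^*(0)$ then $\psi_{y^*,t}(x,\hat\omega)=\hat Y^\dagger(t)$ for the solution $(\hat Y^\dagger,\hat L)$ of (\ref{skd})--(\ref{skd.l}), whose accompanying Skorohod equation forces $\hat Y^\dagger_1(s)\le\partial\hat Y^*(s,\hat Y^\dagger_{(-1)}(s))$ for all $s$, i.e.\ $\hat Y^\dagger(s)\in\hat Y^*(s)$, so in particular $\hat Y^\dagger(t)\in\hat Y^*(t)=y^*$; if instead $x\notin\hat Y^*(0)$ then $\hat\Theta_{x,y^*,t}(\hat\omega)$ coincides with $\hat\omega'$ on $[0,t]$ and $\psi_{y^*,t}(x,\hat\omega)=\Phi_t(x,\hat\omega')\notin y^*$. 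Consequently the left side equals $\mathbb{W}\big(x\in\Phi_t^{-1}(y^*,\hat\omega'),\ \psi_{y^*,t}(x,\hat\omega)\notin z^*\big)$.

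On the right, $\Gamma(\Xi^*_t((z^*,y^*),\omega),x)$ is the indicator of $\{x\in Y^*(t)\setminus Z^*(t)\}$ with $Y^*(t)=\Phi_t^{-1}(y^*,\omega'(t-\cdot))$ and $Z^*(t)=\psi_{y^*,t}^{-1}(z^*,\omega(t-\cdot))$. Since $Y^*(t)$ is exactly the set $\hat Y^*(0)$ associated with the reversed path $\omega(t-\cdot)$, the fact just proved shows $\psi_{y^*,t}(x,\omega(t-\cdot))\in y^*$ for $x\in Y^*(t)$, whence $Z^*(t)\subseteq Y^*(t)$; and $\psi_{y^*,t}(x,\omega(t-\cdot))$ is legitimate and consistent precisely on $\{x\in Y^*(t)\}$ (Lemma~\ref{psi.cont}), where $x\notin Z^*(t)$ is equivalent to $\psi_{y^*,t}(x,\omega(t-\cdot))\notin z^*$, while off $\{x\in Y^*(t)\}$ the indicator vanishes. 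So the right side is $\mathbb{W}\big(x\in\Phi_t^{-1}(y^*,\omega'(t-\cdot)),\ \psi_{y^*,t}(x,\omega(t-\cdot))\notin z^*\big)$. Finally, $\Phi_t(x,\cdot)$, $\hat\Theta_{x,y^*,t}$ and hence $\psi_{y^*,t}(x,\cdot)$ depend on their path argument only through its increments; subtracting the constant $\omega(t)$ I may replace $\omega(t-\cdot)$ everywhere by the path $s\mapsto\omega(t-s)-\omega(t)$, which is again $\mathbb{W}$-distributed on $C([0,t],\mathbb{R}^n)$. The right side then becomes the $\mathbb{W}$-probability of precisely the event obtained for the left side, so the two agree.

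The main obstacle is the middle step: making rigorous that the Skorohod-reflected trajectory $\hat Y^\dagger$ never leaves the closed Lipschitz-hypographic region $\hat Y^*(\cdot)$ — this is where the hypographic structure of $\mathbb{F}_1$ and the solvability and uniqueness theory for (\ref{skd})--(\ref{skd.l}) from Section~\ref{skd.def} are essential — and then handling the domain restriction forced by Lemma~\ref{psi.cont}, namely that $\psi_{y^*,t}$ is consistent and $Z^*(t)$ is legitimately an $\mathbb{F}_2$-valued process only when $x$ is confined to $Y^*(t)$, by treating separately on both sides the region where the duality function vanishes. The cocycle identity needed for $\Xi^*_t$ to be a Markov dynamical system is routine once the concatenation property of $\hat\Theta$ is granted.
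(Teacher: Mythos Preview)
Your proof is correct and follows the same route as the paper's: the paper's argument is the two-line observation that the pointwise identity $\Gamma((z^*,y^*),\psi_{y^*,t}(x,\omega(t-\cdot)))=\Gamma(\Xi^*_t((z^*,y^*),\omega),x)$ holds ``clearly'' and then applies Corollary~\ref{stg.y.hat}. You have simply unpacked that identity in full---the case split on $x\in Y^*(t)$, the Skorohod confinement $\hat Y^\dagger(s)\in\hat Y^*(s)$, the resulting inclusion $Z^*(t)\subseteq Y^*(t)$, and the increment-invariance needed to identify $\omega(t-\cdot)$ with a $\mathbb{W}$-path---and additionally supplied the continuity/cocycle check for $\Xi^*_t$ that the paper leaves implicit.
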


\begin{proof}
Let $x\in D$ and $(z^*,y^*)\in D^*$ be arbitrarily fixed.
Then $\psi_{y^*,t}$ clearly satisfies
\begin{equation*}
  \Gamma((z^*,y^*),\psi_{y^*,t}(x,\omega(t-\cdot)))
  = \Gamma(\Xi^*_t((z^*,y^*),\omega),x) .
\end{equation*}
By Corollary~\ref{stg.y.hat} we obtain
\begin{align*}
  & \int P_t(x,dy)\Gamma((z^*,y^*),y)
  \\ & \hspace{2.5ex}
  = \mathbf{E}_{\mathbb{W}}[
    \Gamma((z^*,y^*),\psi_{y^*,t}(x,\omega(t-\cdot)))]
  = \mathbf{E}_{\mathbb{W}}[
    \Gamma(\Xi^*_t((z^*,y^*),\omega),x)] ,
\end{align*}
as desired.
\end{proof}

Proposition~\ref{ligg.cons}
generates a pair $(Z^*(t),Y^*(t))$ of closed sets
on the state space $D^*$
until the absorbing time
$\zeta = \inf\{t\ge 0: Z^*(t)=Y^*(t)\}$.
When restricted as in Lemma~\ref{psi.cont}
the Skorohod map $\psi_{y^*,t}(\cdot,\omega(t-\cdot))$
is continuous but not bijective;
thus, $Z^*(t) = \psi_{y^*,t}^{-1}(x^*,\omega(t-\cdot))$
could be absorbed into the empty set $\varnothing$,
a coffin state of $\mathbb{F}_2$.
For all the examples of Section~\ref{m1.sec}
we can set
$\mathbb{F}_2 = \mathbb{F}_1$ in Proposition~\ref{ligg.cons},
and observe that
\begin{math}
  Z^*(t) = \Phi_t^{-1}(z^*,\omega(t-\cdot))
\end{math}
until the absorbing time $\zeta$.
Therefore,
the Markov dynamical system
\begin{equation*}
  \Xi^*_t((z^*,y^*),\omega)
  =(\Phi_t^{-1}(z^*,\omega(t-\cdot)),
  \Phi_t^{-1}(y^*,\omega'(t-\cdot)))
\end{equation*}
becomes a Liggett dual
in the following examples.

\begin{example}\label{r1.ex.ligg}
In Example~\ref{r1.ex} the hypographical surface
$\partial Z^*(t,\cdot)\equiv Z_1(t)$
is determined by
$dZ_1(t) = \beta_1(Z_1(t))dt + dW_1(t)$.
Therefore, a Liggett dual of Proposition~\ref{ligg.cons}
is formed by a stochastic process $(Z_1(t),Y_1(t))$
with respect to (\ref{bm.gamma})
on the dual state space~(\ref{bm.d}).
The two SDE's of $(Z_1(t),Y_1(t))$
correspond to the differential operator
\begin{equation}\label{r1.b}
  \mathcal{B}f = \left(
  \beta_1(y)\frac{\partial}{\partial y}
  + \beta_1(z)\frac{\partial}{\partial z}
  \right)\! f
  + \frac{1}{2}\left(
  \frac{\partial}{\partial y}
  - \frac{\partial}{\partial z}
  \right)^2\!\!f .
\end{equation}
with $f(z,y)$ tending to zero as
$(z,y)$ approaches the boundary
$\{(y,y)\in\mathbb{R}^2:y\in\mathbb{R}\}$.
Hence, the Liggett dual of Theorem~\ref{bm.cons}
is viewed as a special case of Proposition~\ref{ligg.cons}.
\end{example}

\begin{example}\label{r2.ex.ligg}
In Example~\ref{r2.ex}
the pair
\begin{math}
  \partial Y^*(t,x_2)
  = (U_1(t)/U_2(t))(x_2-Y_2(t))+Y_1(t)
\end{math}
and
\begin{math}
  \partial Z^*(t,x_2)
  = (U_1(t)/U_2(t))(x_2-Z_2(t))+Z_1(t)
\end{math}
of hypographical surfaces share the common direction
determined by $dU(t) = \beta(U(t))dt$.
Thus, the Liggett dual is formulated by
the triplet $(U(t),Z(t),Y(t))$
of $\mathbb{R}^2$-valued processes
on a dual state space
\begin{equation*}
  D^* = \big\{
  (u,z,y)\in\mathbb{R}^6:
  \langle[u_2,-u_1]^T,y-z\rangle > 0,\,|u_1|< u_2
  \big\}.
\end{equation*}
We can set a duality function
$\Gamma((u,z,y),x) = 1$
if $\langle[u_2,-u_1]^T,y-x\rangle \ge 0$ 
and $\langle[u_2,-u_1]^T,x-z\rangle > 0$;
otherwise, $\Gamma((u,z,y),x) = 0$.
Here the governing SDE's correspond to
the differential operator
\begin{align*}
  \mathcal{B}f
  & = \left(
  u_2\frac{\partial}{\partial u_1}
  + u_1\frac{\partial}{\partial u_2}
  + y_2\frac{\partial}{\partial y_1}
  + y_1\frac{\partial}{\partial y_2}
  + z_2\frac{\partial}{\partial z_1}
  + z_1\frac{\partial}{\partial z_2}
  \right)\!f
  \\ & \hspace{5.5ex}
  + \frac{1}{2}\left(
  \frac{\partial}{\partial y_1}
  - \frac{\partial}{\partial z_1}
  \right)^2\!\!f
  + \frac{1}{2}\left(
  \frac{\partial}{\partial y_2}
  + \frac{\partial}{\partial z_2}
  \right)^2\!\!f
\end{align*}
with $f(u,z,y)$ tending to zero as
$(u,z,y)$ approaches the boundary
of
\begin{math}
  \bar{D}^* = \{(u,z,y)\in\mathbb{R}^6:
  \langle[u_2,-u_1]^T,y-z\rangle \ge 0,
  \,|u_1|< u_2\}.
\end{math}
\end{example}

\begin{example}\label{b.model.ligg}
We set $\theta=\pi/2$ in Example~\ref{b.model}.
Then the pair $\partial Y^*(t) = H + Y(t)$
and $\partial Z^*(t) = H + Z(t)$ of hypographical surfaces is
determined by $(Z(t),Y(t))$ on a dual state space
\begin{equation*}
  D^* = \{(z,y)\in\mathbb{R}^{2n}:
  \langle d,y-z\rangle > 0\} .
\end{equation*}
We can introduce a duality function
$\Gamma((z,y),x) = 1$ if
$\langle d,y-x\rangle \ge 0$
and $\langle d,x-z\rangle > 0$;
otherwise, $\Gamma((z,y),x) = 0$.
Then $(Z(t),Y(t))$ provides a Liggett dual,
and it is governed by
\begin{align*}
  \mathcal{B}f
  & = \sum_{i=1}^n\left(
  \beta_i(y)\frac{\partial}{\partial y_i}
  + \beta_i(z)\frac{\partial}{\partial z_i}
  \right)\!f
  \\ & \hspace{5.5ex}
  + \frac{1}{2}\left(
  \frac{\partial}{\partial y_1}
  - \frac{\partial}{\partial z_1}
  \right)^2\!\!f
  + \frac{1}{2}\sum_{i=2}^n\left(
  \frac{\partial}{\partial y_i}
  + \frac{\partial}{\partial z_i}
  \right)^2\!\!f
\end{align*}
with $f(z,y)$ vanishing as $(z,y)$ approaches
the boundary $\partial D^*$.
\end{example}

In Example~\ref{b.model.ligg} we may choose
$\theta_0<\theta<\pi/2$ for $\mathbb{F}_1$,
and set an initial hyperplane $\partial Y^*(0) = H_1\in\mathbb{F}_1$
not parallel to $H$.
Proposition~\ref{ligg.cons} is applicable
by setting $\mathbb{F}_2 = \mathbb{F}_0\cup\{\varnothing\}$,
but the exact sample path $(Z^*(t),Y^*(t))$ of Liggett dual
is no longer tractable.

\section{Forward Skorohod flow}
\label{impute.sec}

Let $T>0$ be fixed.
Similarly to Section~\ref{skd.def}
we consider a forward process
\begin{equation}\label{sde.fwd}
  X(t) = \Phi_t^{-1}(X(0),\omega(t-\cdot)),
  \quad 0\le t\le T,
\end{equation}
starting from $X(0) = \Phi_T(x,\omega(T-\cdot))$
so that it terminates at $X(T) = x$.
Provided $X(0)\in y^*$,
we can determine a sample path
\begin{equation}\label{skd.w.fwd}
  \omega^L(t) = [(\omega_1-L)(t),\omega_{(-1)}(t)]
\end{equation}
by forming an increasing process
\begin{equation}\label{skd.l.fwd}
  L(t)
  = \int_0^t I_{\{X(v)\in\partial\Phi_v^{-1}(y^*,(\omega^L)'(v-\cdot))\}}
  dL(v),
\end{equation}
so that $\omega^L$ satisfies
\begin{equation*}
  X(t) \in \Phi_t^{-1}(y^*,(\omega^L)'(t-\cdot))
\end{equation*}
for $0\le t\le T$.
By setting $L(t) = 2\omega_1(t)$
and $\omega^{L}(t) = \omega'(t)$
for $0\le t\le T$
if $X(0)\not\in y^*$,
we can extend $\omega^{L}$ to a map
\begin{equation*}
  \Theta_{x,y^*,T}(\omega) =
  \begin{cases}
    \omega^{L}(t)
    & \mbox{ if $0\le t\le T$; } \\
    [\omega_1(t)-L(T),\omega_{(-1)}(t)]
    & \mbox{ if $t > T$, }
  \end{cases}
\end{equation*}
from
\begin{math}
  (x,y^*,T,\hat{\omega})\in
  \mathbb{R}^n\times\mathbb{F}_1\times\mathbb{R}_+\times
  C(\mathbb{R}_+,\mathbb{R}^n)
\end{math}
to $C(\mathbb{R}_+,\mathbb{R}^n)$.

Assuming a backward sample path $\hat{\xi}(s)$, $0\le s\le T$,
and $x\in\Phi^{-1}_T\big(y^*,\hat{\xi}'\big)$,
we can find a backward Skorohod flow $\hat{\xi}^{\hat{L}}$
to (\ref{skd})--(\ref{skd.l}).
Then a solution to (\ref{skd.w.fwd})--(\ref{skd.l.fwd}) exists
if the time-reversed process of (\ref{sde.fwd}) is
$\hat{X}(s) = \Phi_s\big(x,\hat{\xi}^{\hat{L}}\big)$,
$0\le s\le T$.
In fact, we set
$\omega(t) = \hat{\xi}^{\hat{L}}(T-t) - \hat{\xi}^{\hat{L}}(T)$,
and obtain
the solution of $L(t) = \hat{L}(T)-\hat{L}(T-t)$
and $\omega^L(t) = \hat{\xi}(T-t)-\hat{\xi}(T)$.
Thus, we can appropriately call
$\omega^L$ a \emph{forward Skorohod flow}.

\subsection{A coupled approximation of forward process}
\label{s.ligg.sec}

In order to construct a forward Skorohod flow
by approximation,
we take Algorithm~\ref{y.cons.alg} and run steps forward in time.
Here we start with an entire path $X_N(t)$ of approximation by
(\ref{x.fwd}) with sample path $\omega$,
in which $X_N(0)$ converges to $X(0)$.
Then we build $\sigma_N(t)$ forward,
and generate the sample path
\begin{equation*}
  \omega^{\sigma_N}(t)
  = [(\omega_1-\sigma_N)(t),\omega_{(-1)}(t)] ,
\end{equation*}
which is used to approximate $Y^*_N(t)$ recursively.

\begin{algorithm}\label{s.ligg.alg}
Set the initial values
\begin{equation*}
  Y^*_N(0) = y^*;
  \quad
  U_N(0) = \partial y^*(X_{(-1),N}(0)),
\end{equation*}
and $\sigma_N(0)=0$ at $t_0 = 0$.
At the $j$-th step,
provided $U_N(t_{j-1})$ and $\sigma_N(t_{j-1})$,
(i) set for $t_{j-1}<t\le t_j$
\begin{equation}\label{fwd.s.out}
  \sigma_N(t) = \sigma_N(t_{j-1})
  + 2(\omega_1(t)-\omega_1(t_{j-1}))
\end{equation}
if
\begin{equation*}
  X_{1,N}(t_j) - \beta_1(X_N(t_j))(t_j-t_{j-1})
  + |\omega_1(t_j) - \omega_1(t_{j-1})|
  > U_N(t_{j-1})
\end{equation*}
otherwise, set
\begin{equation}\label{fwd.s.in}
  \sigma_N(t) \equiv \sigma_N(t_{j-1}) .
\end{equation}
(ii) Update
\begin{align*}
  \nonumber
  Y^*_N(t_j)
  & = \phi_{t_j-t_{j-1}}^{-1}(Y^*_N(t_{j-1}),
  (\omega^{\sigma_N})'(t_j-\cdot));
  \\
  U_N(t_j)
  & = \partial Y^*(t_j,X_{(-1),N}(t_j))
\end{align*}
at $t = t_j$.
\end{algorithm}

\begin{remark}\label{s.ligg.rem}
We generate $\hat{X}_N(s)$ by (\ref{x.euler}),
$0\le s\le T$,
and view the time-reversed $X_N(t) = \hat{X}_N(T-t)$
as if it were constructed by (\ref{x.fwd}),
for which we set
$\hat{\omega}(s) = \omega(T-s) - \omega(T)$,
$0\le s\le T$.
The update by (\ref{fwd.s.out}) in Algorithm~\ref{s.ligg.alg}
is determined by $X_N(t_j)$, $X_{(-1),N}(t_{j-1})$,
$|\omega_1(t_j)-\omega_1(t_{j-1})|$, and $\partial Y^*(t_{j-1})$,
which is generated by the sample path $\hat{\omega}(s)$,
$0\le s\le T-t_{j-1}$
and $\omega^{\sigma_N}(t)$, $0\le t\le t_{j-1}$.
In the context of Algorithm~\ref{y.cons.alg}
we can view $\partial Y^*_N(t_{j-1})$
as if it were generated by sample path $\xi = \omega^{\sigma_N}$,
and $\hat{X}_N(T-t_{j-1}) = X_N(t_{j-1})$
as if it were updated backward
with $\hat{\xi}^{\hat{\sigma}_N}(s)$, $0\le s\le T-t_{j-1}$,
for which we set
$\hat{\sigma}_N(s) = \sigma_N(T) - \sigma_N(T-s)$.
Hence a version of Lemma~\ref{weak.w} can be argued
for $\omega^{\sigma_N}(t)$,
and the following lemma is similarly established.
\end{remark}

\begin{lemma}\label{weak.fwd.w}
Let $X_N(t)$ be the time-reversed one to
$\hat{X}_N(s)$ of Remark~\ref{s.ligg.rem}.
If $\omega$
is distributed as $\mathbb{W}$ over $C([0,T],\mathbb{R}^n)$
then so is
$\omega^{\sigma_N}$ of Algorithm~\ref{s.ligg.alg}.
\end{lemma}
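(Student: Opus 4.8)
The plan is to run the proof of Lemma~\ref{weak.w} with the forward grid $0 = t_0 < t_1 < \cdots < t_N = T$ in place of the backward grid, reading off from Remark~\ref{s.ligg.rem} exactly which portion of the path each trigger of Algorithm~\ref{s.ligg.alg} is allowed to depend on. Concretely, for $j = 0, 1, \ldots, N$ I would introduce the interpolated sample path
\begin{equation*}
  \omega(t;j) = \begin{cases}
    \omega^{\sigma_N}(t) & \mbox{ if $0 \le t \le t_j$; } \\
    [\omega_1(t) - \sigma_N(t_j),\, \omega_{(-1)}(t)] & \mbox{ if $t_j < t \le T$, }
  \end{cases}
\end{equation*}
which is continuous since $\sigma_N$ is piecewise constant with $\sigma_N(0) = 0$, and which satisfies $\omega(\cdot;0) = \omega$ and $\omega(\cdot;N) = \omega^{\sigma_N}$ on $[0,T]$. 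The lemma follows once we prove, by induction on $j$, that $\omega(\cdot;j)$ is distributed as $\mathbb{W}$.

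For the inductive step, assume $\omega(\cdot;j-1) \sim \mathbb{W}$. From~(\ref{fwd.s.out})--(\ref{fwd.s.in}) one checks that $\omega(\cdot;j)$ is obtained from $\omega(\cdot;j-1)$ by leaving the restriction to $[0, t_{j-1}]$ untouched, leaving all $(-1)$-coordinates unchanged and the first-coordinate \emph{increments} on $(t_j, T]$ unchanged (the path there being merely re-anchored at the new value at $t_j$), and replacing the first-coordinate increment over the cell $(t_{j-1}, t_j]$ by
\begin{equation*}
  \omega_1(t;j) - \omega_1(t_{j-1};j)
  = \pm\bigl(\omega_1(t;j-1) - \omega_1(t_{j-1};j-1)\bigr),
  \quad t_{j-1} < t \le t_j,
\end{equation*}
with the sign $-$ precisely when the trigger in step~(i) of Algorithm~\ref{s.ligg.alg} fires at stage $j$, and $+$ otherwise. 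The crux — which is what Remark~\ref{s.ligg.rem} records — is that this trigger is a measurable function of data that does not involve the signed increment $\omega_1(t_j) - \omega_1(t_{j-1})$: the quantities $X_{1,N}(t_j)$ and $\beta_1(X_N(t_j))$ come, via $X_N(t_j) = \hat{X}_N(T - t_j)$, from the increments of $\omega$ on $[t_j, T]$ only; the quantity $U_N(t_{j-1}) = \partial Y^*_N(t_{j-1}, X_{(-1),N}(t_{j-1}))$ combines $\partial Y^*_N(t_{j-1})$, built from $\omega^{\sigma_N}$ on $[0, t_{j-1}]$, with $X_{(-1),N}(t_{j-1}) = \hat{X}_{(-1),N}(T - t_{j-1})$, which by the coupling structure of the plain Euler recursion~(\ref{x.euler}) depends on $\omega_1$ only through its increments on $[t_j, T]$ and on $\omega_{(-1)}$ on $[t_{j-1}, T]$; and the remaining ingredient is the magnitude $|\omega_1(t_j) - \omega_1(t_{j-1})|$ alone. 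Hence the trigger, together with everything in $\omega(\cdot;j-1)$ lying outside the cell $(t_{j-1}, t_j]$, is invariant under reflecting $\omega_1$ on $(t_{j-1}, t_j]$ about $\omega_1(t_{j-1})$. Since under $\omega(\cdot;j-1) \sim \mathbb{W}$ the increment $\omega_1(t_j;j-1) - \omega_1(t_{j-1};j-1)$ has conditional law symmetric about $0$ given all the data the trigger sees, performing or not performing that reflection on the event prescribed by the trigger leaves the law of $\omega(\cdot;j)$ equal to $\mathbb{W}$. This closes the induction, and $j = N$ gives the claim.

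I expect the main obstacle to be the careful verification of the data-dependence assertion underlying Remark~\ref{s.ligg.rem}: one must track that, in the plain backward Euler scheme behind $X_N(t) = \hat{X}_N(T - t)$, the $(-1)$-coordinates over the time cell matching $(t_{j-1}, t_j]$ are advanced using only the corresponding increment of $\omega_{(-1)}$ (the first coordinate entering solely through the drift evaluated at the left endpoint), so that neither $X_{(-1),N}(t_{j-1})$ nor $U_N(t_{j-1})$ ever sees the signed first-coordinate increment of that cell. Once that is in hand, the reflection argument and the continuous re-glueing step are exactly those used in the proof of Lemma~\ref{weak.w}, and the time-reversal correspondence of Remark~\ref{s.ligg.rem} supplies the rest of the translation.
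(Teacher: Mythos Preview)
Your proposal is correct and follows precisely the route the paper indicates: the paper does not write out a separate proof of this lemma but says in Remark~\ref{s.ligg.rem} that ``a version of Lemma~\ref{weak.w} can be argued for $\omega^{\sigma_N}(t)$, and the following lemma is similarly established,'' and your interpolated paths $\omega(\cdot;j)$, inductive reflection argument, and dependence-tracking are exactly the forward-time translation of the proof of Lemma~\ref{weak.w} that Remark~\ref{s.ligg.rem} has in mind. Your verification that $X_{(-1),N}(t_{j-1})$ sees $\omega_1$ only through its increments on $[t_j,T]$ (via $\beta$ evaluated at $\hat{X}_N(s_{N-j}) = X_N(t_j)$ in the Euler step) is the one point the paper leaves implicit, and you have it right.
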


In addition to the construction of $Y^*_N(t_j)$ in
Algorithm~\ref{s.ligg.alg}
we can introduce a series of estimates $Y_N(t_j)$
in an attempt to predict the point $[U_N(t_j),X_{(-1),N}(t_j)]$
for $j=1,\ldots,N$.

\begin{algorithm}\label{u.traj.alg}
If $j=1$ or $\sigma_N(t)$ on the $j$-th interval $(t_{j-1},t_j]$
is updated by~(\ref{fwd.s.out}) then restart
\begin{equation*}
  Y_N(t_j) = \phi_{t_j-t_{j-1}}^{-1}\left(
  [U_N(t_{j-1}),X_{(-1),N}(t_{j-1})],
  (\omega^{\sigma_N})'(\cdot+t_{j-1})
  \right)
\end{equation*}
with the exact point $[U_N(t_{j-1}),X_{(-1),N}(t_{j-1})]$.
Otherwise [i.e., $\sigma_N(t_j)$ is updated
by~(\ref{fwd.s.in})], set
\begin{equation*}
  Y_N(t_j) = \phi_{t_j-t_{j-1}}^{-1}\left(
  Y_N(t_{j-1}),(\omega^{\sigma_N})'(\cdot+t_{j-1})
  \right) .
\end{equation*}
\end{algorithm}

In what follows we assume a Lipschitz constant
$K_{\partial Y^*}\ge 1$
for $\partial Y_N^*(t_j)$ regardless of $t_j$ and $N$.
Then we can evaluate the proximity of $Y_{1,N}(t_j)$ to the
height $U_N(t_j)$ of the surface
$\partial Y_N^*(t_j)$ at $X_{(-1),N}(t_j)$.

\begin{lemma}\label{u.approx}
Assume that there is the last update of $\sigma_N(t_j)$
by~(\ref{fwd.s.out}) at the interval $(t_{j-1},t_j]$
before $t_k$ in Algorithm~\ref{s.ligg.alg}.
Then we have
\begin{align*}
  \nonumber &
  |Y_{1,N}(t_k) - U_N(t_k)|
  \\ & \hspace{2.5ex}
  \le K_{\partial Y^*}K_\beta\Big(
  \max_{0\le i\le N}\|Y_N(t_i)\|
  + \max_{0\le i\le N}\|X_N(t_i)\|
  \Big)(t_k-t_{j-1})
\end{align*}
\end{lemma}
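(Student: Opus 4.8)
The plan is to exploit that $Y_N(t_k)$, by construction, lies on the hypographical surface $\partial Y^*_N(t_k)$, so that
\begin{math}
  Y_{1,N}(t_k)=\partial Y^*_N(t_k,Y_{(-1),N}(t_k)),
\end{math}
and then to turn $|Y_{1,N}(t_k)-U_N(t_k)|$, through the uniform Lipschitz constant $K_{\partial Y^*}$, into a bound on the horizontal displacement $\|Y_{(-1),N}(t_k)-X_{(-1),N}(t_k)\|$, which is in turn governed only by accumulated drift.

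First I would verify $Y_N(t_i)\in\partial Y^*_N(t_i)$ for $i=j,\ldots,k$. The restart point $[U_N(t_{j-1}),X_{(-1),N}(t_{j-1})]$ used at $t_{j-1}$ lies on $\partial Y^*_N(t_{j-1})$ since $U_N(t_{j-1})=\partial Y^*_N(t_{j-1},X_{(-1),N}(t_{j-1}))$. Over $(t_{j-1},t_j]$ the increment $\sigma_N$ is updated by~(\ref{fwd.s.out}), so Algorithm~\ref{u.traj.alg} restarts $Y_N$ from that surface point; over each later interval up to $t_k$ the update is by~(\ref{fwd.s.in}) and $Y_N$ is propagated by the implicit-Euler step $\phi^{-1}_{t_i-t_{i-1}}$. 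Since the set update~(\ref{h.euler}) that drives $\partial Y^*_N$ in Algorithm~\ref{s.ligg.alg} is, at the grid points, the same implicit-Euler step as the point update~(\ref{x.fwd}), a point of $\partial Y^*_N(t_{i-1})$ is carried by this step onto $\partial Y^*_N(t_i)$; hence $Y_N(t_i)$ stays on $\partial Y^*_N(t_i)$. In particular $Y_{1,N}(t_k)=\partial Y^*_N(t_k,Y_{(-1),N}(t_k))$, while $U_N(t_k)=\partial Y^*_N(t_k,X_{(-1),N}(t_k))$ by Algorithm~\ref{s.ligg.alg}(ii).

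The Lipschitz property of the hypographical surface then gives
\begin{math}
  |Y_{1,N}(t_k)-U_N(t_k)|\le K_{\partial Y^*}\|Y_{(-1),N}(t_k)-X_{(-1),N}(t_k)\|,
\end{math}
so it remains to bound the displacement. Since $\sigma_N$ alters only the first coordinate, over each of $(t_{j-1},t_j],\ldots,(t_{k-1},t_k]$ the processes $Y_{(-1),N}$ and $X_{(-1),N}$ obey implicit-Euler recursions driven by the \emph{same} noise increments, and they agree at $t_{j-1}$ (the restart point has $(-1)$-coordinate $X_{(-1),N}(t_{j-1})$). Subtracting the recursions, the noise cancels and only the drift difference accumulates telescopically:
\begin{equation*}
  \|Y_{(-1),N}(t_k)-X_{(-1),N}(t_k)\|
  \le\sum_{i=j}^{k}\|\beta(Y_N(t_i))-\beta(X_N(t_i))\|(t_i-t_{i-1})
  \le K_\beta\sum_{i=j}^{k}\|Y_N(t_i)-X_N(t_i)\|(t_i-t_{i-1}).
\end{equation*}
Estimating $\|Y_N(t_i)-X_N(t_i)\|\le\max_{0\le i\le N}\|Y_N(t_i)\|+\max_{0\le i\le N}\|X_N(t_i)\|$, using $\sum_{i=j}^{k}(t_i-t_{i-1})=t_k-t_{j-1}$, and reinserting the factor $K_{\partial Y^*}$ yields the asserted inequality.

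I expect the first step to be the main obstacle: establishing that $Y_N(t_i)$ remains on $\partial Y^*_N(t_i)$ requires matching the single-step update of Algorithm~\ref{u.traj.alg} against the surface update~(\ref{h.euler}) driven by $(\omega^{\sigma_N})'$, and in particular reconciling the orientation of the noise increments so that the noise shared by $X_{(-1),N}$ and $Y_{(-1),N}$ in the displacement estimate is genuinely common. Once that identification is in place, the remaining steps are routine summations.
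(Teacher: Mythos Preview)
Your proposal is correct and follows essentially the same approach as the paper: the paper's proof writes the implicit-Euler sum for $Y_N(t_k)$, asserts that $Y_N(t_k)\in\partial Y^*_N(t_k)$, applies the Lipschitz constant $K_{\partial Y^*}$ to reduce to $\|Y_{(-1),N}(t_k)-X_{(-1),N}(t_k)\|$, and bounds this by the drift-difference sum exactly as you do. Your justification that $Y_N$ remains on $\partial Y^*_N$ (matching the implicit-Euler step for the point against that for the surface) is in fact more detailed than what the paper provides, and your concern about reconciling the noise orientation is the right place to be careful, but once the $(-1)$-coordinates of $\omega$ and $(\omega^{\sigma_N})'$ are seen to coincide the cancellation goes through.
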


\begin{proof}
Similarly to (\ref{x.sum.fwd})
we can formulate $Y_N(t_k)$ implicitly by
\begin{align*}
  Y_N(t_k) & = [U_N(t_{j-1}),X_{(-1),N}(t_{j-1})]
  \\ &
  + \sum_{i=j}^k \beta(Y_N(t_i))(t_i-t_{i-1})
  + (\omega^{\sigma_N})'(t_k) - (\omega^{\sigma_N})'(t_{j-1}) .
\end{align*}
Since $Y_{N}(t_k)\in \partial Y^*_N(t_k)$,
$|Y_{1,N}(t_k) - U_N(t_k)|$ is bounded by
\begin{align*}
  & K_{\partial Y^*}\left\|Y_{(-1),N}(t_k)-X_{(-1),N}(t_k)\right\|
  \\ & \hspace{2.5ex}
  \le K_{\partial Y^*}\Big\|
  \sum_{i=j}^k\beta_{(-1)}(Y_N(t_i))(t_i-t_{i-1})
  - \sum_{i=j}^k\beta_{(-1)}(X_N(t_i))(t_i-t_{i-1})
  \Big\|
\end{align*}
which is further bounded by the one as desired.
\end{proof}

Under the assumption of Lemma~\ref{u.approx}
we can observe that $\|X_N(t_i)\|$ and $\|Y_N(t_i)\|$
are not far apart for $i=j,\ldots,k$.
At the update by (\ref{fwd.s.out})
we have
\begin{math}
  U_N(t_{j-1})
  \le X_{1,N}(t_{j-1}) + 2(\omega_1(t_j)-\omega_1(t_{j-1})) .
\end{math}
Since
\begin{math}
  (\omega^{\sigma_N})'(t_i)-(\omega^{\sigma_N})'(t_j)
  = \omega'(t_i)-\omega'(t_j)
\end{math}
for $i = j,\ldots,k$,
$X_N(t_i)$ and $Y_N(t_i)$
are similarly updated.
In particular,
by Lemma~\ref{x.unif.fwd} we can find an upper bound for
$\max_{0\le i\le N}\|X_N(t_i)\|$
and $\max_{0\le i\le N}\|Y_N(t_i)\|$
regardless of $N$.

\begin{proposition}\label{ell.tight}
$\sigma_N(t)$ is equicontinuous on $[0,T]$.
\end{proposition}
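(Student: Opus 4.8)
The plan is to follow the proof of Proposition~\ref{y.tight}, now for the forward-time construction of Algorithm~\ref{s.ligg.alg}, using the quantitative estimates of Section~\ref{s.ligg.sec}; the guiding picture is that $\sigma_N$ is, at the scale of the partition, the Skorohod reflection term holding the raw-driven path $X_N$ below the obstacle $\partial Y^*_N$, and so should inherit the modulus of continuity of the path driving that reflection. First I would dispose of the trivial case $X_N(0)\notin y^*$, in which $\sigma_N=2\omega_1$ carries no dependence on $N$ and the claim is immediate; so assume $X_N(0)\in y^*$, so that the invariant $X_{1,N}(t_k)\le U_N(t_k)$ is maintained by step~(i) of Algorithm~\ref{s.ligg.alg}. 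Two elementary facts then set things up: $\sigma_N$ changes only over the intervals $(t_{j-1},t_j]$ on which~(\ref{fwd.s.out}) is invoked, with increment $2(\omega_1(t_j)-\omega_1(t_{j-1}))$, and on any such interval this increment is strictly positive — otherwise $|\omega_1(t_j)-\omega_1(t_{j-1})|=\omega_1(t_{j-1})-\omega_1(t_j)$ and step~(i) would force $X_{1,N}(t_{j-1})>U_N(t_{j-1})$, against the invariant. In particular each such increment is $\le2\Delta_\delta\omega_1$ once the mesh is below $\delta$, and at the update one obtains the two-sided bound $0\le U_N(t_{j-1})-X_{1,N}(t_{j-1})<2(\omega_1(t_j)-\omega_1(t_{j-1}))$.

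Next I would record the auxiliary fact that $\{\sigma_N(t)\}_{0\le t\le T}$ is uniformly bounded, the forward counterpart of the Gronwall step culminating in~(\ref{sigma.hat}); in the forward construction the restart mechanism of Algorithm~\ref{u.traj.alg} plays the role of the exact relation $\hat Y_{1,N}=\hat Z_{1,N}-\hat\sigma_N$ available in the backward one. By Lemma~\ref{x.unif.fwd} the raw-driven path $X_N$ is bounded uniformly in $N$; the predictor $Y_N$ restarts at $[U_N(t_{j-1}),X_{(-1),N}(t_{j-1})]$, whose first component lies within $2\|\omega\|_T$ of $X_{1,N}(t_{j-1})$ by the two-sided bound, and between updates $\sigma_N$ is constant so $X_N$ and $Y_N$ there receive identical increments; hence $Y_N$ is bounded uniformly in $N$, Lemma~\ref{u.approx} pins $U_N(t_k)$ to $Y_{1,N}(t_k)$ within a constant multiple of the time elapsed since the last update, and therefore the value of $\sigma_N$ at that last update — which equals $\sigma_N(t_k)$ — is bounded; closing the mutual dependence between these bounds is exactly where the discrete Gronwall inequality is used, as in Proposition~\ref{y.tight}.

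For equicontinuity I would fix $\delta>0$ and estimate $|\sigma_N(s+\delta)-\sigma_N(s)|$. If no interval inside $[s,s+\delta]$ is updated, then $\sigma_N$ varies over $[s,s+\delta]$ only through the at most two boundary intervals, hence by $\le2\Delta_\delta\omega_1$. Otherwise let $(t_{k-1},t_k]$ and $(t_{k'-1},t_{k'}]$ be the first and last updated intervals inside $[s,s+\delta]$; expressing $\sigma_N(t_k)$ and $\sigma_N(t_{k'})$ through the two-sided bound at an update and Lemma~\ref{u.approx}, the difference $\sigma_N(t_{k'})-\sigma_N(t_k)$ telescopes into the oscillations over $[s,s+\delta]$ of $\omega_1$, of $X_{1,N}$, of the predictor $Y_{1,N}$, and of $\partial Y^*_N$, the last bounded through Lemma~\ref{h.equicont} by $\sqrt{2}K_{\partial Y^*}\Delta_\delta X_N+K_{\partial Y^*}\Delta_\delta X_{(-1),N}$; by Lemma~\ref{x.unif.fwd} all of these tend to $0$ with $\delta$ uniformly in $N$. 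Adding the boundary contributions of order $\Delta_\delta\omega_1$ then finishes the estimate.

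I expect the main obstacle to be a genuine new feature of the forward construction, absent in Proposition~\ref{y.tight}: the obstacle $\partial Y^*_N$ is itself driven, through its first coordinate, by the modified path $\omega^{\sigma_N}$, so the oscillation of $\partial Y^*_N$ over $[s,s+\delta]$, and hence the equicontinuity bound, acquires a term in $\Delta_\delta\sigma_N$ itself. The way around it is that this term appears with the obstacle's ``push response'': additional reflection pushes $\partial Y^*_N$ further above $X_{1,N}$ — the self-limiting property of a Skorohod reflection term — so on a short window the contribution of $\Delta_\delta\sigma_N$ enters with a factor strictly larger than one (for constant drift it is exactly the factor $2$ visible in~(\ref{fwd.s.out})), and the bound closes with a contraction; for this one uses, as recorded in Section~\ref{s.ligg.sec}, that the remaining $n-1$ coordinates of $\omega^{\sigma_N}$ are unchanged so that $X_{(-1),N}$ and $Y_{(-1),N}$ see the same path and Lemma~\ref{u.approx} applies with only a Lipschitz error, and that between consecutive updates $\sigma_N$ is constant so that $X_N$ and $Y_N$ move in step. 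One may alternatively deduce the estimate from Proposition~\ref{y.tight} via the time-reversal correspondence of Remark~\ref{s.ligg.rem}, but this same feature must be handled there as well.
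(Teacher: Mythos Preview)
Your overall shape is close to the paper's, but the resolution you propose for the self-referential term is where the two diverge, and your version does not clearly close.

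The paper never invokes Lemma~\ref{h.equicont} here and never needs a contraction. Its key move is to pass at once from $\sigma_N=\omega_1-\omega_1^{\sigma_N}$ to controlling $|\omega_1^{\sigma_N}(t_{k_1-1})-\omega_1^{\sigma_N}(t_{k_\ell-1})|$, and then to look at the \emph{combination}
\[
U_N(t_{k_1-1})-U_N(t_{k_\ell-1})-\omega_1^{\sigma_N}(t_{k_\ell-1})+\omega_1^{\sigma_N}(t_{k_1-1}),
\]
together with $|X_{1,N}(t_{k_1-1})-X_{1,N}(t_{k_\ell-1})|$ (using that $U_N\approx X_{1,N}$ within $2\Delta_\delta\omega_1$ just before each update). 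The point is that the predictor $Y_{1,N}$ is driven by $-\omega_1^{\sigma_N}$, so along each stretch between restarts the increment of $Y_{1,N}+\omega_1^{\sigma_N}$ is purely the drift $\sum\beta_1(Y_N(t_j))(t_j-t_{j-1})$; the restarts contribute exactly the Lemma~\ref{u.approx} errors $\sum_{i=2}^\ell|Y_{1,N}(t_{k_i-1})-U_N(t_{k_i-1})|$. Both sums are $O(\delta)$ uniformly in $N$, and one arrives at the clean bound
\[
\Delta_\delta\sigma_N\le C\delta+\Delta_\delta X_{1,N}+11\,\Delta_\delta\omega_1.
\]
There is no $\Delta_\delta\sigma_N$ on the right-hand side at all: the feared circularity is dissolved by telescoping $U_N$ \emph{against} $\omega_1^{\sigma_N}$ rather than bounding the oscillation of $\partial Y^*_N$ on its own.

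By contrast, your route---bound the oscillation of $\partial Y^*_N$ separately via Lemma~\ref{h.equicont}, accept a $\Delta_\delta\sigma_N$ term on the right, and then contract---runs into trouble because the auxiliary path in Lemma~\ref{h.equicont} is now driven by $(\omega^{\sigma_N})'$, so its modulus picks up $\Delta_\delta\omega_1^{\sigma_N}\le\Delta_\delta\omega_1+\Delta_\delta\sigma_N$ with coefficient $1$, not $<1$. Your ``factor strictly larger than one'' heuristic is the right Skorohod intuition, but the naive inequality loop does not produce a strict contraction; the way to cash in that intuition is precisely the $U_N+\omega_1^{\sigma_N}$ telescoping above.

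Two minor points: the statement asks only for equicontinuity, and the paper does not prove (or use) uniform boundedness of $\sigma_N$ here---the needed uniform bounds on $\|X_N\|$ and $\|Y_N\|$ for Lemma~\ref{u.approx} are already recorded in the paragraph preceding the proposition. And in the no-update case the paper simply records $|\sigma_N(t')-\sigma_N(t)|\le 4\Delta_\delta\omega$ from the at most two boundary intervals, as you do.
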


\begin{proof}
If $X(0)\not\in y^*$ then $\sigma_N(t) = 2\omega_1(t)$
is equicontinuous;
thus, we assume $X(0)\in y^*$.
Let $\delta > 0$ and $0\le t< t'\le T$
be fixed such that $t'-t\le\delta$.
Clearly we have
$|\sigma_N(t')-\sigma_N(t)|\le 4\Delta_\delta\omega$
if there is no complete update by (\ref{fwd.s.out})
over the interval $(t,t']$; otherwise,
we find a series of updates by (\ref{fwd.s.out}),
say the first one on $(t_{k_1-1},t_{k_1}]$
to the last one on $(t_{k_\ell-1},t_{k_\ell}]$
between $t$ and $t'$.
Then $|\sigma_N(t)-\sigma_N(t')|$ is bounded by
\begin{align*}
  & |\omega_1^{\sigma_N}(t_{k_1-1}) - \omega_1^{\sigma_N}(t_{k_\ell-1})|
  + 7\Delta_\delta\omega_1
  \\ &
  \le \left|U_{1,N}(t_{k_1-1}) - U_{1,N}(t_{k_\ell-1})
  - \omega_1^{\sigma_N}(t_{k_\ell-1}) + \omega_1^{\sigma_N}(t_{k_1-1})\right|
  \\ & \hspace{10ex}
  + |X_{1,N}(t_{k_1-1}) - X_{1,N}(t_{k_\ell-1})| + 11\Delta_\delta\omega_1
\end{align*}
We can bound the first term of the upper bound above by
\begin{equation*}
  \sum_{i=2}^\ell\left|Y_{1,N}(t_{k_i-1}) - U_{1,N}(t_{k_i-1})\right|
  + \Big|\sum_{j=k_1}^{k_\ell-1}\beta_1(Y_N(t_j))(t_j-t_{j-1})\Big|
\end{equation*}
By Lemma~\ref{u.approx}
the above summation is further bounded by $C\delta$
with some constant value $C$ regardless of $N$.
Hence, we obtain
\begin{math}
  \Delta_\delta\sigma_N\le C\delta+\Delta_\delta X_{1,N}
  + 11\Delta_\delta\omega_1 .
\end{math}
\end{proof}

\subsection{Uniqueness and existence of forward Skorohod flow}

Let $T > 0$ and $y^*\in\mathbb{F}_1$ be fixed.
In order to show the uniqueness of forward Skorohod flow,
we consider two sample paths $\omega$ and $\xi$,
and generate two processes
$X(t)$ and $Y(t)$ of (\ref{sde.fwd})
respectively starting from 
$X(0), Y(0)\in y^*$.
In the next two lemmas we assume
the existence of their respective solutions
$(L, \omega^L)$ and $(M, \xi^M)$
to (\ref{skd.w.fwd})--(\ref{skd.l.fwd}),
and set the respective processes
\begin{math}
  Y^*(t) = \Phi_t^{-1}(y^*,(\omega^L)'(t-\cdot))
\end{math}
and
\begin{math}
  Z^*(t) = \Phi_t^{-1}(y^*,(\xi^M)'(t-\cdot))
\end{math}
so that
$X(t)\in Y^*(t)$
and $Y(t)\in Z^*(t)$ for $0\le t\le T$.

\begin{lemma}\label{bounded.gamma}
For $0\le t\le T$
a distance
\begin{equation*}
  d(t)
  = \sup\{\|\Phi_t^{-1}(z,(\omega^L)'(t-\cdot))
  - \Phi_t^{-1}(z,(\xi^M)'(t-\cdot))\|
  :z\in\partial y^*\}
\end{equation*}
between $\partial Y^*(t)$ and $\partial Z^*(t)$
is bounded by $\|\omega^L-\xi^M\|_T e^{K_\beta T}$.
\end{lemma}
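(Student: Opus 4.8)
The plan is to observe that, for a fixed $z\in\partial y^*$, both $\Phi_t^{-1}(z,(\omega^L)'(t-\cdot))$ and $\Phi_t^{-1}(z,(\xi^M)'(t-\cdot))$ are the time-$t$ endpoints of solutions to the forward integral equation~(\ref{h.sde}), driven respectively by the paths $(\omega^L)'$ and $(\xi^M)'$ and both started from the common value $z$. Indeed, unwinding the time reversal exactly as in Section~\ref{euler.app} (where it is recorded that $X(t)=\Phi_t^{-1}(X(0),\omega(t-\cdot))$ solves~(\ref{h.sde})): if $\phi_i$ denotes the solution to
\begin{equation*}
  \phi_i(v)=z+\int_0^v\beta(\phi_i(r))\,dr+\eta_i(v)-\eta_i(0),
  \qquad 0\le v\le t,
\end{equation*}
with $\eta_1=(\omega^L)'$ and $\eta_2=(\xi^M)'$, then $\Phi_t^{-1}(z,\eta_i(t-\cdot))=\phi_i(t)$. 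Thus the lemma reduces to a standard noise-perturbation bound for a Lipschitz flow, the Skorohod structure playing no role once the reflected paths are regarded as fixed.

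Next I would subtract the two integral equations, take norms, and use the Lipschitz continuity of $\beta$ (with constant $K_\beta$) together with the elementary fact that the prime operation of~(\ref{hat.w'}) merely changes the sign of the first coordinate, so that $\|(\omega^L)'(v)-(\xi^M)'(v)\|=\|\omega^L(v)-\xi^M(v)\|$ for every $v$. This gives, for $0\le v\le t$,
\begin{equation*}
  \|\phi_1(v)-\phi_2(v)\|\le K_\beta\int_0^v\|\phi_1(r)-\phi_2(r)\|\,dr+\|\omega^L-\xi^M\|_T ,
\end{equation*}
and Gronwall's inequality yields $\|\phi_1(t)-\phi_2(t)\|\le\|\omega^L-\xi^M\|_T e^{K_\beta t}\le\|\omega^L-\xi^M\|_T e^{K_\beta T}$. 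Since $\Phi_t(\cdot,\hat\omega)$ is a diffeomorphism we have $\Phi_t^{-1}(\partial y^*,(\omega^L)'(t-\cdot))=\partial Y^*(t)$ and $\Phi_t^{-1}(\partial y^*,(\xi^M)'(t-\cdot))=\partial Z^*(t)$, so $d(t)$ genuinely measures the spread between the two hypographical surfaces; as the bound above is uniform in $z\in\partial y^*$, taking the supremum over $z$ gives $d(t)\le\|\omega^L-\xi^M\|_T e^{K_\beta T}$.

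The only substantive step is the first one, the correct identification of $\Phi_t^{-1}(z,\eta(t-\cdot))$ with the forward solution driven by $\eta$ and started at $z$; this is exactly the time-reversal bookkeeping already performed in Section~\ref{euler.app}, so I do not anticipate a genuine obstacle, and the remainder is the routine Gronwall comparison of two solutions of the same integral equation with different inhomogeneous terms. The only point to keep track of is the convention for $\|\omega^L-\xi^M\|_T$ relative to the increment $\eta_i(v)-\eta_i(0)$, which causes no difficulty here because $(\omega^L)'(0)=(\xi^M)'(0)$.
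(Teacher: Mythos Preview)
Your proposal is correct and follows essentially the same route as the paper: fix $z\in\partial y^*$, recognize $\Phi_t^{-1}(z,\eta(t-\cdot))$ as the forward solution of~(\ref{h.sde}) driven by $\eta$, subtract the two integral equations, and apply Gronwall with the Lipschitz constant $K_\beta$. The paper's proof is slightly terser (it writes the inhomogeneous term directly as $\omega^L(t)-\xi^M(t)$ without commenting on the prime), whereas you make the norm-preservation of the prime and the vanishing at $0$ explicit, but the arguments are the same.
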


\begin{proof}
We choose $z\in \partial y^*|_{D}$ arbitrarily,
and set $U(t)=\Phi_t^{-1}(z,(\omega^L)'(t-\cdot))$
and $V(t)=\Phi_t^{-1}(z,(\xi^M)'(t-\cdot))$.
Since
\begin{align*}
  & \|U(t)-V(t)\|
  \\ & \hspace{0.2in}
  = \left\|
  \int_0^t[\beta(U(v))-\beta(V(v))]dv
  + \omega^L(t) - \xi^M(t)
  \right\|
  \\ & \hspace{0.2in}
  \le \|\omega^L - \xi^M\|_T
  + K_\beta\int_0^t\|U(v)-V(v)\|dv,
\end{align*}
we obtain the upper bound by Gronwall's inequality.
\end{proof}

In the following proposition
we assume a Lipschitz constant
$K_{\partial Y^*}\ge 1$
for $\partial Y^*(t,\cdot)$
and $\partial Z^*(t,\cdot)$.
Furthermore, we set
$\gamma(t) = \sup_{0\le v\le t} d(v)$
and
\begin{align*}
  \theta(t)
  & = \sup\{\|\Phi_v^{-1}(z,(\omega^L)'(v-\cdot))
  - \Phi_v^{-1}(z,(\xi^M)'(v-\cdot))-n(v)+m(v)\|
  \\
  & \hspace{12.5ex}
  :z\in\partial y^*,\,0\le v\le t\} ,
\end{align*}
where
$n(t) = [L(t);0]$ and $m(t) = [M(t);0]$
are the $n$-dimensional vectors
at the direction of the first coordinate
having the respective length $L(t)$ and $M(t)$.

\begin{lemma}\label{skd.ell.d}
We have
\begin{align*}
  \gamma(T)
  & \le
  (1+\sqrt{2} K_{\partial Y^*})(3+K_\beta T)\|X-Y\|_T
  e^{(1+\sqrt{2} K_{\partial Y^*})K_\beta T} ;
  \\
  \theta(T)
  & \le (3+K_\beta T)\|X-Y\|_T + K_\beta T\gamma(T) ;
  \\
  |L-M|_T
  & \le \sqrt{2} K_{\partial Y^*}\left(
  \theta(T) + \|X-Y\|_T
  \right) .
\end{align*}
\end{lemma}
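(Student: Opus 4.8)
The plan is to prove the three estimates in the order $\theta(T)$, then $|L-M|_T$, then $\gamma(T)$, the first two being the ingredients fed into a Gronwall argument for the last. Fix $z\in\partial y^*$ and write $U(v)=\Phi_v^{-1}(z,(\omega^L)'(v-\cdot))$ and $V(v)=\Phi_v^{-1}(z,(\xi^M)'(v-\cdot))$; as in the proof of Lemma~\ref{bounded.gamma} these trace the forward solutions of~(\ref{h.sde}) driven respectively by $(\omega^L)'$ and $(\xi^M)'$, so that
\[
  U(v)-V(v)=\int_0^v\bigl[\beta(U(w))-\beta(V(w))\bigr]\,dw+\bigl[(\omega^L)'(v)-(\xi^M)'(v)\bigr].
\]
Two elementary identities underlie everything. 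Since $n(v)=[L(v);0]$ and $m(v)=[M(v);0]$ lie in the first coordinate, $(\omega^L)'(v)-(\xi^M)'(v)=(n(v)-m(v))+[(\xi_1-\omega_1)(v),(\omega_{(-1)}-\xi_{(-1)})(v)]$, whose second summand has Euclidean norm exactly $\|\omega(v)-\xi(v)\|$; and reading~(\ref{sde.fwd}) for $X$ and for $Y$ gives $\omega(v)-\xi(v)=(X(v)-Y(v))-(X(0)-Y(0))-\int_0^v[\beta(X)-\beta(Y)]\,dw$, whence $\|\omega-\xi\|_T\le(2+K_\beta T)\|X-Y\|_T$.

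For $\theta(T)$: subtracting $n(v)-m(v)$ from the displayed identity removes the reflection increments, so
\[
  \bigl\|U(v)-V(v)-n(v)+m(v)\bigr\|\le\|\omega(v)-\xi(v)\|+K_\beta\int_0^v\|U(w)-V(w)\|\,dw\le\|\omega-\xi\|_v+K_\beta\int_0^v d(w)\,dw.
\]
Taking the supremum over $z\in\partial y^*$ and over $v\le T$, and using $\int_0^T d(w)\,dw\le T\gamma(T)$, gives $\theta(T)\le(2+K_\beta T)\|X-Y\|_T+K_\beta T\gamma(T)$, which is the second asserted bound (with $2+K_\beta T\le 3+K_\beta T$).

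For $|L-M|_T$ I would write the scalar Skorohod equations governing the two reflections: $t\mapsto\partial Y^*(t,X_{(-1)}(t))-X_1(t)\ge 0$ has the form $\kappa^X(t)+L(t)$ with $L$ the reflecting term of~(\ref{skd.l.fwd}) and $\kappa^X(t)$ the obstacle height at $X_{(-1)}(t)$ with its own reflection increment $L(t)$ removed, i.e.\ the first coordinate of $U^{z'}(t)-n(t)$ minus $X_1(t)$, where $U^{z'}$ is the boundary trajectory with $U^{z'}_{(-1)}(t)=X_{(-1)}(t)$; likewise for $Y$ with reflecting term $M$ and driver $\kappa^Y$. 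The comparison of Skorohod solutions (Lemma~2.1 of~\cite{saisho}, exactly as in the proof of Lemma~\ref{y.dag.cont}) then gives $|L-M|_t\le|\kappa^X-\kappa^Y|_t$, and it remains to bound the right-hand side: passing between the points $X_{(-1)}(t)$, $U^{z'}_{(-1)}(t)$, $V^{z'}_{(-1)}(t)$, $Y_{(-1)}(t)$ via the Lipschitz constant $K_{\partial Y^*}$ of the hypographical surfaces --- the passage from a surface point regarded as an $\mathbb{R}^n$-vector to its last $n-1$ coordinates costing a factor $\sqrt{K_{\partial Y^*}^2+1}\le\sqrt2\,K_{\partial Y^*}$ since $K_{\partial Y^*}\ge 1$ --- identifies the residual discrepancy with $\theta(t)$ and $\|X-Y\|_t$, yielding $|L-M|_T\le\sqrt2\,K_{\partial Y^*}(\theta(T)+\|X-Y\|_T)$. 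I expect this to be the main obstacle: one must keep track of which boundary trajectory realizes each surface height and thread several Lipschitz estimates through the Skorohod comparison, the new feature relative to Lemma~\ref{y.dag.cont} being that the two obstacle surfaces $\partial Y^*$ and $\partial Z^*$ come from different driving paths and so agree only up to $\theta$.

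Finally, for $\gamma(T)$: returning to the identity for $U(v)-V(v)$, bound $\|(\omega^L)'(v)-(\xi^M)'(v)\|\le\|\omega-\xi\|_v+|L-M|_v$ and take the supremum over $z$ to get $d(v)\le K_\beta\int_0^v d(w)\,dw+\|\omega-\xi\|_v+|L-M|_v$. Substituting the two estimates just proved (with $\theta(v)\le\|\omega-\xi\|_v+K_\beta\int_0^v d(w)\,dw$) and then $\|\omega-\xi\|_v\le(2+K_\beta T)\|X-Y\|_T$, and bounding the leftover $\sqrt2\,K_{\partial Y^*}\|X-Y\|_T$ by $(1+\sqrt2\,K_{\partial Y^*})\|X-Y\|_T$ so as to replace $2+K_\beta T$ by $3+K_\beta T$, collapses everything to
\[
  d(v)\le(1+\sqrt2\,K_{\partial Y^*})K_\beta\int_0^v d(w)\,dw+(1+\sqrt2\,K_{\partial Y^*})(3+K_\beta T)\|X-Y\|_T,
\]
and Gronwall's inequality gives $\gamma(T)=\sup_{v\le T}d(v)$ bounded as asserted.
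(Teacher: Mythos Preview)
Your proposal is correct and follows essentially the same route as the paper: derive the $\theta$ estimate from the integral identity for $U-V$ with the reflection increments subtracted off, feed a Saisho--Tanemura Skorohod comparison (Lemma~2.1 of~\cite{saisho}) through the choice of boundary trajectory with $U_{(-1)}(v)=X_{(-1)}(v)$ to get the $|L-M|$ bound with the $\sqrt{1+K_{\partial Y^*}^2}\le\sqrt{2}\,K_{\partial Y^*}$ factor, and then close with Gronwall on $\gamma$. The only cosmetic difference is that for the final step the paper uses the triangle inequality $\gamma(t)\le\theta(t)+|L-M|_t$ directly rather than re-expanding the integral identity for $U-V$, but both lead to the same Gronwall inequality.
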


\begin{proof}
In the setting of proof of Lemma~\ref{bounded.gamma}
we can observe that
\begin{align*}
  & \|U(t)-V(t)-n(t)+m(t)\|
  \\ & \hspace{0.2in}
  = \left\|
  \int_0^t[\beta(U(v))-\beta(V(v))]dv
  + \omega'(t) - \xi'(t)
  \right\|
  \\ & \hspace{0.2in}
  \le \|\omega - \xi\|_T
  + K_\beta\int_0^t\|U(v)-V(v)\|dv
  \le \|\omega - \xi\|_T
  + K_\beta\int_0^t\gamma(v)dv .
\end{align*}
Since
$\|\omega - \xi\|_T \le (2 + K_\beta T)\|X - Y\|_T$,
we obtain
\begin{equation*}
  \theta(t) \le (2 + K_\beta T)\|X - Y\|_T
  + K_\beta\int_0^t\gamma(v)dv .
\end{equation*}

Observe that
\begin{math}
  \kappa(t) = \partial Y^*(t,X_{(-1)}(t)) - L(t) - X_1(t)
\end{math}
and $\ell(t) = L(t)$ form a Skorohod equation.
By applying Lemma~2.1 of~\cite{saisho}
we can show that
\begin{align*}
  & |L-M|_t \le\sup_{0\le v\le t}
  |\partial Y^*(v,X_{(-1)}(v)) -\partial Z^*(v,Y_{(-1)}(v))
  \\ & \hspace{1.75in}
  -L(v)+M(v)-X_1(v)+Y_1(v)|
\end{align*}
We can choose $z\in \partial y^*$ satisfying
\begin{math}
  [\partial Y^*(v,X_{(-1)}(v)),X_{(-1)}(v)]
  = \Phi_v^{-1}(z,(\omega^L)'(v-\cdot)) ,
\end{math}
and set
$U(v) = \Phi_v^{-1}(z,(\omega^L)'(v-\cdot))$ and
$V(v) = \Phi_v^{-1}(z,(\xi^M)'(v-\cdot))$.
Observing that $U_{(-1)}(v) = X_{(-1)}(v)$,
we obtain
\begin{align*}
  & |\partial Y^*(v,X_{(-1)}(v)) -\partial Z^*(v,Y_{(-1)}(v))
  -L(v)+M(v)-X_1(v)+Y_1(v)|
  \\ & \hspace{0.2in}
  \le |U_1(v)-V_1(v)-L(v)+M(v)|
  \\ & \hspace{0.55in}
  + K_{\partial Y^*}\|V_{(-1)}(v)-Y_{(-1)}(v)\|
  + |X_1(v)-Y_1(v)|
  \\ & \hspace{0.2in}
  \le \sqrt{2} K_{\partial Y^*}\left(
  \|U(v)-V(v)-n(v)+m(v)\| + \|X(v)-Y(v)\|
  \right) ,
\end{align*}
which implies that
\begin{math}
  |L-M|_t
  \le \sqrt{2} K_{\partial Y^*}\left(
  \theta(t) + \|X-Y\|_T
  \right) .
\end{math}
Together we can show that
\begin{align*}
  \gamma(t)
  & \le (1+\sqrt{2} K_{\partial Y^*})\theta(t)
  + \sqrt{2} K_{\partial Y^*}\|X-Y\|_T
  \\ &
  \le (1+\sqrt{2} K_{\partial Y^*})(3 + K_\beta T)\|X-Y\|_T
  + (1+\sqrt{2} K_{\partial Y^*})K_\beta\int_0^t\gamma(v)dv,
\end{align*}
which implies the upper bound for $\gamma(t)$.
\end{proof}

By Lemma~\ref{skd.ell.d} we find the uniqueness of
forward Skorohod flow,
and along with Proposition~\ref{ell.tight} we are ready for
the existence of such a sample path.
The proof requires a version of Lemma~\ref{h.equicont}
for $\partial Y^*_N(t)$ of Algorithm~\ref{s.ligg.alg}
with sample path $\omega^{\sigma_N}$
in order to show
the uniform convergence of
a subsequence $\partial Y^*_{N_i}(t,X_{(-1),N_i}(t))$.
Otherwise, the proof of Proposition~\ref{s.conv}
goes exactly as in Proposition~\ref{y.dag.conv}.

\begin{proposition}\label{s.conv}
Assuming that $X_N(0)\in y^*$ is convergent,
$\sigma_N$ of Algorithm~\ref{s.ligg.alg} uniformly converges to $L$
of (\ref{skd.w.fwd})--(\ref{skd.l.fwd}).
\end{proposition}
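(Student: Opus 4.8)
The plan is to follow the proof of Proposition~\ref{y.dag.conv} almost verbatim; the one genuinely new ingredient is a version of Lemma~\ref{h.equicont} adapted to the fact that the surface $\partial Y^*_N(t,\cdot)$ produced by Algorithm~\ref{s.ligg.alg} is driven by the varying path $(\omega^{\sigma_N})'$ rather than by a fixed one. First I would dispose of the trivial case $X(0)\notin y^*$, where $\sigma_N(t)=2\omega_1(t)=L(t)$ for every $N$, and assume $X(0)\in y^*$ henceforth. Since $X_N$ is built from the fixed path $\omega$, Lemma~\ref{x.unif.fwd} shows the whole sequence $X_N$ converges uniformly to $X$ of~(\ref{sde.fwd}). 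By Proposition~\ref{ell.tight} the family $\{\sigma_N\}$ is equicontinuous on $[0,T]$, and because $\sigma_N(0)=0$ the modulus-of-continuity bound in that proof (which involves only the uniformly bounded quantities $\Delta_\delta X_{1,N}$ and $\Delta_\delta\omega_1$) also makes $\{\sigma_N\}$ uniformly bounded; so by Ascoli-Arzel\`{a} some subsequence $\sigma_{N_i}$ converges uniformly to a nondecreasing continuous $\sigma$ with $\sigma(0)=0$. It then suffices to show that $(\sigma,\omega^{\sigma})$ solves~(\ref{skd.w.fwd})--(\ref{skd.l.fwd}), since uniqueness (Lemma~\ref{skd.ell.d}) then forces $\sigma=L$ and upgrades the convergence to the whole sequence.

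The key step is the adapted lemma: if $\{F_N\}$ and $\{\sigma_N\}$ are uniformly bounded and equicontinuous on $[0,T]$, then $\{\partial Y^*_N(t,F_N(t))\}$ is uniformly bounded and equicontinuous, and if in addition $\sigma_{N_i}\to\sigma$ and $F_{N_i}\to F$ uniformly, then $\partial Y^*_{N_i}(t,F_{N_i}(t))\to\partial Y^*(t,F(t))$ uniformly, where $\partial Y^*$ is the hypographical surface of $Y^*(t)=\Phi_t^{-1}(y^*,(\omega^{\sigma})'(t-\cdot))$. The boundedness and equicontinuity half is the proof of Lemma~\ref{h.equicont} carried through with the driving path $(\omega^{\sigma_N})'$ in place of $\omega'$: that family is uniformly bounded and equicontinuous because $\{\sigma_N\}$ and $\omega$ are, the auxiliary trajectories $X_N(v)$ it drives inherit uniform bounds from Lemma~\ref{x.unif.fwd}, and $K_{\partial Y^*}$ is uniform by hypothesis, so every constant in that argument stays independent of $N$. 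For the convergence half, at each fixed $t$ continuity of $\Phi_t^{-1}$ in its driving path (as in Lemma~\ref{ss.consist} and Proposition~\ref{ss.cont}) gives $Y^*_{N_i}(t)\to Y^*(t)$ in the Painlev\'{e}-Kuratowski sense, which together with the uniform Lipschitz control yields locally uniform convergence of the surface functions and hence $\partial Y^*_{N_i}(t,F_{N_i}(t))\to\partial Y^*(t,F(t))$ pointwise in $t$; pointwise convergence plus equicontinuity on the compact $[0,T]$ then gives uniform convergence. I expect this lemma to be the main obstacle, as it is the only place where the moving sample path forces a real modification rather than a formal substitution.

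Applying the adapted lemma with $F_N=X_{(-1),N}$, I obtain $U_{N_i}(t)=\partial Y^*_{N_i}(t,X_{(-1),N_i}(t))\to U(t):=\partial Y^*(t,X_{(-1)}(t))$ uniformly, and from here the argument of Proposition~\ref{y.dag.conv} runs unchanged. The forward counterparts of~(\ref{y.in}) and~(\ref{y.at.k}) --- namely $X_{1,N}(t_k)\le U_N(t_k)$ throughout, and $0\le U_N(t_j)-X_{1,N}(t_j)$ of order $\delta+\Delta_\delta\omega$ whenever $\sigma_N(t_j)>\sigma_N(t_{j-1})$, which is exactly what the update rule~(\ref{fwd.s.out}) together with Lemma~\ref{u.approx} provides --- show that, for any $\varepsilon_0>0$ and all large $N_i$, every increment of $\sigma_{N_i}$ is taken only at grid points with $0\le U(t_k)-X_1(t_k)<\varepsilon_0$, so that
\[
  \sigma_{N_i}(t_l)=\sum_{k=1}^{l} I_{\{0\le U(t_k)-X_1(t_k)<\varepsilon_0\}}\bigl(\sigma_{N_i}(t_k)-\sigma_{N_i}(t_{k-1})\bigr).
\]
Letting $N_i\to\infty$ and then $\varepsilon_0\downarrow0$ yields
\[
  \sigma(t)=\int_0^t I_{\{X_1(v)=U(v)\}}\,d\sigma(v),
\]
which is~(\ref{skd.l.fwd}) with $L=\sigma$; and passing to the limit in $X_{1,N_i}(t)\le U_{N_i}(t)$ gives $X_1(t)\le U(t)$, i.e.\ $X(t)\in Y^*(t)$ for all $t$, so $(\sigma,\omega^{\sigma})$ indeed solves~(\ref{skd.w.fwd})--(\ref{skd.l.fwd}). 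By uniqueness (Lemma~\ref{skd.ell.d}) the whole sequence $\sigma_N$ converges uniformly to $L$, completing the proof.
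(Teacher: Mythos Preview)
Your proposal is correct and follows essentially the same route as the paper: the paper's proof says only that one needs a version of Lemma~\ref{h.equicont} for $\partial Y^*_N(t)$ driven by the varying path $\omega^{\sigma_N}$ in order to get uniform convergence of a subsequence $\partial Y^*_{N_i}(t,X_{(-1),N_i}(t))$, and that otherwise the argument of Proposition~\ref{y.dag.conv} applies verbatim. You have identified exactly this adapted lemma as the one new ingredient, supplied the details the paper omits (including the uniform boundedness of $\sigma_N$ from equicontinuity and $\sigma_N(0)=0$, and the explicit translation of the Skorohod verification), and then tracked the proof of Proposition~\ref{y.dag.conv} step by step.
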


The approximation $X_N(t)$ for Algorithm~\ref{s.ligg.alg}
can be constructed by Remark~\ref{s.ligg.rem}.
(i)
If $X(0)\not\in y^*$
then we can find $X_N(0)\not\in y^*$
for sufficiently large $N$, and therefore, obtain
$\sigma_N(t) = 2\omega_1(t)$, $0\le t\le T$.
(ii)
If $X(0)\in y^*\setminus\partial y^*$
then
$X_N(0)\in y^*\setminus\partial y^*$ holds
for sufficiently large $N$, and therefore,
Proposition~\ref{s.conv} is applicable for $\sigma_N(t)$.
Hence, in either (i) or (ii)
$\omega^{\sigma_N}(t)$, $0\le t\le T$, converges uniformly
to $\Theta_{x,y^*,T}(\omega)$.
Since
\begin{math}
  \mathbb{W}\left(\left\{
  \omega\in C([0,T],\mathbb{R}^n):
  \Phi_T(x,\omega(T-\cdot))\in\partial y^*
  \right\}\right)=0,
\end{math}
the restriction of the sample space
on (i)--(ii) does not change the result of Lemma~\ref{weak.fwd.w}.
Thus, we obtain the following corollary.

\begin{corollary}\label{c.skd.dist}
If a sample path $\omega$ is distributed as
$\mathbb{W}$ on $C(\mathbb{R}_+,\mathbb{R}^n)$
then so is $\Theta_{x,y^*,T}(\omega)$.
\end{corollary}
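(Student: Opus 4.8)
The plan is to obtain Corollary~\ref{c.skd.dist} by the same template already used for its backward counterpart Corollary~\ref{skd.flow.dist}, assembling Lemma~\ref{weak.fwd.w} with the uniform approximation prepared in the paragraph preceding the statement. Fix $T>0$ and first work on $C([0,T],\mathbb{R}^n)$. I would partition the Wiener sample space according to the location of $X(0)=\Phi_T(x,\omega(T-\cdot))$ relative to $y^*$: the set $\{X(0)\in\partial y^*\}$ has Wiener measure zero (continuity of $\Phi_T$ together with the absolute continuity of the diffusion transition kernels), so it may be discarded. On $\{X(0)\notin y^*\}$ one has $\sigma_N\equiv 2\omega_1$ for all large $N$ by Remark~\ref{s.ligg.rem}, hence $\omega^{\sigma_N}=\omega'$, which is exactly $\Theta_{x,y^*,T}(\omega)$ restricted to $[0,T]$; on $\{X(0)\in y^*\setminus\partial y^*\}$, Proposition~\ref{s.conv} gives that $\sigma_N$ converges uniformly to the forward Skorohod flow $L$, so that $\omega^{\sigma_N}$ converges uniformly to $\omega^L=\Theta_{x,y^*,T}(\omega)|_{[0,T]}$.

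Next I would invoke Lemma~\ref{weak.fwd.w}: whenever $\omega\sim\mathbb{W}$ on $C([0,T],\mathbb{R}^n)$, each $\omega^{\sigma_N}$ is again distributed as $\mathbb{W}$ on $C([0,T],\mathbb{R}^n)$. Since the discarded event is null, the almost-sure uniform convergence of the previous step yields convergence in distribution, and a constant sequence of laws all equal to $\mathbb{W}$ can only converge to $\mathbb{W}$; hence $\Theta_{x,y^*,T}(\omega)$, restricted to $[0,T]$, is distributed as Brownian motion on $[0,T]$.

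It then remains to pass from $[0,T]$ to $\mathbb{R}_+$. Here I would observe that $L(T)$ is a measurable functional of $\omega|_{[0,T]}$ — it is built from $\hat{\omega}(s)=\omega(T-s)-\omega(T)$ and from $\omega^{\sigma_N}$ on the earlier subintervals, so it is $\mathcal{F}^\omega_T$-measurable — while the definition of $\Theta_{x,y^*,T}(\omega)$ gives, for $t\ge T$, the identity $\Theta_{x,y^*,T}(\omega)(t)-\Theta_{x,y^*,T}(\omega)(T)=\omega(t)-\omega(T)$, which matches continuously at $t=T$ and, by the independent-increments property of $\mathbb{W}$, is independent of $\mathcal{F}^\omega_T$, hence of $\Theta_{x,y^*,T}(\omega)|_{[0,T]}$. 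Splicing the two pieces and using the Markov property of $\mathbb{W}$ identifies the law of $\Theta_{x,y^*,T}(\omega)$ on $C(\mathbb{R}_+,\mathbb{R}^n)$ with $\mathbb{W}$.

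The genuine content sits in the results already established, namely Proposition~\ref{s.conv} (uniform convergence of $\sigma_N$ to $L$) and Lemma~\ref{weak.fwd.w} (distributional invariance of the approximate flow), so the corollary itself is a short assembly rather than a hard step; the only parts demanding a little care beyond quoting those two results are the verification that the uniform limit in the two nontrivial cases is precisely the map $\Theta_{x,y^*,T}(\omega)$ as defined, and the independence argument for the tail $t>T$ — which is nonetheless immediate from the Brownian Markov property.
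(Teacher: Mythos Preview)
Your proposal is correct and mirrors the paper's argument almost exactly: the paper proves the corollary in the paragraph immediately preceding it by the same case split on $X(0)\notin y^*$ versus $X(0)\in y^*\setminus\partial y^*$, invokes Proposition~\ref{s.conv} for uniform convergence of $\sigma_N$ in the second case, notes that the boundary event is $\mathbb{W}$-null, and then appeals to Lemma~\ref{weak.fwd.w}. The only addition in your write-up is the explicit passage from $C([0,T],\mathbb{R}^n)$ to $C(\mathbb{R}_+,\mathbb{R}^n)$ via the independent-increments property; the paper leaves this step tacit (as it did for Corollary~\ref{skd.flow.dist}), so your treatment is in fact slightly more complete.
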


\subsection{$\Lambda$-linked dynamical systems}

Let $T>0$ be fixed.
Provided $X\in C([0,T],\mathbb{R}^n)$,
we can impute $\omega\in C([0,T],\mathbb{R}^n)$ by setting
\begin{equation}\label{impute.w}
  \omega(v) = X(v) - X(0) - \int_0^v\beta(X(u))\,du,
  \quad 0\le v\le T,
\end{equation}
so that $X(v) = \Phi^{-1}_v(X(0),\omega(v-\cdot))$
for $0\le v\le t$.
Let $y^*\in\mathbb{F}_1$.
For any fixed $T > 0$ we can define a map
\begin{equation*}
  \tilde{\Theta}_{y^*,T}(X) =
  \Theta_{X(T),y^*,T}(\omega)
\end{equation*}
from
\begin{math}
  (y^*,X)\in
  \mathbb{F}_1\times C([0,T],\mathbb{R}^n)
\end{math}
to $C([0,T],\mathbb{R}^n)$.
The following proposition
is almost a restatement of Corollary~\ref{c.skd.dist},
and it provides a complete claim for
what we started in Proposition~\ref{pitman.cons}.

\begin{proposition}\label{y.hat.dist}
Construct $\hat{X}(s) = \Phi_s(x,\omega(T-\cdot))$,
$0\le s\le T$, using a sample path $\omega$
distributed as $\mathbb{W}$,
and impute $\omega$ by (\ref{impute.w})
for $X(t) = \hat{X}(T-t)$, $0\le t\le T$.
Then
\begin{math}
  \tilde{\Theta}_{y^*,T}(\hat{X}(T-\cdot)) =
  \Theta_{x,y^*,T}(\omega)
\end{math}
is distributed as $\mathbb{W}$ on $C([0,T],\mathbb{R}^n)$.
\end{proposition}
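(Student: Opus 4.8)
The plan is to reduce the statement to Corollary~\ref{c.skd.dist} by checking that the sample path imputed from the time-reversed process coincides (almost surely) with the path $\omega$ we started from. First I would unravel the time reversal. Writing the defining integral equation of the strong solution $\Phi_s$ with noise $\omega(T-\cdot)$ at time $s = T-v$, and using $X(v) = \hat{X}(T-v)$, gives
\begin{equation*}
  X(v) = x - \int_0^{T-v}\beta(\hat{X}(u))\,du + \omega(v) - \omega(T) .
\end{equation*}
Subtracting the value at $v = 0$ and substituting $u = T-w$ in the resulting integral, I obtain
\begin{equation*}
  X(v) - X(0) = \int_0^v\beta(X(w))\,dw + \omega(v) - \omega(0) ,
\end{equation*}
so that $X$ solves the forward integral equation (\ref{h.sde}) driven by the \emph{original} $\omega$; consequently the path imputed from $X$ through (\ref{impute.w}) is $\omega(\cdot) - \omega(0)$, which equals $\omega$ for $\mathbb{W}$-almost every $\omega$ since then $\omega(0) = 0$.

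Next I would match the two maps. Since $X(T) = \hat{X}(0) = x$ and the imputed path is $\omega$, the definition of $\tilde{\Theta}$ yields $\tilde{\Theta}_{y^*,T}(\hat{X}(T-\cdot)) = \Theta_{X(T),y^*,T}(\omega) = \Theta_{x,y^*,T}(\omega)$, which on $[0,T]$ is the forward Skorohod flow $\omega^L$ of (\ref{skd.w.fwd})--(\ref{skd.l.fwd}). The initial condition $X(0) = \Phi_T(x,\omega(T-\cdot))$, hence the event $X(0)\in y^*$, and the increasing process $L$ of the Skorohod equation all depend on $\omega$ only through its increments, so this identification causes no ambiguity and is consistent with the $X(0)$ appearing in the definition of $\Theta_{x,y^*,T}$.

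Finally I would invoke Corollary~\ref{c.skd.dist}: as $\omega$ is distributed as $\mathbb{W}$, so is $\Theta_{x,y^*,T}(\omega)$, and restricting to the interval $[0,T]$ shows that $\tilde{\Theta}_{y^*,T}(\hat{X}(T-\cdot))$ is distributed as $\mathbb{W}$ on $C([0,T],\mathbb{R}^n)$, which is exactly the assertion. The only step that needs care — and essentially the only obstacle — is the bookkeeping of the time reversal together with the constant shift $\omega(0)$ in the imputation identity; once that is settled the proposition is, as advertised, a direct consequence of Corollary~\ref{c.skd.dist}.
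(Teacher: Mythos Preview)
Your proof is correct and follows exactly the approach the paper intends: the paper does not give a separate proof but simply remarks that the proposition ``is almost a restatement of Corollary~\ref{c.skd.dist},'' and your argument supplies precisely the bookkeeping (the time-reversal identity showing that the imputed path from $X=\hat{X}(T-\cdot)$ recovers $\omega$, and the identification $X(T)=x$) that justifies this restatement. The care you take with the constant $\omega(0)$ is appropriate and the reduction to Corollary~\ref{c.skd.dist} is exactly what the paper has in mind.
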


In Proposition~\ref{ligg.cons}
we have constructed the Markov dynamical system
$\Xi^*_t$ of Liggett dual to (\ref{c.sde}).
By replacing $\omega$ with $\tilde{\Theta}_{y^*,t}(X)$,
we obtain a dynamical system
\begin{equation}\label{psi.mds}
  \Psi^*_t((z^*,y^*),X) = \Xi^*_t((z^*,y^*),\tilde{\Theta}_{y^*,t}(X))
\end{equation}
from $\bar{D}^*\times C(\mathbb{R}_+,\mathbb{R}^n)$ to $\bar{D}^*$.

\begin{proposition}\label{s.ligg.cons}
Construct the intertwining dual $Q^*_t$ of (\ref{c.sde})
from $\Xi^*_t$ by Proposition~\ref{twin.prop}.
Then the dynamical system of (\ref{psi.mds})
is $\Lambda$-linked.
\end{proposition}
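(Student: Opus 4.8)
The plan is to apply Proposition~\ref{s.twin.prop}, taking for the Liggett dual $Q_t$ the semigroup generated by the Markov dynamical system $\Xi^*_t$ of Proposition~\ref{ligg.cons} (sub-Markov on $D^*$, extended to a Markov semigroup on $\bar{D}^*$ by absorption at $\partial$); Proposition~\ref{twin.prop} then manufactures from it the $Q^*_t$ appearing in the statement. Thus everything reduces to checking the three hypotheses (a), (b), (c) of Proposition~\ref{s.twin.prop} for $\Psi^*_t((z^*,y^*),X) = \Xi^*_t((z^*,y^*),\tilde{\Theta}_{y^*,t}(X))$.

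For (a) I would start from the pathwise identity $\Gamma((z^*,y^*),\psi_{y^*,t}(x,\omega(t-\cdot))) = \Gamma(\Xi^*_t((z^*,y^*),\omega),x)$ proved inside Proposition~\ref{ligg.cons}, and read it with $\omega$ replaced by the forward Skorohod flow $\omega^{L} = \tilde{\Theta}_{y^*,t}(X)$ and $x$ replaced by $X(t)$. All that then remains is the pathwise relation $\psi_{y^*,t}(X(t),\omega^{L}(t-\cdot)) = X(0)$, which is exactly the mutual-inverse property of the forward and backward Skorohod flows recorded at the end of Section~\ref{impute.sec}: the time reversal of $X$ solves~(\ref{skd})--(\ref{skd.l}) driven by the time reversal of $\omega^{L}$, whose reflection term $\hat{\omega}^{\hat{L}}$ satisfies $\Phi_t(X(t),\hat{\omega}^{\hat{L}}) = X(0)$. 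The degenerate case $X(0)\notin y^*$, where $\tilde{\Theta}_{y^*,t}(X)$ collapses to the doubled path $\omega'$, falls under the same identity: then $X(t)$ lies outside the second component $\Phi_t^{-1}(y^*,(\omega^{L})'(t-\cdot))$ of $\Psi^*_t((z^*,y^*),X)$, and both sides of (a) vanish.

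For (b) I would invoke Proposition~\ref{y.hat.dist}: for $X\sim\mathbb{P}_x$ the path $\tilde{\Theta}_{y^*,t}(X(t-\cdot))$ is distributed as $\mathbb{W}$ on $C([0,t],\mathbb{R}^n)$, with a law independent of $x$ (the null-set caveats being already handled through Corollary~\ref{c.skd.dist}). Hence $\mathbf{E}_{\mathbb{P}_x}[f(\Psi^*_t(x^*,X(t-\cdot)))] = \mathbf{E}_{\mathbb{W}}[f(\Xi^*_t(x^*,\omega))]$, which does not depend on $x$ and is precisely the Liggett dual $Q_t$ of Proposition~\ref{ligg.cons}, so hypothesis (b) holds with the same $Q_t$ feeding Proposition~\ref{twin.prop}. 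For (c), I would argue that $\Psi^*_t$ is a Markov dynamical system in the sense of Definition~\ref{mds}: the forward Skorohod flow satisfies the cocycle identity that $\tilde{\Theta}_{y^*,t}(X)$ restricted to $[s,t]$ equals $\tilde{\Theta}_{Y^*(s),t-s}(X(\cdot+s))$, since the reflection accrued on $[s,t]$ depends only on the surface process issued from $Y^*(s)$ and on $X|_{[s,t]}$, while $\Xi^*_t$ is itself a Markov dynamical system; composing gives $\Psi^*_t(x^*,X) = \Psi^*_{t-s}(\Psi^*_s(x^*,X),X(\cdot+s))$. Then, when $X$ is Markovian, the pair $(\Psi^*_t(x^*,X),X(t))$ is Markovian: conditionally on $\mathcal{F}^X_s$ its future is a fixed functional of $(\Psi^*_s(x^*,X),X(\cdot+s))$, which by the Markov property of $X$ depends on the past only through $(\Psi^*_s(x^*,X),X(s))$, and a conditioning-down argument promotes this to the Markov property for the natural filtration of the pair. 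With (a), (b), (c) in hand, Proposition~\ref{s.twin.prop} gives that $\Psi^*_t$ is $\Lambda$-linked.

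The step I expect to be the main obstacle is hypothesis (c): establishing rigorously that $\Psi^*_t$ is a Markov dynamical system, the delicate ingredient being the cocycle property of the forward Skorohod flow, which has to be read off carefully from the pathwise construction (and the corresponding approximation Algorithm~\ref{s.ligg.alg}) rather than quoted. By contrast (a) and (b) are essentially bookkeeping on top of Propositions~\ref{ligg.cons} and~\ref{y.hat.dist}, with the genuine analysis --- existence, uniqueness, and Wiener-measure invariance of the Skorohod flows --- already carried out in Sections~\ref{rel.skd}--\ref{impute.sec}.
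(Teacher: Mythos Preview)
Your proposal is correct and follows essentially the same route as the paper: verify hypotheses (a), (b), (c) of Proposition~\ref{s.twin.prop} and invoke it. Your treatment of (a) via the two cases $X(0)\in y^*$ and $X(0)\notin y^*$, and of (b) via Proposition~\ref{y.hat.dist}, matches the paper's proof almost verbatim. The one notable difference is (c): the paper's proof dispatches it with the single phrase ``Having verified (c)'' and gives no argument, whereas you correctly flag it as the substantive step and sketch the cocycle argument for the forward Skorohod flow; so on this point your proposal is more complete than the paper's own proof.
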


\begin{proof}
(a)
Let $(z^*,y^*)\in \bar{D}^*$ be fixed.
Consider a forward process  
$X(t)$, $0\le t\le T$, by~(\ref{sde.fwd}).
(i)
If $X(0)\not\in y^*$ then $\omega' = \tilde{\Theta}_{y^*,T}(X)$,
and therefore, $X(T)\not\in\Phi^{-1}_T(y^*,\omega(T-\cdot))$.
(ii)
If $X(0)\in y^*$ then $\omega^L = \tilde{\Theta}_{y^*,T}(X)$.
Provided the backward sample path
$\hat{\xi}(s) = \omega^L(T-s) - \omega^L(T)$,
$\hat{X}(s) = X(T-s)$, $0\le s\le T$, is a solution
to~(\ref{skd})--(\ref{skd.l}).
Thus, we obtain
$X(0) = \psi_{y^*,T}(X(T),\tilde{\Theta}_{y^*,T}(X)(T-\cdot))$.
Both (i) and (ii) together implies (\ref{s.ligg.g}).
(b)
By Proposition~\ref{y.hat.dist} we can show that
\begin{equation*}
  \mathbf{E}_{\mathbb{P}_x}\big[
    f(\Xi^*_T((z^*,y^*),\tilde{\Theta}_{y^*,T}(\hat{X}(T-\cdot))))
    \big]
\end{equation*}
generates the Liggett dual of Proposition~\ref{ligg.cons},
and that it is independent on the choice of initial state $x\in D$
for $\hat{X}$.
Having verified (c),
we can apply Proposition~\ref{s.twin.prop}.
\end{proof}

\section{Examples of intertwining dual}
\label{c.link}

In order to apply Proposition~\ref{s.twin.prop}
for the $\Lambda$-linked dynamical system $\Psi^*_t$ of
Proposition~\ref{s.ligg.cons},
the corresponding harmonic function of~(\ref{hgv})
must be finite and strictly positive on $D^*$.
Then we can form a $\Lambda$-linked semigroup by
\begin{equation*}
  V_tg((z^*,y^*),x)
  = \mathbf{E}_{\mathbb{W}}\big[
    g(\Xi^*_t((z^*,y^*),\tilde{\Theta}_{y^*,t}(\Phi_\cdot(x,\omega))),
    \Phi_t(x,\omega))
  \big]
\end{equation*}
over
\begin{equation*}
  E = \{((z^*,y^*),x)\in D^*\times D: x\in y^*\setminus z^*\}.
\end{equation*}
We can set $X^*(0) = x^*\in D^*$
and sample $X(0)$ randomly from
$\lambda(x^*,\cdot)$ for an initial state.
Then we can produce the $\Lambda$-linked coupling
$\mathbb{V}_{\lambda(x^*,\cdot)}$
by generating $X(t) = \Phi_t(X(0),\omega)$,
imputing $\tilde{\Theta}_{y^*,t}(X)$ from $X$,
and constructing
\begin{math}
  X^*(t) = \Xi^*_t(x^*,\tilde{\Theta}_{y^*,t}(X)).
\end{math}
The resulting bivariate process $(X^*(t),X(t))$
takes values on $E$,
and determines the intertwining dual $Q^*_t$ by
\begin{equation*}
  Q^*_t g(x^*) =
  \mathbf{E}_{\mathbb{V}_{\lambda(x^*,\cdot)}}[g(X^*(t))] .
\end{equation*}

\subsection{Entrance states for intertwining dual}
\label{e.link}

We continue Example~\ref{r1.ex.ligg},
and examine a construction of $\Lambda$-linked coupling.
In the setting of Proposition~\ref{twin.prop}
we find the finite and strictly positive harmonic function
\begin{math}
  h(z,y) = \displaystyle\int_z^y\nu_1(x)\,dx
\end{math}
for the Liggett dual of (\ref{r1.b}),
and obtain the differential operator
\begin{align*}
  \mathcal{B}^*f
  & = \frac{1}{h}\mathcal{B}[hf]
  \\
  & = \mathcal{B}f
  + \frac{1}{h}\left(
  \frac{\partial}{\partial y} - \frac{\partial}{\partial z}
  \right)\!h\times
  \left(
  \frac{\partial}{\partial y} - \frac{\partial}{\partial z}
  \right)\!f
\end{align*}
which corresponds to the intertwining dual $Q^*_t$
on the dual state space of (\ref{bm.d}).

In the next proposition
we use a Markov dynamical system
\begin{equation*}
  \Xi^*_t((z,y),\omega) = \big(
  \Phi^{-1}_t(z,\omega(t-\cdot)),
  \Phi^{-1}_t(y,-\omega(t-\cdot))
  \big) ,
\end{equation*}
and generate
$(X^*(t),X(t))$ starting from $(X^*(0),X(0)) = ((x,x),x)$.
It produces a probability measure
on $\bar{E} = \{((z,y),x):z\le x\le y\}$.

\begin{proposition}\label{ent.prop}
Let $(z_N)$ be an infinite sequence in $\mathbb{R}$ converging
to $x$ from below,
and let
$\mathbb{V}_{\lambda(x^*_N,\cdot)}$
be the $\Lambda$-linked coupling for each $x^*_N = (z_N,x)$.
Then (a)
$\mathbb{V}_{\lambda(x^*_N,\cdot)}$ converges weakly to
the distribution of $(X^*(t),X(t))$, $0\le t< \infty$,
and (b)
$X^*(t)$ is a diffusion process
associated with $\mathcal{B}^*$.
\end{proposition}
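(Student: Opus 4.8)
The plan is to realize every coupling $\mathbb{V}_{\lambda(x^*_N,\cdot)}$, together with the candidate limit, on one probability space carrying a single Wiener noise $\omega$, and then to deduce the weak convergence of laws from pathwise convergence. By Proposition~\ref{s.ligg.cons} and the realization described at the beginning of Section~\ref{c.link}, the coupling $\mathbb{V}_{\lambda(x^*_N,\cdot)}$ is the law of $(X^*_N(t),X_N(t))$ obtained by drawing $X_N(0)$ from $\lambda(x^*_N,\cdot)$ independently of $\omega$, setting $X_N(t)=\Phi_t(X_N(0),\omega)$, imputing the forward Skorohod flow $\tilde\Theta_{y^*,t}(X_N)$ with $y^*=(-\infty,x]$, and putting $X^*_N(t)=\Xi^*_t(x^*_N,\tilde\Theta_{y^*,t}(X_N))$. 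Since $\nu_1$ is continuous and strictly positive, the density $\nu_1(\cdot)/h(z_N,x)$ of $\lambda(x^*_N,\cdot)$ on $(z_N,x]$ converges weakly to $\delta_x$ as $z_N\uparrow x$; coupling the $X_N(0)$ through one uniform random variable makes $X_N(0)\to x$ surely, and then $X_N(t)=\Phi_t(X_N(0),\omega)\to\Phi_t(x,\omega)=:X(t)$ uniformly on compact time sets by the Lipschitz continuity of $\beta_1$.

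The second step is the pathwise stability of $t\mapsto X^*_N(t)$. The map $X\mapsto\tilde\Theta_{y^*,t}(X)$ is uniformly Lipschitz continuous on paths starting inside $y^*$ --- this is exactly the content of the continuous-dependence estimates of Lemma~\ref{skd.ell.d}, with the approximation convergence of Proposition~\ref{s.conv} (and the equicontinuity of Proposition~\ref{ell.tight}) supplying existence --- so $\{\tilde\Theta_{y^*,t}(X_N)\}$ is Cauchy and converges uniformly on each $[0,T]$ to a limit that serves as $\tilde\Theta_{y^*,t}(X)$ even though $X$ starts exactly on $\partial y^*$. Since $\Xi^*_t$ depends continuously on both of its arguments (continuous dependence of $\Phi^{-1}$ on initial point and driving path, cf.\ Proposition~\ref{ss.cont} and Lemma~\ref{psi.cont}), we obtain $X^*_N(t)=\Xi^*_t(x^*_N,\tilde\Theta_{y^*,t}(X_N))\to\Xi^*_t((x,x),\tilde\Theta_{y^*,t}(X))=:X^*(t)$ uniformly on compact time intervals. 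Hence $(X^*_N,X_N)\to(X^*,X)$ almost surely in $C(\mathbb{R}_+,\bar{E})$, which gives the weak convergence asserted in~(a); in particular all finite-dimensional marginals converge.

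For~(b), by Theorem~\ref{link.thm} each $X^*_N$ under $\mathbb{V}_{\lambda(x^*_N,\cdot)}$ is Markov with the semigroup $Q^*_t$ of Proposition~\ref{twin.prop}, started from $x^*_N$; since $Q^*_t$ is the Doob $h$-transform of the Feller Liggett-dual semigroup of Example~\ref{r1.ex.ligg} by the continuous strictly positive $h(z,y)=\int_z^y\nu_1(w)\,dw$, it is conservative and Feller on $D^*$, so the weak limit $X^*$ retains the Markov property with transition $Q^*_{t-s}$ for $0<s<t$, the only new feature being that it issues from the boundary point $(x,x)$ as an entrance law. Continuity of the paths is inherited from the continuous dynamical-system construction. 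That $X^*$ is a diffusion associated with $\mathcal{B}^*$ then follows by passing to the limit in the Dynkin formula recalled in Section~\ref{diffusion} for $Q^*_t$, whose generator is the operator $\mathcal{B}^*$ displayed in Section~\ref{e.link}; equivalently, the coordinate $V(t)=[Y(t)-Z(t)]/2$ is the diffusion whose drift carries the repelling $\coth$-type singularity $(\nu_1(y)+\nu_1(z))/h(z,y)$ inherited from $h$, so that $V(0)=0$ while $V(t)>0$ for $t>0$, confirming $X^*(t)\in D^*$ for all $t>0$.

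The main obstacle is the passage near $t=0$: one must control the simultaneous degeneracy of the link $\lambda(x^*_N,\cdot)\to\delta_x$ and of the initial state $x^*_N\to(x,x)\in\partial$, and show that the limiting pair $(Z(t),Y(t))$ does not collapse onto the diagonal for positive times. Concretely this requires that the forward Skorohod flow started from a path lying exactly on the surface $\partial Y^*(0)$ still produces a well-defined $\Xi^*$-image that enters $D^*$ instantaneously --- the entrance behaviour of the Bessel-type coordinate $V$ --- and that the local-time approximations $\sigma_N$ of Algorithm~\ref{s.ligg.alg} remain equicontinuous uniformly as $x^*_N\to(x,x)$; these are precisely the places where the estimates of Section~\ref{impute.sec}, applied along the sequence, must be verified not to degenerate.
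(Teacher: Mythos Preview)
Your approach is essentially the same as the paper's: realize all couplings on one probability space with a common Brownian motion, use Gronwall on~(\ref{c.ie}) together with Lemma~\ref{skd.ell.d} to obtain $\|X^*_N-X^*\|_T=O(|z_N-x|)$ pathwise (the paper notes that $B_N\in(z_N,x]$ forces $|B_N-x|\le|z_N-x|$ automatically, so no special coupling of initial points is needed), and then pass to the limit in Dynkin's formula~(\ref{r1.dyn}) for~(b). The entrance-behavior concern you raise in your final paragraph is not treated inside the paper's proof either; the paper simply asserts the estimates and defers the fact that $X^*(t)\in D^*$ for $t>0$ to the three-dimensional Bessel-process discussion immediately following the proposition.
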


\begin{proof}
We can consider a Polish space $\mathbb{R}^{\mathbb{N}}$ of
infinite sequences, and construct a probability measure $\mathbb{B}$
on $\mathbb{R}^{\mathbb{N}}$ satisfying
$\mathbb{B}(B_N\in db_N) = \lambda(x^*_N,db_N)$.
Then we sample $((B_N),\omega)$ from
$\mathbb{B}\otimes\mathbb{W}$,
and generate $X(t) = \Phi_t(x,\omega)$
and $X_N(t) = \Phi_t(B_N,\omega)$.
Let $T > 0$ be fixed.
By applying the Gronwall's inequality to (\ref{c.ie})
we find that
$|X_N-X|_T = O(|z_N-x|)$,
and by Lemma~\ref{skd.ell.d} that
\begin{math}
  |\tilde{\Theta}_{x,T}(X_N) - \tilde{\Theta}_{x,T}(X)|_T
  = O(|z_N-x|) .
\end{math}

(a)
We can generate
\begin{math}
  X^*(t) = \Xi^*_t((x,x),\tilde{\Theta}_{x,t}(X))
\end{math}
and
\begin{math}
  X^*_N(t) = \Xi^*_t(x^*_N,\tilde{\Theta}_{x,t}(X_N)) ,
\end{math}
and verify
(cf. the proof of Lemma~\ref{bounded.gamma})
that
$\|X^*_N - X^*\|_T = O(|z_N-x|)$.
This implies
(via argument of Section~8 of~\cite{billingsley})
that
$\mathbb{V}_{\lambda(x^*_N,\cdot)}$ converges weakly to
the distribution of
$(X^*(t),X(t))$, $0\le t< \infty$.

(b)
Since $X^*_N(t)$ is a solution to the SDE for $\mathcal{B}^*$,
it satisfies for each $N$
the Dynkin's formula
\begin{equation}\label{r1.dyn}
  \mathbf{E}_{\mathbb{B}\otimes\mathbb{W}}\left[
    f(X^*_N(t))
    - \int_0^t\mathcal{B}^*f(X^*_N(v))dv
  \right]
  - f(x^*_N)
  = 0 .
\end{equation}
By letting $N\to\infty$
we find that (\ref{r1.dyn}) holds for $X^*(t)$,
completing the proof.
\end{proof}

By Proposition~\ref{ent.prop}
an intertwining dual process $X^*(t)$
is started at $X^*(0) = (x,x)$,
and viewed as SDE solution associated with
$\mathcal{B}^*$.
We set
\begin{math}
  m = \left(
  \frac{\partial}{\partial y} - \frac{\partial}{\partial z}
  \right)\!h ,
\end{math}
and by Ito chain rule we obtain
\begin{equation}\label{bessel.sde}
  d(h(X^*(t)))
  = \frac{m(X^*(t))^2}{h(X^*(t))}dt
  - m(X^*(t))dW(t) .
\end{equation}
Then we can construct a process
\begin{math}
  R(t) = \displaystyle\int_0^t m(X^*(v))^2 dv
\end{math}
and the time change $\tau_t = \inf\{v\ge0: R(v)>t\}$.
By using the time scale $\tau_t$
we obtain a three-dimensional Bessel process
$H(t) = h(X^*(\tau_t))$ satisfying
\begin{math}
  dH(t) = \dfrac{dt}{H(t)} - dW(t) .
\end{math}
starting from $H(0) = 0$,
and it never hits $0$ for $t > 0$.
In this sense
boundary points are viewed as
entrance states of the $\Lambda$-linked coupling.
This connection to three-dimensional Bessel process
was a critical observation
in the investigation of Miclo~\cite{miclo}.

\subsection{Further examples of entrance states}
\label{e.link.r2}

In this section we look at Example~\ref{r2.ex.ligg}
and~\ref{b.model.ligg}
for further example of entrance states.

\begin{example}\label{r2.ex.ent}
In Example~\ref{r2.ex.ligg} we set a dual state space
\begin{math}
  D^*_+ = \{(u,z,y)\in D^*:0< u_1< u_2\}
\end{math}
so that the harmonic function
\begin{equation*}
  h(u,z,y) = \sqrt{2\pi}
  \int_{(u_2z_1-u_1z_2)/\sqrt{2u_1u_2}}^{(u_2y_1-u_1y_2)/\sqrt{2u_1u_2}}
  e^{\eta^2}d\eta
\end{equation*}
is finite and strictly positive
for every $(u,z,y)\in D^*_+$.
Here the Markov dynamical system
\begin{equation*}
  \Xi^*_t((u,z,y),\omega) = \big(
  \Phi^{-1}_t(u,0),
  \Phi^{-1}_t(z,\omega(t-\cdot)),
  \Phi^{-1}_t(y,\omega'(t-\cdot))
  \big)
\end{equation*}
maps from $\bar{D}^*_+\times C([0,t],\mathbb{R}^n)$
to $\bar{D}^*_+$.
\end{example}

In Example~\ref{r2.ex.ent}
we obtain the intertwining dual operator
\begin{align*}
  \mathcal{B}^*f
  &
  = \mathcal{B}f
  + \frac{1}{h}\left(
  \frac{\partial}{\partial y_1} - \frac{\partial}{\partial z_1}
  \right)\!h\times
  \left(
  \frac{\partial}{\partial y_1} - \frac{\partial}{\partial z_1}
  \right)\!f
  \\ & \hspace{7.5ex}
  + \frac{1}{h}\left(
  \frac{\partial}{\partial y_2} + \frac{\partial}{\partial z_2}
  \right)\!h\times
  \left(
  \frac{\partial}{\partial y_2} + \frac{\partial}{\partial z_2}
  \right)\!f
\end{align*}
In the next proposition
we choose an initial point
$X^*(0) = (u,z,y)$ satisfying
$\langle[u_2,-u_1]^T,y-z\rangle = 0$
with $0< u_1 < u_2$,
and sample $X(0)$
from a pdf $\bar{\lambda}(\partial y^*,\cdot)$
proportional to $\nu(x_1,x_2)$
on the surface
\begin{math}
  \partial y^* =
  \{x: \langle[u_2,-u_1]^T,x-y\rangle = 0\} .
\end{math}
The resulting bivariate process
$(X^*(t),X(t))$ generates a probability measure on
\begin{math}
  \bar{E} = \{((u,z,y),x):
  \langle[u_2,-u_1]^T,y-x\rangle \ge 0,\,
  \langle[u_2,-u_1]^T,x-z\rangle \ge 0,\,
  0< u_1< u_2
  \}
\end{math}

\begin{proposition}\label{ent.prop.r2}
Let $(z_N)$ be an infinite sequence in $\mathbb{R}^2$ converging
to $z$ while $x^*_N = (u,z_N,y)\in D^*_+$,
and let
$\mathbb{V}_{\lambda(x^*_N,\cdot)}$
be the $\Lambda$-linked coupling.
Then Proposition~\ref{ent.prop}(a)--(b) holds.
\end{proposition}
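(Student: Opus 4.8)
The plan is to run the proof of Proposition~\ref{ent.prop} almost verbatim, the one new ingredient being a coupling of the random initial states, since here the support of $\lambda(x^*_N,\cdot)$ collapses not to a point but to the line $\partial y^*$. As $\langle[u_2,-u_1]^T,y-z_N\rangle\to 0$ while $\langle[u_2,-u_1]^T,y-z\rangle=0$, the strip $\{x\in\mathbb{R}^2:\langle[u_2,-u_1]^T,y-x\rangle\ge 0,\ \langle[u_2,-u_1]^T,x-z_N\rangle>0\}$ carrying $\lambda(x^*_N,\cdot)$ shrinks onto $\partial y^*$, and since $\nu(x)=e^{-2x_1x_2}$ is continuous and locally bounded transverse to this thin strip, one checks that $\lambda(x^*_N,\cdot)$ converges weakly on $\mathbb{R}^2$ to $\bar\lambda(\partial y^*,\cdot)$. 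First I would use Skorokhod's representation to build a probability measure $\mathbb{B}$ on $(\mathbb{R}^2)^{\mathbb{N}}\times\mathbb{R}^2$ under which $B_N$ (the $N$th coordinate) has law $\lambda(x^*_N,\cdot)$ for every $N$, the last coordinate $X(0)$ has law $\bar\lambda(\partial y^*,\cdot)$, and $B_N\to X(0)$ almost surely; note $B_N\in y^*$ for all $N$ and $X(0)\in\partial y^*\subset y^*$. Sampling $((B_N),X(0),\omega)$ from $\mathbb{B}\otimes\mathbb{W}$, set $X(t)=\Phi_t(X(0),\omega)$ and $X_N(t)=\Phi_t(B_N,\omega)$. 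Fixing $T>0$, Gronwall's inequality applied to~(\ref{c.ie}) gives $|X_N-X|_T\to 0$ and then Lemma~\ref{skd.ell.d} gives $|\tilde\Theta_{y^*,T}(X_N)-\tilde\Theta_{y^*,T}(X)|_T\to 0$, both almost surely (at rate $O(|z_N-z|)$ if $\mathbb{B}$ is chosen to match the along-$\partial y^*$ marginals, though $o(1)$ is all that is needed).

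For part (a) I would form $X^*_N(t)=\Xi^*_t(x^*_N,\tilde\Theta_{y^*,t}(X_N))$ with the Markov dynamical system of Example~\ref{r2.ex.ligg}, and $X^*(t)=\Xi^*_t((u,z,y),\tilde\Theta_{y^*,t}(X))$ with the degenerate system $\Xi^*_t((u,z,y),\omega)=(\Phi^{-1}_t(u,0),\Phi^{-1}_t(z,\omega(t-\cdot)),\Phi^{-1}_t(y,\omega'(t-\cdot)))$ of Example~\ref{r2.ex.ent}, which is defined on all of $\bar D^*_+$ and so accepts the boundary point $(u,z,y)$. Arguing as in the proof of Lemma~\ref{bounded.gamma} --- Gronwall's inequality for the flow $\Phi^{-1}_t$, fed by the two displays just obtained --- one gets $\|X^*_N-X^*\|_T\to 0$ almost surely. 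Since $(X^*_N,X_N)$ has law $\mathbb{V}_{\lambda(x^*_N,\cdot)}$ and $(X^*,X)$ is precisely the bivariate process of the statement, almost sure uniform convergence of these continuous processes on every compact interval yields, via Section~8 of~\cite{billingsley}, that $\mathbb{V}_{\lambda(x^*_N,\cdot)}$ converges weakly to the law of $(X^*(t),X(t))$, $0\le t<\infty$.

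For part (b), because $x^*_N\in D^*_+$, Theorem~\ref{link.thm} applied to $\mathbb{V}_{\lambda(x^*_N,\cdot)}$ shows $X^*_N(t)$ is Markovian with semigroup $Q^*_t$, hence a solution of the martingale problem for the operator $\mathcal{B}^*$ of Example~\ref{r2.ex.ent}, so it satisfies Dynkin's formula
\begin{equation*}
  \mathbf{E}_{\mathbb{B}\otimes\mathbb{W}}\!\left[f(X^*_N(t))-\int_0^t\mathcal{B}^*f(X^*_N(v))\,dv\right]-f(x^*_N)=0
\end{equation*}
for every $N$ and every $f\in\mathcal{D}_{\mathcal{B}^*}$. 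Exactly as in Proposition~\ref{ent.prop}(b) I would let $N\to\infty$: the almost sure convergence $X^*_N\to X^*$, together with an error-function analogue of~(\ref{bessel.sde}) showing that, after a time change, $h(X^*(t))$ is a three-dimensional Bessel process --- so $X^*(t)$ leaves $\partial D^*_+$ at once and never returns for $t>0$, making $v\mapsto\mathcal{B}^*f(X^*(v))$ integrable on $[0,t]$ despite the blow-up of the coefficients of $\mathcal{B}^*$ on $\partial D^*_+$ --- lets the Dynkin identity pass to the limit, exhibiting $X^*(t)$ as the diffusion associated with $\mathcal{B}^*$.

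The main obstacle is the coupling step, and inside it the weak convergence $\lambda(x^*_N,\cdot)\Rightarrow\bar\lambda(\partial y^*,\cdot)$: unlike Proposition~\ref{ent.prop}, where $\lambda(x^*_N,\cdot)$ concentrates at a single point and $B_N\to x$ for free, here the limiting measure lives on a line, so one must genuinely verify that the conditional law along $\partial y^*$ converges, using the continuity and local boundedness of $\nu$ transverse to the collapsing strip (this is also where integrability of $\nu$ on $\partial y^*$, guaranteed by $0<u_1<u_2$, enters). A secondary delicate point, inherited verbatim from Proposition~\ref{ent.prop}(b), is the uniform integrability needed to pass to the limit in Dynkin's formula near $t=0$.
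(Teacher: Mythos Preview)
Your overall strategy --- couple the initial laws, propagate almost sure convergence through $\Phi_t$, $\tilde\Theta_{y^*,t}$ and $\Xi^*_t$ via Gronwall-type bounds, then pass Dynkin's formula to the limit --- matches the paper's. The paper does not cite Skorokhod's theorem by name; instead it builds the coupling $\mathbb{B}$ directly and records convergence quantitatively via sets $A_k=\{\|B_N-B_0\|\le c_{k,N},\ N\ge1\}$ of probability $\ge 1-2^{-k}$, which is functionally the same device as your almost sure convergence but gives uniform rates on each $A_k$.

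The one place your route diverges is part~(b). You invoke the Bessel structure of $h(X^*(t))$ to control the blow-up of the drift of $\mathcal{B}^*$ near $\partial D^*_+$ and justify passing the integral to the limit. The paper sidesteps this entirely: it restricts to test functions with $f$ and $\mathcal{B}^*f$ both bounded, and then for each $\varepsilon>0$ chooses $k$ so large that the Dynkin identity holds up to $\varepsilon$ when truncated by $I_{A_k}$, after which $N\to\infty$ is dominated convergence. This is cleaner and avoids the near-circularity in your argument (the Bessel SDE~(\ref{bessel.sde}) for $X^*$ is derived in the paper \emph{after} Proposition~\ref{ent.prop.r2}, using its conclusion). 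Your version can be repaired --- run the Bessel argument on $X^*_N$, which is already known to solve the $\mathcal{B}^*$-SDE, and take limits --- but the paper's boundedness hypothesis on the test functions is the more direct way to close the argument.
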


\begin{proof}
We can consider a Polish space of infinite sequences
$(b_0,(b_N))$ with $b_0\in\partial y^*$ and $b_N\in\mathbb{R}^2$,
and construct a probability measure $\mathbb{B}$ satisfying
\begin{math}
  \mathbb{B}(B_0\in db_0) = \bar{\lambda}(\partial y^*,db_0)
\end{math}
and
\begin{math}
  \mathbb{B}(B_N\in db_N) = \lambda(x^*_N,db_N);
\end{math}
moreover, for each $k$ we can find
a sequence $c_{k,N}$ converging to zero
such that
\begin{math}
  \mathbb{B}\left(
  \|B_N-B_0\|\le c_{k,N},N\ge 1
  \right)\ge 1 - 2^{-k} .
\end{math}
Then we sample $((B_0,(B_N)),\omega)$ from
$\mathbb{B}\otimes\mathbb{W}$,
and generate
$X(t) = \Phi_t(B_0,\omega)$
and $X_N(t) = \Phi_t(B_N,\omega)$.
Having fixed $T> 0$ and
\begin{math}
  A_k = \{(b_0,(b_N)):
  \|b_N-b_0\|\le c_{k,N},\, N\ge 1\} ,
\end{math}
and similarly to the proof of Proposition~\ref{ent.prop}
we obtain
$\|X_N-X\|_T = O(c_{k,N})$,
and
\begin{math}
  \|\tilde{\Theta}_{(u,y),T}(X_N) - \tilde{\Theta}_{(u,y),T}(X)\|_T
  = O(c_{k,N})
\end{math}
if $(B_0,(B_N))\in A_k$.

(a)
We can verify
$\|X^*_N - X^*\|_T = O(c_{k,N})$ over $A_k$
for the construction of
\begin{math}
  X^*(t) = \Xi^*_t((u,z,y),\tilde{\Theta}_{(u,y),t}(X))
\end{math}
and
\begin{math}
  X^*_N(t) = \Xi^*_t(x^*_N,\tilde{\Theta}_{(u,y),t}(X_N)) ,
\end{math}
and obtain Proposition~\ref{ent.prop}(a).

(b)
Assuming that $f$ and $\mathcal{B}^*f$ are bounded,
for each $\varepsilon> 0$ we can find sufficiently large $k$ so that
\begin{equation*}
  \left|\mathbf{E}_{\mathbb{B}\otimes\mathbb{W}}\left[
    \left(f(X^*_N(t))
    - \int_0^t\mathcal{B}^*f(X^*_N(v))dv
    \right) I_{A_k}(B_0,(B_N))
    \right]
  - f(x^*_N)
  \right| < \varepsilon
\end{equation*}
By letting $N\to\infty$, we can verify
Proposition~\ref{ent.prop}(b).
\end{proof}

By setting
\begin{equation*}
  m = \sqrt{
    \left[\left(
      \frac{\partial}{\partial y_1} - \frac{\partial}{\partial z_1}
      \right)\!h\right]^2
    + \left[\left(
      \frac{\partial}{\partial y_2} + \frac{\partial}{\partial z_2}
      \right)\!h\right]^2
  }
\end{equation*}
we can obtain (\ref{bessel.sde}) for an intertwining dual process
$X^*(t)$ of Proposition~\ref{ent.prop.r2},
and show that $h(X^*(t))$ never hits $0$ for $t > 0$,
as discussed in Section~\ref{e.link}.

\begin{example}\label{b.model.ent}
We continue to choose $\theta=\pi/2$ from
Example~\ref{b.model.ligg}.
Then we can introduce a harmonic function
$h(z,y) = \langle d,y-z\rangle$ for the Liggett dual $\mathcal{B}$,
and obtain the intertwining dual
\begin{equation*}
  \mathcal{B}^*f
  = \mathcal{B}f
  + \frac{2d_1}{h(z,y)}\left(
  \frac{\partial}{\partial y_1} - \frac{\partial}{\partial z_1}
  \right)\!f .
\end{equation*}
Similarly to Proposition~\ref{ent.prop.r2} we can set an initial
point $X^*(0) = (z,y)$ on the boundary
(i.e., $\langle d,y-z\rangle = 0$),
and sample $X(0)\in(H+y)$ randomly
so that
$X(0)-\langle d,X(0)\rangle d$ is distributed as
the pdf $\nu_H(x)$ proportional to $\nu(x)$ on $H$.
The resulting probability measure $\mathbb{V}_{(z,y)}$
of bivariate process $(X^*(t),X(t))$ is the limiting
distribution of the
$\Lambda$-linked coupling $\mathbb{V}_{\lambda(x^*_N,\cdot)}$
if a sequence $x^*_N = (z_N,y)\in D^*$ converges to $(z,y)$,
for which Proposition~\ref{ent.prop}(a)--(b) is similarly
verified.
We can also obtain (\ref{bessel.sde}) with $m(z,y)\equiv 2d_1$.
\end{example}

\begin{remark}\label{b.model.rem}
(a)
As in the discussion of Example~\ref{b.model.ligg},
a hyperplane $H_1$ not parallel to $H$ can be used for an initial
state.
Proposition~\ref{s.ligg.cons} is applicable,
but a $\Lambda$-linked dynamical system $\Psi^*_t$
becomes intractable with the
choice of $\mathbb{F}_2 = \mathbb{F}_0\cup\{\varnothing\}$.
(b)
In Example~\ref{b.model} the input vectors
$a^{(1)},\ldots,a^{(N)}$ can span the entire space $\mathbb{R}^n$,
and $\nu(x)$ can be viewed as
a posterior density on $\mathbb{R}^n$
up to the normalizing constant
$z_{\nu} = \int_{\mathbb{R}^n}\nu(x)dx$;
the finiteness of $z_{\nu}$
can be checked similarly to (\ref{nu.finite}).
However, it is no longer guaranteed that
a sample path $Y^*(t) = \Phi^{-1}_t(Y^*(0),\omega(t-\cdot))$
remains in a subclass $\mathbb{F}_1$
of hypographic closed sets.
\end{remark}

In spite of Remark~\ref{b.model.rem}
we conclude this study by suggesting
a scheme for Monte Carlo simulation.
In light of (\ref{vee.cond}) it can be designed to sample
$X(T)\in R$ from the pdf proportional to $\nu(x)$
for some fixed region $R$ of $\mathbb{R}^n$.
Set an initial state $\partial Z^*(0) = \partial Y^*(0) = H_1$,
and sample $X(0)\in H_1$ from the pdf proportional to $\nu(x)$.
Construct a sample path $(X^*(t),X(t))$ of $\Lambda$-linked coupling,
and stop it at time $T$ when $R\subseteq Y^*(T)\setminus Z^*(T)$.
Accept a sample $X(T)$ from the pdf of interest if $X(T)\in R$.

\bibliographystyle{plain}
\bibliography{intertwin}

\begin{thebibliography}{10}

\bibitem{aubin90}
Jean-Pierre Aubin and H\'{e}l\`ene Frankowska.
\newblock {\em Set-valued analysis}, volume~2 of {\em Systems \& Control:
  Foundations \& Applications}.
\newblock Birkh\"{a}user Boston, Inc., Boston, MA, 1990.

\bibitem{billingsley}
Patrick Billingsley.
\newblock {\em Convergence of probability measures}.
\newblock John Wiley \& Sons, Inc., New York-London-Sydney, 1968.

\bibitem{DF}
Persi Diaconis and James~Allen Fill.
\newblock Strong stationary times via a new form of duality.
\newblock {\em Ann. Probab.}, 18(4):1483--1522, 1990.

\bibitem{FL}
James~Allen Fill and Vince Lyzinski.
\newblock Strong stationary duality for diffusion processes.
\newblock {\em J. Theoret. Probab.}, 29(4):1298--1338, 2016.

\bibitem{friedman1}
Avner Friedman.
\newblock {\em Stochastic differential equations and applications. {V}ol. 1}.
\newblock Academic Press [Harcourt Brace Jovanovich, Publishers], New
  York-London, 1975.
\newblock Probability and Mathematical Statistics, Vol. 28.

\bibitem{iw}
Nobuyuki Ikeda and Shinzo Watanabe.
\newblock {\em Stochastic differential equations and diffusion processes},
  volume~24 of {\em North-Holland Mathematical Library}.
\newblock North-Holland Publishing Co., Amsterdam-New York; Kodansha, Ltd.,
  Tokyo, 1981.

\bibitem{ito53}
Seiz{\^o} It{\^o}.
\newblock The fundamental solution of the parabolic equation in a
  differentiable manifold.
\newblock {\em Osaka Math. J.}, 5:75--92, 1953.

\bibitem{kent}
John Kent.
\newblock Time-reversible diffusions.
\newblock {\em Adv. in Appl. Probab.}, 10(4):819--835, 1978.

\bibitem{kloeden}
Peter~E. Kloeden and Eckhard Platen.
\newblock {\em Numerical solution of stochastic differential equations}.
\newblock Springer-Verlag, Berlin, 1992.

\bibitem{liggett}
Thomas~M. Liggett.
\newblock {\em Interacting particle systems}, volume 276 of {\em Grundlehren
  der Mathematischen Wissenschaften [Fundamental Principles of Mathematical
  Sciences]}.
\newblock Springer-Verlag, New York, 1985.

\bibitem{matsumoto1}
Hiroyuki Matsumoto and Marc Yor.
\newblock An analogue of {P}itman's {$2M-X$} theorem for exponential {W}iener
  functionals. {I}. {A} time-inversion approach.
\newblock {\em Nagoya Math. J.}, 159:125--166, 2000.

\bibitem{miclo}
Laurent Miclo.
\newblock Strong stationary times for one-dimensional diffusions.
\newblock {\em Ann. Inst. Henri Poincar\'e Probab. Stat.}, 53(2):957--996,
  2017.

\bibitem{molchanov}
Ilya Molchanov.
\newblock {\em Theory of random sets}.
\newblock Probability and its Applications (New York). Springer-Verlag London,
  Ltd., London, 2005.

\bibitem{pal}
S.~{Pal} and M.~{Shkolnikov}.
\newblock {Intertwining diffusions and wave equations}.
\newblock {\em ArXiv e-prints}, June 2013.

\bibitem{pitman}
J.~W. Pitman.
\newblock One-dimensional {B}rownian motion and the three-dimensional {B}essel
  process.
\newblock {\em Advances in Appl. Probability}, 7(3):511--526, 1975.

\bibitem{PR}
L.~C.~G. Rogers and J.~W. Pitman.
\newblock Markov functions.
\newblock {\em Ann. Probab.}, 9(4):573--582, 1981.

\bibitem{rw1}
L.~C.~G. Rogers and David Williams.
\newblock {\em Diffusions, {M}arkov processes, and martingales. {V}ol. 1}.
\newblock Wiley Series in Probability and Mathematical Statistics: Probability
  and Mathematical Statistics. John Wiley \& Sons, Ltd., Chichester, second
  edition, 1994.
\newblock Foundations.

\bibitem{rw2}
L.~C.~G. Rogers and David Williams.
\newblock {\em Diffusions, {M}arkov processes, and martingales. {V}ol. 2}.
\newblock Cambridge Mathematical Library. Cambridge University Press,
  Cambridge, 2000.
\newblock It\^o calculus, Reprint of the second (1994) edition.

\bibitem{saisho}
Yasumasa Saisho and Hideki Tanemura.
\newblock Pitman type theorem for one-dimensional diffusion processes.
\newblock {\em Tokyo J. Math.}, 13(2):429--440, 1990.

\bibitem{siegmund}
D.~Siegmund.
\newblock The equivalence of absorbing and reflecting barrier problems for
  stochastically monotone {M}arkov processes.
\newblock {\em Ann. Probability}, 4(6):914--924, 1976.

\bibitem{skorokhod}
A.~V. Skorokhod.
\newblock {\em Asymptotic methods in the theory of stochastic differential
  equations}, volume~78 of {\em Translations of Mathematical Monographs}.
\newblock American Mathematical Society, Providence, RI, 1989.
\newblock Translated from the Russian by H. H. McFaden.

\end{thebibliography}

\end{document}